\documentclass[a4paper,11pt,twoside,reqno]{amsart}

\usepackage[utf8]{inputenc}
\usepackage[plainpages=false,pdfpagelabels=true]{hyperref}
\usepackage{amssymb,amsthm}
\usepackage[margin=1in]{geometry}
\usepackage{slashed}
\usepackage{caption}
\usepackage[all]{xy}
\usepackage{graphicx}
\usepackage{comment}
\newcommand{\tp}{^{\mathrm{t}}}
\newcommand{\bmat}{\left(\begin{smallmatrix}}
\newcommand{\emat}{\end{smallmatrix}\right)}

\newtheorem{Satz}{Theorem}[section]

\newtheorem{Lem}[Satz]{Lemma}

\newtheorem{Thm}[Satz]{Theorem}
\newtheorem{Cor}[Satz]{Corollary}
\theoremstyle{definition}

\newtheorem{Bem}[Satz]{Remark}

\newcommand{\tr}{\operatorname{Tr}}

\newcommand{\SO}{\mathrm{SO}}

\newcommand{\ag}{\mathfrak{g}}
\newcommand{\ah}{\mathfrak{h}}
\newcommand{\ak}{\mathfrak{k}}

\newcommand{\abs}[1]{\vert #1\vert}

\newcommand{\nor}{\mathrm{nor}}

\newcommand{\Z}{\mathbb{Z}}
\allowdisplaybreaks[1]

\renewcommand{\epsilon}{\varepsilon}

\newcommand{\R}{\ensuremath{\mathbb{R}}}

\newcommand{\s}{\ensuremath{\mathbb{S}}}
\numberwithin{equation}{section}

\usepackage{color}

\newtheorem{innercustomgeneric}{\customgenericname}
\providecommand{\customgenericname}{}
\newcommand{\newcustomtheorem}[2]{%
  \newenvironment{#1}[1]
  {%
   \renewcommand\customgenericname{#2}%
   \renewcommand\theinnercustomgeneric{##1}%
   \innercustomgeneric
  }
  {\endinnercustomgeneric}
}

\newcustomtheorem{customthm}{Theorem}
\newcustomtheorem{customcor}{Corollary}

\title{Construction of harmonic maps between cohomogeneity one manifolds}

\author{Anna Siffert}
\address{Universität M\"unster, Mathematisches Institut\\
Einsteinstr. 62\\
48149 M\" unster\\
Germany}
\email{asiffert@uni-muenster.de}

\subjclass[2010]{58E20; 53C43}
\keywords{harmonic map; initial value problem}
\thanks{Anna Siffert thanks the German Research Foundation (DFG) for funding - Project-ID 427320536 - SFB 1442.
This project is part of Anna Siffert's project on the construction of harmonic maps between cohomogeneity one manifolds 
from the SFB-funding period
2020-2024.
}

\begin{document}
\begin{abstract}
We construct equivariant harmonic maps between cohomogeneity one manifolds.
\end{abstract} 
\maketitle

\section{Introduction}
The study of harmonic maps between Riemannian manifolds and their generalizations constitutes a vibrant research area within the field of geometric analysis, as evidenced by recent publications \cite{MR4927650, BS1, S26} and the relevant references therein.
See in addition to the fantastic book \cite{MR2044031} for more background on harmonic maps.

\smallskip

Recall that a smooth map $\varphi:(M,g)\rightarrow (N,h)$ between Riemannian manifolds is called harmonic if it is a critical point of the energy functional. 
In now we explain the definition of harmonic maps in more detail, where we closely follow \cite{MR2389639, S26} and sometimes copy sentences almost word for word.
In what follows we denote by $\Omega\subset M$ a domain with piecewise $C^1$-boundary $\partial\Omega$.
The energy integral of the map $\varphi$ over the domain $\Omega$ is defined as
\begin{align}
\label{energy}
    E_{\Omega}(\varphi)=\frac{1}{2}\int_{\Omega}\lvert d\varphi\rvert^2 v_g,
\end{align}
where $v_g$ denotes the volume form of $M$ with respect to the metric $g$.
Consider a one-parameter variation of $\varphi$, i.e.
let $\Phi:M\times (-\epsilon,\epsilon)\rightarrow N, (x,t)\mapsto \Phi(x,t)=:\varphi_t(x)$, $\epsilon\in\mathbb{R}$ with $\epsilon>0$, be a smooth map with $\Phi(x,0)=\varphi(x)$. 
The map $\Phi$ is called supported in $\Omega$, if $\varphi_t=\varphi$ on $M\setminus\Omega^{\circ}$, where $\Omega^{\circ}$ denotes the interior of $\Omega$.
A smooth map $\varphi:(M,g)\rightarrow (N,h)$ is called harmonic map if it is critical point of (\ref{energy}), this means that for all compact domains $\Omega$ and all smooth one-parameter variations $\Phi$ of $\varphi$ supported in $\Omega$, we have
\begin{align*}
    \frac{d}{dt}E_{\Omega}(\varphi_t)_{\lvert t=0}=0.
\end{align*}
In other words the first variation of the energy functional vanishes.

\smallskip

One verifies by straightforward computations that the first variation of the energy functional is given by
\begin{align}
\label{fv}
    \frac{d}{dt}E_{\Omega}(\varphi_t)_{\lvert t=0}=-\int_{M}\langle \tau(\varphi),\frac{\partial\varphi_t}{\partial t}_{\lvert t=0}\rangle v_g,
\end{align}
where $\langle\,\cdot\,,\,\cdot\,\rangle$ denotes the inner product on $\varphi^{-1}TN$ which is induced from $h$.
Further, the tension field of $\varphi$, denoted by $\tau(\varphi)$, is defined by
\begin{align*}
 \tau(\varphi)=\sum_{i=1}^{\dim M}({}^{\varphi}\nabla_{e_i}(d\varphi\, e_i)-d\varphi(\nabla_{e_i}e_i)),   
\end{align*}
where $\nabla$ denotes the Levi-Civita connection of $(M,g)$ and $(e_i)_{i=1}^{\dim M}$ is a local orthonormal frame on $M$.
Furthermore, we denote by ${}^{\varphi}\nabla$ the pull-back of the Levi-Civita connection of $(N,h)$ to the bundle $\varphi^{-1}TN$.

\smallskip

From (\ref{fv}) we obtain that the map $\varphi$ is harmonic if and only if 
\begin{align}
\label{tension}
    \tau(\varphi)=0.
\end{align}
In local coordinates the identity (\ref{tension}) is 
a semi-linear elliptic system of partial differential equations of second order.
Since no general solution theory exists for these types of differential equation systems, solving the equation (\ref{tension}) is usually difficult.
One widely used tactic for constructing solutions is the imposition of symmetry conditions. These can simplify the original problem, and in some cases make it (partially) solvable. 
Note however, that it is often difficult to find the appropriate symmetry conditions that allow for reducing the original problem to one that is (partially) solvable.

\smallskip

It has proven to be a good choice for the symmetry conditions to study equivariant maps between cohomogeneity one manifolds with respect to harmonicity. The investigation of equivariant harmonic maps between cohomogeneity one manifolds, which was started by Urakawa \cite{MR1214054}, has been significantly broadened by P\"uttmann and Siffert in \cite{MR4000241}.
With this Ansatz, harmonic map examples between cohomogeneity one manifolds have been provided for certain choices of these cohomogeneity one manifolds, see e.g. \cite{MR1436833, MR4000241, MR3427685, MR3745872, MR1214054}.

\smallskip

Recall that if $M$ supports a smooth isometric action $G\times M\rightarrow M$ of a compact Lie group $G$ such that the orbit space $M/G$ is one-dimensional, then $M$ is called a cohomogeneity one manifold. There are two types of orbits: regular and non-regular.
Regular orbits (known as principal orbits as well) have a codimension of one, and their collective union forms an open dense set $M_0$ within $M$. Orbits of singular type are non-regular orbits whose codimension is strictly greater than one.
Boundary points of $M/G$ are associated with non-regular orbits. In particular, the non-regular orbits correspond to the fibers of the boundary points of \(M/G\) via the natural projection map \(\pi: M\rightarrow M/G\). Recall further that equivariant maps are structure preserving maps between cohomogeneity one manifolds, i.e. they map orbits to orbits.

\smallskip

For equivariant maps $\varphi$ between cohomogeneity one manifolds, equation (\ref{tension}) simplifies to a singular boundary value problem (along with potentially an algebraic condition). 
Specifically, one acquires a second-order ordinary differential equation of the form \begin{align} \label{ode0} \ddot r(t)+h_1(t)\dot r(t)+h_2(t,r(t))=0, \end{align} with $r:(M/G)^{\circ}\rightarrow\mathbb{R}$ being a smooth function. 
Moreover, $h_1:(M/G)^{\circ}\rightarrow\mathbb{R},\, h_2:(M/G)^{\circ}\times\mathbb{R}\rightarrow\mathbb{R}$ are smooth functions and $(M/G)^{\circ}$ denotes the interior of $M/G$.
For $\varphi$ to be harmonic, a solution $r$ of (\ref{ode0}) must extend smoothly to $M/G$ and meet specific boundary conditions at the boundary points of $M/G$.
At the boundary points of $M/G$ that correspond to singular orbits, the differential equation (\ref{ode0}) is singular. At points of $M/G$ where the fibres under the natural projection $\pi:M\rightarrow M/G$ are not singular orbits, the differential equation (\ref{ode0}) is regular.
In \cite{S26}, the initial value problem at a singular boundary point was recently solved: It was demonstrated that if \(0 \in M/G\) is a boundary point, then for every \(v\), there exists a unique smooth solution \(r:(M/G)^{\circ}\cup\{0\}\rightarrow\mathbb{R}\) of (\ref{ode0}) with \(r(0)=0\) and \(\dot r(0)=v\), which depends continuously on \(v\). 

\smallskip

The concept of generating harmonic maps by examining deformations of metrics has been explored in numerous manuscripts, including \cite{MR716320,MR4927650, MR1122903,MR653945,MR987757} and the references cited within.
One of the most prominent problems in this area is the rendering problem, i.e. given smooth, compact Riemannian manifolds $(M,g)$ and $(N,h)$, and a homotopy class $\mathcal{H}$ of smooth maps $M\rightarrow N$, does there exist a harmonic representative in $\mathcal{H}$? This problem has been fully solved by Eells and Ferreira in \cite{MR1122903} for $\dim M\geq 3$. Namely these authors proved that if $\dim M\geq 3$, then there exists a smooth metric $\tilde{g}$ conformally equivalent to $g$, and a map $\psi\in\mathcal{H}$ such that $\psi:(M,\tilde{g})\rightarrow (N,h)$ is harmonic.

\smallskip

In the first part of this manuscript, we deal with the construction of harmonic self-maps of spheres. The aforementioned result by Eeles and Ferreira \cite{MR1122903} implies that we can always deform the domain metric in such a way that a harmonic self-map of $\s^n$, $n\geq 3$, exists.
It is however often difficult to prove the existence of harmonic maps between given manifolds if one does not allow deformations of the metrics. In the first part of this manuscript we establish the existence of several harmonic self-maps of round spheres. Namely we prove:

\begin{Thm}
\label{thm11}
For $n\in\{3,4,5,7\}$, there exist two harmonic self-maps of  $\s^n$.
\end{Thm}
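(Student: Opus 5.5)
The plan is to use the equivariant reduction described above, with the cohomogeneity one structures on $\s^n$ coming from isoparametric hypersurfaces. For $n\in\{3,4,5,7\}$ I would take actions with two singular orbits: for example $\SO(2)\times\SO(2)$ on $\s^3$, $\SO(3)$ acting via the Cartan isoparametric family on $\s^4$, and $\SO(2)\times\SO(4)$ or $\SO(3)\times\SO(3)$ on $\s^5$ and $\s^7$. These are the cases where the multiplicities allow the map $r\mapsto$ (orbit) to be equivariant in a nontrivial way. Following P\"uttmann--Siffert \cite{MR4000241}, an equivariant map $\varphi_r$ is determined by a function $r:[0,L]\to\R$, where $L=\pi/g$ and $g$ is the number of principal curvatures. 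Then $\tau(\varphi_r)=0$ reduces to an ODE of the form (\ref{ode0}):
\begin{align*}
\ddot r+h_1(t)\dot r+h_2(t,r)=0,
\end{align*}
with $h_1$ given by the mean curvature of the principal orbits and $h_2$ a combination of terms such as $\sin(2k r)/\sin^2(\cdot t)$. The boundary conditions are $r(0)=0$ and $r(L)=m\pi/g$ for an integer $m$, which fixes the degree or homotopy class. The identity corresponds to $r(t)=t$, and the two maps to be produced should be distinct from it and from each other, for instance by having different boundary values $m$ or different equivariance degree $k$.

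Next I would invoke the result of \cite{S26}. For each initial slope $v$ there is a unique smooth solution $r_v$ on $[0,L)$ with $r_v(0)=0$, $\dot r_v(0)=v$, depending continuously on $v$. I would then run a shooting argument. The key observation is a symmetry of the equation under $t\mapsto L-t$, $r\mapsto m\pi/g-r$. Because of it, it suffices to find $v$ with $r_v(L/2)=m\pi/(2g)$, or with $\dot r_v(L/2)=0$ in the symmetric situation. Such a solution extends by reflection to a solution on all of $[0,L]$ satisfying the right boundary condition, and smoothness at the second singular orbit follows from smoothness at the first via the reflection.

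To find $v$ I would use an intermediate value argument on the continuous function $v\mapsto r_v(L/2)-m\pi/(2g)$. For small $v$, linearization near $r\equiv0$ shows $r_v$ stays below the target. For large $v$, I would rescale $s=vt$ near $t=0$ to obtain a limiting autonomous equation whose solution converges to an equilibrium value. With the right choice of $k$, this equilibrium lies above the midpoint value, or the solution oscillates across it. Counting how many times $r_v$ crosses the midpoint as $v$ ranges over $(0,\infty)$ should yield two distinct parameters $v_1<v_2$, and hence two harmonic maps. Alternatively, the two maps can come from two different boundary values $m$. The restriction to $n\in\{3,4,5,7\}$ should enter exactly here: the oscillation or crossing behaviour of the linearized equation at the equilibrium requires the relevant multiplicity $d$ to satisfy something like $d\le 5$, which is the classical Bizo\'n--Chmaj/Smith type dimension condition.

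The main obstacle will be this large-$v$ analysis. One has to control the rescaled solution uniformly on $[0,L/2]$, show it approaches the equilibrium of the autonomous limit, and show it actually crosses the midpoint level. The crossing comes from the complex eigenvalues of the linearization at that equilibrium, which is where the dimension bound is used. I would first try a Lyapunov/energy function $E=\tfrac12\dot r^2+V(t,r)$ that is monotone along solutions. This would prevent escape, and combined with the oscillation count it would give the two zeros of the shooting function.
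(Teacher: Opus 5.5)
\paragraph{The gap.} The gap is in the step you flag as the main obstacle, and it is structural, not technical. In the setting that actually produces $n\in\{3,4,5,7\}$, the mechanism you describe does not exist. These dimensions arise as $n=g+1$ from the $(g,1)$-actions with $g\in\{2,3,4,6\}$. Your $\SO(2)\times\SO(2)$ on $\s^3$ and Cartan's $\SO(3)$ on $\s^4$ are two of them. On $\s^5$ and $\s^7$ one needs the $(4,1)$-action of $\SO(2)\times\SO(3)$ on $\R^2\otimes\R^3$ and the $(6,1)$-action of $\SO(4)$, not the groups you list. So the restriction on $n$ comes from the admissible values of $g$, not from a dimension bound.

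For these actions both singular orbits have codimension $2$. Rescaling $s=vt$ and letting $v\to\infty$ turns the $(g,1)$-ODE into $\ddot r+\tfrac1s\dot r-\tfrac{1}{2s^2}\sin 2r=0$. In $\tau=\log s$ this is the undamped pendulum $r_{\tau\tau}=\tfrac12\sin 2r$. Its solution leaving $r=0$ is the heteroclinic $2\arctan e^{\tau}$ into the saddle $r=\pi$. The equilibrium $\pi/2$ is a centre with eigenvalues $\pm i$ and is never approached. Hence there is no convergence to an equilibrium, no spiralling, and no crossing count. This is exactly why the Bizo\'n--Chmaj condition reads $2\le m\le 5$ and excludes $m=1$; see Remark~\ref{bem1}.

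You could instead retreat to actions where the oscillation does work: $\SO(n)$ on $\s^n$ for $3\le n\le 6$, or $(2,m_0,m_1)$-actions with $m_0\ge 2$ (the latter are needed anyway for $\s^7$). But then you only reproduce the maps of \cite{MR1436833,MR3427685}, which this theorem is explicitly meant to go beyond.

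\paragraph{What the paper does instead, and smaller errors.} The paper avoids any large-slope analysis. It passes to $x=\log\tan\tfrac{gt}{2}$ and shoots from the midpoint. It fixes $r(0)=y_j=(jg+1)\tfrac{\pi}{2g}$ for $j=1$ and $j=-2$, so every solution is point symmetric about $(0,y_j)$ by Lemma~\ref{sym}, and it varies only the slope $v=r'(0)$. For $m=1$ one has
$W'=-\tfrac{1}{2g}\bigl(h_1'\sin^2r+h_2'\sin^2(r-\tfrac{3\pi}{4})\bigr)$.
This has a sign (for $g=6$, on $x<0$) by Lemmas~\ref{h1} and~\ref{h2}, and it is bounded by $C/\cosh x$. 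Consequently:
\begin{itemize}
\item $L(v)=W_v(-\infty)$ exists and is continuous in $v$;
\item $L(0)<0$ for $j=1$, and for $j=-2$ a small negative $v$ gives $L<0$ via explicit integral estimates;
\item $L(v)>0$ for $\abs{v}$ large;
\item zeros of $L$ are the slopes whose solutions reach the singular orbit.
\end{itemize}
This yields two zeros for each $j$, hence four solutions with distinct midpoint data. The linear solutions $r=t$ and $r=-(g-1)t$ (up to shifts by $\pi$) account for at most two of them, so at least two are new. Your proposal never addresses how to exclude that the parameters it finds are just these linear solutions.

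Two smaller points are also wrong:
\begin{itemize}
\item The reflection symmetry of the equation is the point reflection about $(L/2,y_j)$, and it exists only when $m_0=m_1$. The alternative condition $\dot r_v(L/2)=0$ therefore gives nothing.
\item For $g\ge 3$, $r\equiv 0$ is not a solution, so small $v$ is not a perturbation of it.
\end{itemize}
Your shooting from $t=0$ with target $r_v(L/2)=y_j$ is legitimate in itself. It would, however, need a genuinely different large-$\abs{v}$ argument from the one you propose.
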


Note that the statement of this theorem is not new, as harmonic self-maps have already been constructed between the spheres listed in Theorem\,\ref{thm11}, see \cite{MR1436833, MR3427685}.  In order not to overload the introduction, we have decided not to formulate this theorem more precisely at this point, but instead to explain in detail in Section\,\ref{sec-sph} why this existence result is of interest.

\smallskip

In the second part of this manuscript we use deformations of metrics to construct harmonic maps of non-compact cohomogeneity one manifolds.
 Drawing inspiration from the rendering problem, we adapt concepts from Ratto's work \cite{MR987757} and establish the existence of harmonic maps between non-compact cohomogeneity one manifolds
\begin{align*}
 M=\mathbb{R}^{k_0+1}\times G_1/H_1\times\dots\times G_m/H_m,   
\end{align*}
where $k_0\in\mathbb{N}$ with $k_0\geq 2$ and each $G_i/H_i$ is a compact, irreducible homogeneous space of dimension $k_i$ that is not flat.
The principal orbit is 
\begin{align*}
 \s^{k_0}\times G_1/H_1\times\dots\times G_m/H_m,   
\end{align*}
the singular orbit is 
\begin{align*}
 N=\{0\}\times G_1/H_1\times\dots\times G_m/H_m.   
\end{align*}
We show the following theorem:

\begin{Thm}
\label{B}
Let $(M,g)$ be a non-compact cohomogeneity one manifold as described in (\ref{mfd-nc}, \ref{metric}).
Then, the metric $g$ can be deformed to yield a metric $\tilde{g}$ for which there is a non-trivial (i.e. not a constant map and not the identity map) equivariant harmonic map $(M,\tilde{g})\rightarrow (M,g)$.
\end{Thm}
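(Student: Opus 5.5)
I will adapt Ratto's approach \cite{MR987757}. Write the metric (\ref{metric}) on the regular part $M_0\cong(0,\infty)\times\s^{k_0}\times G_1/H_1\times\dots\times G_m/H_m$ as
\begin{align*}
g=dt^2+f_0(t)^2g_{\s^{k_0}}+\sum_{i=1}^m f_i(t)^2 g_i ,
\end{align*}
with $f_0(0)=0$, $\dot f_0(0)=1$, $f_0$ odd and $f_1,\dots,f_m$ even and positive. I look for equivariant maps that are the identity on each factor $G_i/H_i$ and act radially on the $\R^{k_0+1}$ factor, $\varphi(t,x,p_1,\dots,p_m)=(r(t),x,p_1,\dots,p_m)$. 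The domain metric will be deformed only conformally in the radial direction, or more simply by changing the warping functions: $\tilde g=dt^2+\tilde f_0^2g_{\s^{k_0}}+\sum\tilde f_i^2g_i$. Since each $G_i/H_i$ is an isotropy-irreducible (normal) homogeneous space mapped identically, and the $g_i$ are homothetic, the identity map of $G_i/H_i$ is harmonic and its tension has no component in the $G_i/H_i$ directions. The algebraic conditions therefore disappear, and harmonicity of $\varphi$ reduces to a single ODE of the form (\ref{ode0}):
\begin{align*}
\ddot r+\Big(k_0\tfrac{\dot{\tilde f}_0}{\tilde f_0}+\sum_i k_i\tfrac{\dot{\tilde f}_i}{\tilde f_i}\Big)\dot r-\frac{k_0 f_0(r)\dot f_0(r)}{\tilde f_0^2}-\sum_i\frac{k_i f_i(r)\dot f_i(r)}{\tilde f_i^2}=0 .
\end{align*}
This ODE comes from the energy density $\tfrac12(\dot r^2+k_0f_0(r)^2/\tilde f_0^2+\sum k_if_i(r)^2/\tilde f_i^2)$, weighted by $\tilde f_0^{k_0}\prod\tilde f_i^{k_i}$.

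Ratto's trick is to run this the other way. I prescribe a nontrivial profile $r$ first and then solve for the deformed metric. I choose $r$ smooth and odd with $\dot r(0)=\lambda\neq1$, $\lambda>0$, for example $r(t)=\lambda t$ near $0$ and $r$ strictly increasing. This makes $\varphi$ neither constant nor the identity. I then keep $\tilde f_1,\dots,\tilde f_m$ fixed, say $\tilde f_i=f_i$, and treat the ODE as a first-order linear equation for $u=\tilde f_0^{k_0}$, or a closely related equation for $\tilde f_0$. A cleaner route is to write $\tilde g=\rho(t)^2dt^2+f_0^2g_{\s^{k_0}}+\sum f_i^2g_i$. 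The harmonic map equation then becomes a linear first-order ODE for $\rho^2$, with coefficients built from $r,\dot r,\ddot r$ and the $f$'s, and it can be integrated explicitly.

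The steps are as follows:
\begin{enumerate}
\item Derive the reduced ODE and check that, by irreducibility and non-flatness of the $G_i/H_i$, the tension field has only a radial component.
\item Choose $r$ and solve the linear ODE for the deformation function.
\item Verify positivity of the solution on $(0,\infty)$, which holds because an exponential of an integral is positive once the coefficients are controlled.
\item Verify smoothness at the singular orbit $t=0$. The deformed $\tilde f_0$ (or $\rho$) must satisfy the standard smoothness conditions there ($\tilde f_0$ odd with $\dot{\tilde f}_0(0)=1$, or $\rho$ even with $\rho(0)=1$), and $r$ must satisfy the smoothness conditions for equivariant maps (odd).
\end{enumerate}

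The main obstacle is step 4. The linear ODE is singular at $t=0$ because of the $k_0f_0(r)\dot f_0(r)/\tilde f_0^2$ term, so one must show that the solution can be chosen with the right parity and value at $0$. To handle this, I would take $r(t)=\lambda t$ exactly on $[0,\epsilon]$ and, near $0$, choose the deformation to be the exact rescaling that makes $\varphi$ a homothety-type harmonic map locally. For instance, I would take $\tilde f_0(t)=f_0(\lambda t)/\lambda$ and $\tilde f_i(t)=f_i(\lambda t)$, together with $\rho\equiv\lambda$, so that $\tilde g$ is the pullback of $g$ under $t\mapsto\lambda t$ up to adjustment. On $[0,\epsilon]$ the ODE then holds identically and the boundary conditions are automatic. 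Away from $0$ the equation is regular, and I would integrate the linear ODE from $t=\epsilon$ with matching data, which gives a smooth positive solution on $(0,\infty)$. A final check confirms that the choice of $r$ keeps the coefficients bounded, so that the solution does not blow up in finite time. The remaining risk is whether the pieces glue smoothly at $t=\epsilon$. I would handle this by making the prescribed $r$ smooth across $\epsilon$ and letting the ODE determine the metric, since a linear ODE with smooth coefficients produces a smooth solution.
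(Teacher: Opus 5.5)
Step 3 is a genuine gap, and positivity is the crux of the construction. With $\rho$ constant and $\tilde f_1,\dots,\tilde f_m$ held fixed, your reduced equation is linear in $w=\tilde f_0^{2}$ (not in $\tilde f_0^{k_0}$). However, it is affine, not homogeneous:
\begin{align*}
\tfrac{k_0}{2}\,\dot r\,\dot w=k_0\rho^2 f_0(r)\dot f_0(r)+\beta\,w,\qquad
\beta=\rho^2\sum_{i\ge1}k_i\frac{f_i(r)\dot f_i(r)}{\tilde f_i^{2}}-\ddot r-\dot r\sum_{i\ge1}k_i\frac{\dot{\tilde f}_i}{\tilde f_i}.
\end{align*}
So $w$ is not an exponential of an integral. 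For a general strictly increasing $r$ it can reach $0$ in finite time, where $\tilde g$ degenerates. For example, if $f_0$ decreases on some interval and $r$ crosses it slowly, the negative source term $2\rho^2f_0(r)\dot f_0(r)/\dot r$ dominates. Blow-up is not the danger, since the equation is linear.

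Your $\rho$-route has the same defect. There, with $\tilde V=\prod_j\tilde f_j^{k_j}$,
\begin{align*}
\tfrac{d}{dt}\bigl(\dot r^2\tilde V^2\rho^{-2}\bigr)=2\dot r\,\tilde V^2\sum_{j}k_jf_j(r)\dot f_j(r)/\tilde f_j^{2}.
\end{align*}
This is a quadrature for $\rho^{-2}$ (not $\rho^2$), and its integrand has no sign, because the $f_i$ with $i\ge1$ may decrease.

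What is missing is a choice of $r$ that fixes the sign of the source. For instance, take $\dot r>0$ on all of $[\epsilon,\infty)$ (strict monotonicity is not enough, since you divide by $\dot r$) and $r<\delta$, where $\dot f_0>0$ on $(0,\delta]$; such $\delta$ exists since $\dot f_0(0)=1$. Then at a first zero of $w$ the equation would force $\dot w>0$, which is absurd.

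You cannot borrow the paper's shortcut here. In the paper, $\sum_i k_i\alpha_i$ is obtained by the pure quadrature (\ref{eq-a}), so positivity of $e^{2\alpha_i}$ is automatic. That relies on the potential term of (\ref{bvp}) carrying no factor $e^{-2\alpha_i}$. Your (correct) $\tilde f_i^{-2}$ produces exactly that factor for $\tilde f_i=e^{\alpha_i}f_i$, and with it the equation becomes affine as above.

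Second, your explicit model near $N$ is wrong as written. The idea itself is good: it replaces the paper's use of the singular initial value problem from \cite{S26} (with $\tilde g=g$ near $N$) by an explicit local solution. But with $\rho\equiv\lambda$, $\tilde f_0=f_0(\lambda t)/\lambda$ and $\tilde f_i=f_i(\lambda t)$, you get $\dot{\tilde f}_0(0)=1\neq\rho(0)$, which is a conical singularity along $N$. Moreover, $r=\lambda t$ leaves a residual proportional to $(1-\lambda^2)\dot f_0(\lambda t)/f_0(\lambda t)$, so the map is not harmonic there.

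Take instead the full pullback $\tilde g=\Psi_\lambda^*g$ under $\Psi_\lambda(g\cdot\gamma(t))=g\cdot\gamma(\lambda t)$. That is, $\rho\equiv\lambda$ and $\tilde f_j(t)=f_j(\lambda t)$ for all $j=0,\dots,m$. Alternatively, use its rescaling $\lambda^{-2}\Psi_\lambda^*g$, with $\rho\equiv1$ and $\tilde f_j=f_j(\lambda t)/\lambda$. Then $\varphi$ is a local isometry, resp.\ homothety, near $N$, and smoothness there is automatic.

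The data you hold fixed on $[\epsilon,\infty)$ must continue this model rather than revert to the $f_i$. With that change, your uniqueness argument for the regular linear equation does give smooth gluing at $t=\epsilon$. Together with the sign choice for $r$, the construction then goes through.
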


\textbf{Organization}: 
We provide preliminaries on cohomogeneity one manifolds and harmonic maps between such manifolds in Section\,\ref{sec-pre}.
In Section\,\ref{sec-sph} we construct harmonic maps between specific spheres.
In Section\,\ref{sec-nc} we construct harmonic maps between non-compact cohomogeneity one manifolds and establish Theorem\,\ref{B}.

\section{Preliminaries}
\label{sec-pre}
In this section we provide preliminaries, namely we briefly recall 
the very basic background on cohomogeneity one manifolds, 
we include a short discussion on smoothness conditions for cohomogeneity one metrics and we provide a brief review
on harmonic maps between cohomogeneity one manifolds.
Finally, we present a brief account of harmonic self-maps on spheres.
The first three subsections are presented in a manner that very closely aligns with Section 2 of \cite{S26}.

\subsection{Cohomogeneity one manifolds}
\label{sub-coh}
In this subsection we briefly recall cohomogeneity one manifolds.
This subsection closely follows the exposition in \cite{MR4000241, S26}, a few sentences have been copied almost word for word. 

\medskip

A connected Riemannian manifold $M$ is said to have cohomogeneity one if it supports a smooth isometric action $G\times M\rightarrow M$ of
a compact Lie group $G$ such that the orbit space $M/G$ has dimension one. 
In this case, $M$ is also referred to as cohomogeneity one manifold.

\smallskip

Mostert \cite{MR95897,MR85460} established the following  classification result for cohomogeneity one manifolds, namely he showed that the orbit space $M/G$ of such manifolds is isometric to exactly one of the following spaces:
\begin{enumerate}
    \item the real line, i.e. $M/G=\mathbb{R}$;
    \item the circle, i.e. $M/G=S^1$;
    \item a closed interval, i.e. $M/G=[0,L]$ for some $L\in\R_+$; 
    \item a half line, i.e. $M/G=[0,\infty)$.
\end{enumerate}
From now on, we will refer to these four possibilities as cases (1), (2), (3), and (4), respectively.
By $\partial(M/G)$ we denote the boundary points of the orbit space $M/G$, i.e. $\partial(\mathbb{R})=\emptyset$, $\partial(S^1)=\emptyset$, $\partial([0,L])=\{0,L\}$ and $\partial([0,\infty))=\{0\}$. Furthermore, $(M/G)^{\circ}$ denotes the interior of the orbit space $M/G$.

\smallskip

Below let $M$ be  a cohomogeneity one manifold.
Denote by $\pi:M\rightarrow M/G$ the natural projection map.
The fibre of a boundary point of the orbit space, i.e. an element of $\partial(M/G)$, is a non-principal orbit $N$.
In case (3) there are exactly two non-principal orbits $N_0$ and $N_1$.
In case (4) there is exactly one non-principal orbit $N_0$.
 Further, the fibre of each interior point of $M/G$ is a principal orbit.

 \smallskip

There are two types of non-principal orbits, namely exceptional and singular orbits.
A non–principal orbit $N$ is called exceptional orbit, or simply exceptional, if its dimension equals the common dimension of the principal orbits.
A non–principal orbit, which is not exceptional, is called singular orbit, or simply singular.
The collection of all principal orbits is denoted by $M_0$ and is called the regular part of $M$. 

\smallskip

Let $\gamma$ be a unit-speed normal geodesic. By this we refer to an unit speed geodesic $\gamma:\R\to M$ that passes through all orbits perpendicularly and in addition satisfies $\gamma(0)\in N_0$ and $\gamma(L) \in N_1$ in case (3), and $\gamma(0)\in N_0$ in case (4).
The isotropy groups of the regular points $\gamma(t)$, $t\in (M/G)^{\circ}$, are constant. This common principal isotropy group is henceforth denoted by $H$.

\smallskip

In case (3), i.e. when $M/G$ is isometric to a closed interval, we further assume that $\gamma$ is closed or, equivalently, that the Weyl group $W$ is finite.
Recall that the Weyl group $W$ is defined to be the subgroup of the elements of the Lie group $G$ that leave $\gamma$ invariant modulo the subgroup of elements that fix $\gamma$ pointwise. The Weyl group is a dihedral subgroup of $N(H)/H$ generated by two involutions that fix $\gamma(0)$ and $\gamma(L)$, respectively, and it acts simply transitively on the regular segments of the normal geodesic.

\smallskip

The Riemannian metric \( g \) of \( M \) is described using a one-parameter family of \( G \)-invariant metrics \( g_t \) on the principal orbits \( G/H \). Specifically, we have 
\begin{align} 
\label{metric-c} 
g=g_t+dt^2, 
\end{align} 
where \( g_t \) must meet invariance conditions in case (2) and smoothness conditions in cases (3) and (4), see e.g. \cite{MR1923478} and the references therein. 
The following subsection contains a discussion of the smoothness conditions for the metric in cases (3) and (4).
Finally, note that because $M_0$ is dense in $M$, (\ref{metric-c}) serves to describe the metric for all of $M$ as well.

\subsection{Smoothness conditions of cohomogeneity one metrics}
We address the smoothness conditions for (\ref{metric-c}) at singular orbits in this subsection.
Therefore, let us assume that $M$ is a cohomogeneity one manifold containing (at least one) singular orbit. 
Moreover, let us assume that the dense and open set $M_0\subset M$ possesses a $G$-invariant metric of the type (\ref{metric-c}).
In \cite{MR1758585} and \cite{MR4400726} conditions necessary for the extension of this metric to the singular orbit to be smooth has been examined and smoothness criteria have been provided. 
 It is often easier to apply the smoothness conditions described by Verdiani and Ziller in \cite{MR4400726} in practice than those provided in \cite{MR1758585}; see \cite{VZ1} for a comparison. The subsequent presentation closely follows the one in \cite{MR4400726}.

\smallskip

A non-compact cohomogeneity one manifold with a singular orbit can be considered as a vector bundle, a compact cohomogeneity one manifold can be considered as the union of two vector bundles, see e.g. \cite{MR1155662,MR3362465} and the references therein. 
In this subsection we consider the local problem of 
the extension of the metric (\ref{metric-c}) to a singular orbit.
Therefore, we can limit the following considerations to a single vector bundle.

\smallskip

Let $G$ be a compact Lie group, and let $H$ and $K$ be closed Lie subgroups of $G$ such that $H \subset K \subset G$ and $K/H = \s^{\ell}$ for some $\ell \in \mathbb{N} = \{1, 2, 3, \dots\}$.
This means that $K$ acts transitively on $\s^{\ell}$; in other words, there is a representation $\rho:K\rightarrow O(\ell+1)$ such that for every $x\in\s^{\ell}$, $\rho(K) x=\s^{\ell}$.
The isotropy group $K$ acts on the sphere $\s^{\ell}$, and this action can be extended to a linear action on the disk $D:=D^{\ell+1}\subset\mathbb{R}^{\ell+1}$.
Hence, $M=G\times_KD$ constitutes a homogeneous disc bundle, where the base space is $G/K$ and the structure group is $H$. The isotropy group $K$ acts on $G\times D$ in the following way: 
\begin{align*} (
k, (g,d))\mapsto (gk^{-1}, \rho(k)d).   
\end{align*}
Also, note that the action of $G$ on $M$ via left multiplication has cohomogeneity one, specifically 
$$(g,[\hat{g},d])\mapsto[g\hat{g},d]$$ for $d\in D$ and $g, \hat{g}\in G$.
The singular orbit $N:=G\times_K\{0\}$ is diffeomorphic to the space $G/K$.
For sufficiently small values of $t$, the principal orbits are represented by $P_t:=G\times_K\s^{\ell}_t$. Here, $\s^{\ell}_t\subset D$ represents the sphere with radius $t$. Every $P_t$ is a diffeomorphic copy of $G/H$.
The disk $D$ can be identified G-equivariantly with a manifold that is orthogonal to the singular orbit at the point $\gamma(0)$ using the exponential map. Thus, $D$ can be viewed as a slice of the action of $G$ on $M$.

\smallskip

To state the main result of \cite{MR4400726}, let us consider a normal geodesic $\gamma\colon[0,\infty)\to \mathbb{R}^{\ell+1}$. 
The values of the metric on $M_0$ along $\gamma$ determine it.
This is due to the fact that these values and the action of $G$ determine the metric on all of $M_0$.
Let $\mathfrak{g}$ and $\mathfrak{h}$ denote the Lie algebras corresponding to $G$ and $H$, respectively. 
Let $Q$ denote a predetermined biinvariant metric on $G$. In addition, denote the orthonormal complement of the Lie algebra $\ah$ in~$\ag$ by $\mathfrak{n}$.
The complement $\mathfrak{n}$ can be identified with the tangent space to the regular orbits along $\gamma$ through action fields, i.e. by the map  $X\in\mathfrak{n} \mapsto X^{\ast}(\gamma(t))$.

\smallskip

Let $X_i$ be a basis of $\mathfrak{n}$. Then, for every $t>0$, $X^*_i(\gamma(t))$ forms a basis of $\dot \gamma^\perp(t)\subset T_{\gamma(t)}M$.
Moreover, the metric is determined by the collection of the $r$ functions $g_{ij}(t)=g(X_i^*,X_j^*)_{c(t)},\ i\le j$.

With this preparation at hand, we can reference the following theorem that provides smoothness conditions for $g$:

\begin{Thm}[\cite{MR4400726}]
\label{gsmooth}
Let $g_{ij}(t),\ t>0$ be a smooth family of positive definite matrices describing the cohomogeneity one metric on the regular part along a normal geodesic $\gamma(t)$.     Then there exist integers $a_{ij}^k$ and $ d_k$, with $\ d_k\ge 0$,  such that the metric has a smooth extension to all of $M$ if and only if
$$\sum_{i,j} a_{ij}^k\,g_{ij}(t)=t^{d_k}\phi_k(t^2)\quad \text{ for } k=1,\cdots, r, \ \text{ and } t>0 $$
 where $\phi_1,\cdots,\phi_{r}$ are smooth functions defined for $t\ge 0$.
\end{Thm}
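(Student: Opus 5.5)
The plan is to reduce the smoothness of the metric to a linear-algebra problem on the slice $D=\mathbb{R}^{\ell+1}$, using the structure of the homogeneous disc bundle $M=G\times_K D$. First I would split $\mathfrak{n}=\mathfrak{m}\oplus\mathfrak{p}$, where $\mathfrak{p}$ is the $Q$-orthogonal complement of $\mathfrak{h}$ in $\mathfrak{k}$ (tangent to the sphere $K/H=\s^{\ell}$) and $\mathfrak{m}$ is the $Q$-orthogonal complement of $\mathfrak{k}$ in $\mathfrak{g}$ (tangent to the singular orbit $G/K$). The metric $g$ is smooth on $M$ if and only if its pullback to $G\times D$ is smooth, and by $G$-invariance this happens if and only if the $K$-equivariant, matrix-valued map $D\to \mathrm{Sym}^2\bigl((\mathfrak{m}\oplus\mathbb{R}^{\ell+1})^{\ast}\bigr)$ is smooth near $0$. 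Each entry of this map is built from the inner products $g(X^{\ast},Y^{\ast})$, $X,Y\in\mathfrak{m}\oplus\mathfrak{p}$, together with $dt^2$.

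The key step is to bring in the classical description of smooth equivariant maps. For the compact group $K$ acting linearly on $D$ and on the finite-dimensional representation $W=\mathrm{Sym}^2(\mathfrak{m}\oplus\mathbb{R}^{\ell+1})^{\ast}$, the $K$-equivariant smooth maps $D\to W$ form a finitely generated module over the smooth invariant functions. This follows from Schwarz's theorem and a Malgrange-type division, but the only fact needed here is the following. Since $K$ is transitive on spheres, the smooth invariant functions are exactly $\phi(\abs{x}^2)$ with $\phi$ smooth. The equivariant polynomials are generated by homogeneous equivariant polynomials $P_1,\dots,P_s$, and every smooth equivariant map has the form $\sum\phi_k(\abs{x}^2)P_k(x)$. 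Restricting to the normal geodesic $\gamma(t)=t\,e_0$, the polynomial $P_k$ of degree $d_k$ gives $P_k(te_0)=t^{d_k}P_k(e_0)$.

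Next I would choose the integers $a^k_{ij}$. Decompose $W$ restricted to $\gamma$ (equivalently the $H$-fixed vectors $W^H$, where the values along $\gamma$ must lie) by grouping the $K$-isotypic pieces generated by the $P_k(e_0)$. A suitable basis change, which has rational (hence, after scaling, integer) coefficients when expressed in the $g_{ij}$, turns $g(t)$ into coordinates $\sum_{i,j}a^k_{ij}g_{ij}(t)$, $k=1,\dots,r$. Smoothness is then equivalent to each coordinate being of the form $t^{d_k}\phi_k(t^2)$. The coefficients can be made integral because the relevant vectors $P_k(e_0)$ arise from weight or spherical-harmonic decompositions of $K$-representations, and rational bases exist for them. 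To prove the necessity direction, I would use Taylor expansion along $\gamma$ combined with evenness under the element of $K$ reversing $\gamma$. For sufficiency, I would extend equivariantly via $x\mapsto \phi_k(\abs{x}^2)P_k(x)$.

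I expect the main obstacle to be showing that the map from equivariant maps to their restriction on $\gamma$ is bijective onto exactly these conditions. This requires two things: the values $P_k(e_0)$ must be linearly independent in $W^H$, and the degrees $d_k$ must be the minimal ones, so that no extra relations between different $t$-powers are hidden. The first thing I would try is to reduce to the irreducible summands of $D$ and $\mathfrak{m}$ under $K$, treating them one at a time. Here the generators are explicit (spherical harmonics of degree $d$ in $\mathbb{R}^{\ell+1}$), and the counting $r=\dim W^H$ can be checked summand by summand.
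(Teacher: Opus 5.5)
The paper itself does not prove this statement; it quotes it from Verdiani--Ziller \cite{MR4400726}. Their argument is elementary. A circle $L\subset K$ rotating a $2$-plane of the slice through $\dot\gamma(0)$ splits $\mathfrak{m}\oplus\mathfrak{p}$ into $L$-weight spaces. Everything is then reduced to the fact that a function on $\mathbb{R}^2$ of rotation weight $d$ is smooth iff its radial profile is $t^{d}\phi(t^2)$, and the $d_k$ and the integers $a^k_{ij}$ are read off from the weights.

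Your route (Schwarz--Po\'enaru: smooth equivariants are $\sum_k\phi_k(\abs{x}^2)P_k$, then restrict to $\gamma$) is a legitimate and more conceptual alternative. As written, however, it leaves open the step that carries the content, the one you flag yourself: restriction to $\gamma$ must map the smooth equivariants \emph{onto} $\bigoplus_k t^{d_k}C^\infty(t^2)\,b_k$ for some basis $(b_k)$ of $W^H$. For an arbitrary generating set the $P_k(e_0)$ need not be independent, and passing to irreducible summands does not by itself give independence.

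What closes the gap is that $\mathbb{R}[D]^K=\mathbb{R}[\abs{x}^2]$ is a PID. Hence the finitely generated, torsion-free graded module of polynomial equivariants is free, with a homogeneous basis $P_1,\dots,P_s$.
\begin{enumerate}
\item The values $v_k=P_k(e_0)$ span $W^H$. Each $w\in W^H$ equals $F(e_0)$ for the smooth equivariant map $F(k\cdot se_0)=\beta(s)\,k\cdot w$, where $\beta$ is a cut-off vanishing near $0$ with $\beta(1)=1$; now apply Po\'enaru to $F$.
\item The $v_k$ are independent. If $\sigma\in K$ reverses $\gamma$, then $\sigma v_k=(-1)^{d_k}v_k$, so a relation $\sum c_kv_k=0$ splits by parity of $d_k$. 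Within one parity class, let $d$ be the largest $d_k$ with $c_k\neq 0$. Then $\sum_k c_k\abs{x}^{d-d_k}P_k$ is a polynomial equivariant vanishing on $\{te_0:t>0\}$, hence on $D$, which contradicts freeness.
\end{enumerate}
Thus $s=\dim W^H$. Necessity is then immediate from the Po\'enaru representation. Your proposed Taylor expansion plus evenness would only give parities, not the vanishing orders $t^{d_k}$. Sufficiency is your extension $\sum_k\phi_k(\abs{x}^2)P_k$, together with $K\cdot\{te_0:t>0\}=D\setminus\{0\}$.

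Two further points need repair.

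First, the passage from $W$-coordinates back to the $g_{ij}$ is not as clean as you state. At $te_0$, the entries of the metric involving slice directions are $g(X^*,Z^*)/t$ and $g(Z^*,Z'^*)/t^2$ for $X\in\mathfrak{m}$ and $Z,Z'\in\mathfrak{p}$, and the $\partial_t\partial_t$-entry is the constant $1$. Work blockwise in $\mathrm{Sym}^2\mathfrak{m}\oplus(\mathfrak{m}\otimes D)\oplus\mathrm{Sym}^2D$, which are $K$-submodules. Then the degrees shift by $1$ resp.\ $2$. Moreover, the $\mathrm{Sym}^2D$-block produces inhomogeneous conditions such as $g(Z^*,Z^*)-t^2\in t^4C^\infty(t^2)$ for suitably normalized $Z$. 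Already for $\SO(2)$ acting on $\mathbb{R}^2$ this is the condition that $f$ be odd with $\dot f(0)=1$. Such conditions are not of the homogeneous form in the statement, so this normalization on the slice has to be imposed separately.

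Second, integrality of the $a^k_{ij}$ does not follow from your argument, and ``rational bases exist'' is not a proof. For an arbitrary basis $X_i$ of $\mathfrak{n}$ the coefficients are in general only real. Integers arise for a basis adapted to the weights of a circle in $K$, which is exactly where Verdiani--Ziller's method does its work.
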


\subsection{Harmonic maps between cohomogeneity one manifolds}
\label{sub-har}
We will briefly recall the basics of harmonic maps between cohomogeneity-one manifolds, compare \cite{MR4000241,S26}.

\smallskip

Let $G$ and $K$ act on $M$ and $N$ with cohomogeneity one, respectively.
Moreover, let 
\begin{align*} A: G\rightarrow K \end{align*}
denote a group homomorphism. Choose a unit-speed normal geodesic $\gamma$ on $M$ and a unit-speed normal geodesic $\hat{\gamma}$ on $N$.

\smallskip

We define the equivariant map $\psi$ as follows: 
\begin{align*} 
\psi(g\cdot \gamma(t))=A(g)\cdot \hat{\gamma}(r(t)), \end{align*} 
where $r:(M/G)^{\circ}\rightarrow \mathbb{R}$ is a smooth function that meets boundary conditions in cases (3) and (4), as detailed in (i) and (ii) of Subsection\,\ref{sub-har}.
The equivariance of the tension field implies that it suffices to study the tension field of $\psi$ along $\gamma(t)$.

\smallskip

To present the normal component of the tension field in a computationally convenient way, we first need to introduce some notation:
Following \cite{MR4000241}, we let \begin{gather*} \Pi_t^{r(t)}: T_{\hat{\gamma}(t)} (K\cdot\hat{\gamma}(t)) \to T_{\hat{\gamma}(r(t))} (K\cdot \hat{\gamma}(r(t))) \end{gather*} represent the parallel transport along the normal geodesic~$\hat{\gamma}$.
Furthermore, we let $J_t^{r(t)}$ be the endomorphism of $T_{\hat{\gamma}(t)} (K\cdot\hat{\gamma}(t))$, which is obtained by composing the action field homomorphism $X^{\ast}_{\vert\hat{\gamma}(t)} \mapsto X^{\ast}_{\vert \hat{\gamma}(r(t))}$ with $(\Pi_{r(t)}^t)^{-1} = \Pi_t^{r(t)}$. 
Additionally, we define $f_0 = \dot{\hat{\gamma}}(t)$ and select $F_1,\ldots, F_m\in \ak$ so that $f_1 = F^{\ast}_{1\vert\hat{\gamma}(t)},\ldots,f_m = F^{\ast}_{m\vert\hat{\gamma}(t)}$ constitute an orthonormal basis of $T_{\hat{\gamma}(t)}(K\cdot\hat{\gamma}(t))$.

Having this preparation at hand, we can now provide the normal component of the tension field.

\begin{Thm}
The normal component $\tau^{\nor}_{\vert\gamma(t)} = \langle\tau_{\vert\gamma(t)}, \dot{\hat{\gamma}}(r(t))\rangle$ of the tension field is given by
\begin{gather*}
  \tau^{\nor}_{\vert\gamma(t)} = \ddot r(t) - \dot r(t) \tr S_{\vert \gamma(t)}
    + \sum_{\mu=1}^n\sum_{i,j=1}^mc_{\mu,i}c_{\mu,j}\langle(J_t^{r(t)})^{\ast} (\Pi_t^{r(t)})^{-1} \hat{S}_{\vert \hat{\gamma}(r(t))} \Pi_t^{r(t)} J_t^{r(t)} f_i,f_j\rangle_{\lvert\hat{\gamma}(t)},
\end{gather*}
where
\begin{gather*}
c_{\mu,i}:=c_{\mu,i}(t):=\langle (dA_{e}E_{\mu})^{\ast}, f_i\rangle_{\lvert\hat{\gamma}(t)}.
\end{gather*}
Here $e$ denotes the identity element of $G$.
Moreover, $S_{\vert \gamma(t)}$ denotes the shape operator of the orbit $G\cdot \gamma(t)$ at $\gamma(t)$, $\hat{S}_{\vert \hat{\gamma}(t)}$ denotes the shape operator of the orbit $K\cdot \hat{\gamma}(t)$ at $\hat{\gamma}(t)$ and $(J_t^{r(t)})^{\ast}$ denotes the adjoint endomorphism of $J_t^{r(t)}$.
\label{normalone}
\end{Thm}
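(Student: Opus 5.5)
We compute $\tau(\psi)$ at $\gamma(t)$ directly from the frame formula in the introduction, using the adapted orthonormal frame $e_0=\dot\gamma(t)$, $e_\mu=E^{\ast}_{\mu\vert\gamma(t)}$, $\mu=1,\dots,n$. Here $E_1,\dots,E_n\in\mathfrak{n}$ are chosen so that the action fields are orthonormal at $\gamma(t)$; this is the convention implicit in the statement, since the $c_{\mu,i}$ are built from $dA_eE_\mu$. The tension field then splits as
\begin{align*}
\tau = \Big({}^{\psi}\nabla_{e_0}d\psi\, e_0 - d\psi(\nabla_{e_0}e_0)\Big) + \sum_{\mu=1}^n\Big({}^{\psi}\nabla_{e_\mu}d\psi\,e_\mu - d\psi(\nabla_{e_\mu}e_\mu)\Big),
\end{align*}
and we take the inner product with $\dot{\hat\gamma}(r(t))$ term by term.

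\textbf{Radial part.} Since $\gamma$ is a geodesic, $\nabla_{e_0}e_0=0$. Also $d\psi\,\dot\gamma=\dot r\,\dot{\hat\gamma}(r(t))$, and $\hat\gamma$ is a geodesic, so the first bracket equals $\ddot r\,\dot{\hat\gamma}(r)$. Its normal component is therefore $\ddot r(t)$.

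\textbf{Tangential part, first piece.} For the orbit directions we write
\begin{align*}
\sum_\mu \nabla_{e_\mu}e_\mu = \Big(\sum_\mu\nabla_{e_\mu}e_\mu\Big)^{\top} + \sum_\mu\langle \nabla_{e_\mu}e_\mu,\dot\gamma\rangle\,\dot\gamma .
\end{align*}
Up to the sign convention for $S$, the normal coefficient is $\sum_\mu\langle\nabla_{e_\mu}e_\mu,\dot\gamma\rangle=\tr S_{\vert\gamma(t)}$. Applying $d\psi$ gives $\dot r\,\tr S\,\dot{\hat\gamma}(r)$, which produces the term $-\dot r\,\tr S$. The tangential remainder is mapped by $d\psi$ into $T(K\cdot\hat\gamma(r))$, because $\psi$ maps orbits into orbits; hence it has no normal component.

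\textbf{Tangential part, second piece.} This is the main step. By equivariance, $\psi(\exp(sE_\mu)\gamma(t))=\exp(s\,dA_eE_\mu)\hat\gamma(r(t))$, so $d\psi\,e_\mu=(dA_eE_\mu)^{\ast}_{\vert\hat\gamma(r(t))}$. To compute ${}^{\psi}\nabla_{e_\mu}d\psi\,e_\mu$, we use that the curve $s\mapsto\exp(sE_\mu)\gamma(t)$ is mapped to an integral curve of the Killing field $Y_\mu:=(dA_eE_\mu)^{\ast}$. Hence ${}^{\psi}\nabla_{e_\mu}d\psi\,e_\mu=\hat\nabla_{Y_\mu}Y_\mu$ at $\hat\gamma(r)$. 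Its normal component is $\langle\hat\nabla_{Y_\mu}Y_\mu,\dot{\hat\gamma}\rangle=\langle\hat S Y_\mu,Y_\mu\rangle$, with the same sign convention as above.

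It remains to express $Y_\mu$ at $\hat\gamma(r(t))$ through the basis $f_i$ at $\hat\gamma(t)$. At $\hat\gamma(t)$ we expand $Y_{\mu\vert\hat\gamma(t)}=\sum_i c_{\mu,i}f_i$, where the $c_{\mu,i}$ are as defined in the statement. Action fields are carried from $\hat\gamma(t)$ to $\hat\gamma(r(t))$ by the linear map $X^\ast_{\vert\hat\gamma(t)}\mapsto X^\ast_{\vert\hat\gamma(r(t))}$. Writing this map as $\Pi_t^{r(t)}J_t^{r(t)}$ gives $Y_{\mu\vert\hat\gamma(r)}=\sum_i c_{\mu,i}\,\Pi_t^{r(t)}J_t^{r(t)}f_i$. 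Parallel transport is an isometry, so
\begin{align*}
\langle \hat S Y_\mu, Y_\mu\rangle = \sum_{i,j}c_{\mu,i}c_{\mu,j}\langle (J_t^{r(t)})^{\ast}(\Pi_t^{r(t)})^{-1}\hat S\,\Pi_t^{r(t)}J_t^{r(t)}f_i,f_j\rangle_{\vert\hat\gamma(t)}.
\end{align*}
Summing over $\mu$ yields the triple sum in the statement.

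\textbf{Main obstacle.} The delicate points are the two sign conventions for $S$ and $\hat S$, which must match so that the final signs come out as stated, and the precise bookkeeping of $J_t^{r(t)}$ relative to $\Pi$. Both should follow the conventions of \cite{MR4000241}, and we would check them against a known example, such as the identity map, for which $\tau^{\nor}=0$.
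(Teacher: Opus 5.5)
Your computation is correct. It is the standard direct evaluation of $\nabla d\psi$ on $\dot\gamma$ and on the action fields. The paper gives no proof of this theorem and simply quotes it from \cite{MR4000241}, so your argument supplies exactly the derivation being referenced.

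The sign issue you flag is not a real obstacle. The paper's next display fixes the convention $\langle X^{\ast},\hat S X^{\ast}\rangle=-\langle X^{\ast},\nabla_{\hat T}X^{\ast}\rangle$, i.e.\ $\hat S=-\nabla\hat T$ (since $[\hat T,X^{\ast}]=0$). This gives $\langle\nabla_{Y}Y,\hat T\rangle=\langle Y,\hat S Y\rangle$, and hence exactly the stated signs. An identity-map check could not detect the overall convention anyway, since both shape-operator terms flip together.

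Using the $E_\mu^{\ast}$, which are orthonormal only at $\gamma(t)$, is legitimate because $\nabla d\psi$ is tensorial.
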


Similar to \cite{MR4000241}, we give a second formula for the normal component of the tension field, specifically one that utilizes data from the acting groups $G$ and $K$. 
Let $\hat{Q}$ denote a fixed biinvariant metric defined on $K$. Let $\hat{\mathfrak{n}}$ be the orthonormal complement of the Lie algebra of the principal isotropy group in $\ak$. 
Define the metric endomorphisms $\hat{P}_t : \hat{\mathfrak{n}} \to \hat{\mathfrak{n}}$ by \begin{gather*} \hat{Q}(X, \hat{P}_t\cdot Y) = \langle X^{\ast},Y^{\ast} \rangle_{\vert\hat{\gamma}(t)}. \end{gather*}

We have
\begin{multline*}
  \langle X^{\ast},\hat{S}\cdot X^{\ast}\rangle_{\vert\hat{\gamma}(r(t))}=  -\langle X^{\ast}, \nabla_{\hat{T}} X^{\ast}\rangle_{\vert\hat{\gamma}(r(t))}= -\tfrac{1}{2\dot r(t)} \tfrac{d}{dt} \langle X^{\ast},X^{\ast}\rangle_{\vert\hat{\gamma}(r(t))} \\
  = -\tfrac{1}{2} \hat{Q}(X,(\dot{\hat{P}})_{r(t)} X)
  = -\tfrac{1}{2} \langle X^{\ast}, (\hat{P}_t^{-1}(\dot{ \hat{P}})_{r(t)} X)^{\ast} \rangle_{\vert\hat{\gamma}(t)}
\end{multline*}
and hence the following statement holds.

\begin{Thm}
The normal component of the tension field is given by
\begin{gather*}
  \tau^{\nor}_{\vert\gamma(t)} = \ddot r(t) + \tfrac{1}{2}\dot r(t) \tr P_t^{-1}\dot P_t
    - \tfrac{1}{2}\sum_{\mu=1}^n\sum_{i,j=1}^mc_{\mu,i}c_{\mu,j}P_{i,j},
\end{gather*}
where \begin{gather*}
c_{\mu,i}:=c_{\mu,i}(t):=\langle (dA_{e}E_{\mu})^{\ast}, f_i\rangle_{\lvert\hat{\gamma}(t)}.
\end{gather*}
and $P_{i,j}$ is the $(i,j)$-entry of the representing matrix of $\hat{P}_t^{-1}(\dot{ \hat{P}})_{r(t)}$.
\label{normaltwo}
\end{Thm}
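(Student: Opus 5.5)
We derive Theorem~\ref{normaltwo} directly from Theorem~\ref{normalone}. The strategy is to rewrite each of the two non-trivial terms there, the mean curvature term $-\dot r\,\tr S$ and the double sum involving $\hat S$, using the metric endomorphisms $P_t$ and $\hat P_t$. The term $\ddot r(t)$ is unchanged.

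\textbf{First term.} We show that $\tr S_{\vert\gamma(t)}=-\tfrac12\tr P_t^{-1}\dot P_t$, where $P_t$ is the analogue of $\hat P_t$ for the $G$-action on $M$ with respect to $Q$. Take action fields $X^\ast$ with $X\in\mathfrak n$. These commute with the normal field $T=\dot\gamma$ along $\gamma$, since $T$ is $G$-invariant. Hence, exactly as in the displayed computation preceding the theorem,
\[
\langle X^\ast,S X^\ast\rangle=-\langle X^\ast,\nabla_TX^\ast\rangle=-\tfrac12\tfrac{d}{dt}\langle X^\ast,X^\ast\rangle=-\tfrac12 Q(X,\dot P_tX).
\]
By polarization $\langle X^\ast,SY^\ast\rangle=-\tfrac12 Q(X,\dot P_tY)$. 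The inner product on $T_{\gamma(t)}(G\cdot\gamma(t))$ pulls back to $Q(\cdot,P_t\,\cdot)$ on $\mathfrak n$. Therefore $S$ corresponds to the endomorphism $-\tfrac12P_t^{-1}\dot P_t$ of $\mathfrak n$, and taking traces gives the claim. This produces the term $+\tfrac12\dot r\,\tr P_t^{-1}\dot P_t$.

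\textbf{Second term.} Here we use the displayed identity, polarized. For $X,Y\in\hat{\mathfrak n}$,
\[
\langle X^\ast,\hat S Y^\ast\rangle_{\vert\hat\gamma(r(t))}=-\tfrac12\langle X^\ast,(\hat P_t^{-1}\dot{\hat P}_{r(t)}Y)^\ast\rangle_{\vert\hat\gamma(t)}.
\]
The key observation is the meaning of the operator $(J_t^{r(t)})^\ast(\Pi_t^{r(t)})^{-1}\hat S\,\Pi_t^{r(t)}J_t^{r(t)}$. Since parallel transport is an isometry, $\Pi J f_i$ is the action field $F_i^\ast$ evaluated at $\hat\gamma(r(t))$. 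Consequently
\[
\langle (J^\ast\Pi^{-1}\hat S\Pi J) f_i,f_j\rangle_{\vert\hat\gamma(t)}=\langle \hat S F_i^\ast,F_j^\ast\rangle_{\vert\hat\gamma(r(t))}.
\]
By the polarized identity this equals $-\tfrac12\langle (\hat P_t^{-1}\dot{\hat P}_{r(t)}F_i)^\ast,F_j^\ast\rangle_{\vert\hat\gamma(t)}$. Because $f_1,\dots,f_m$ are orthonormal at $\hat\gamma(t)$, this is $-\tfrac12 P_{i,j}$, the matrix entry of $\hat P_t^{-1}\dot{\hat P}_{r(t)}$ in the basis $F_i$ (up to the transpose convention, which is harmless since the sum is symmetric in $i,j$). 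Substituting into the double sum with the coefficients $c_{\mu,i}c_{\mu,j}$ gives $-\tfrac12\sum c_{\mu,i}c_{\mu,j}P_{i,j}$.

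\textbf{Main obstacle.} The delicate step is the identification $\Pi_t^{r(t)}J_t^{r(t)}F^\ast_{\vert\hat\gamma(t)}=F^\ast_{\vert\hat\gamma(r(t))}$ together with the adjoint bookkeeping. One must check that the definition of $J$ (composing with $\Pi^{-1}$) makes $\Pi J$ exactly the action-field transfer map. One must also check that the adjoint $J^\ast$, taken with respect to $\langle\cdot,\cdot\rangle_{\vert\hat\gamma(t)}$, turns the quadratic form into an evaluation at $\hat\gamma(r(t))$. A smaller point is the chain-rule factor $1/\dot r$ in the displayed computation, since there $\dot{\hat P}$ means the derivative in the geodesic parameter evaluated at $r(t)$. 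Once these conventions are fixed, the result is a direct substitution into Theorem~\ref{normalone}.
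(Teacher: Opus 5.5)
Your proposal is correct and follows the paper's own route. The paper likewise derives Theorem~\ref{normaltwo} from Theorem~\ref{normalone}, using the displayed chain $\langle X^{\ast},\hat S X^{\ast}\rangle_{\vert\hat\gamma(r(t))}=-\tfrac12\langle X^{\ast},(\hat P_t^{-1}(\dot{\hat P})_{r(t)}X)^{\ast}\rangle_{\vert\hat\gamma(t)}$ together with its tacit domain-side analogue for $\tr S$; you simply make explicit what the paper leaves implicit, namely the polarization, the identification $\Pi_t^{r(t)}J_t^{r(t)}f_i=F^{\ast}_{i\vert\hat\gamma(r(t))}$, and the resulting matrix entries $P_{i,j}$.
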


The tangential component of the tension field is given as follows:

\begin{Thm}
\label{tanaltpart}
The tangential component of the tension field is given by
\begin{gather*}
  \tau^{\tan}_{\vert\gamma(t)} = 
  \bigl(\hat{P}_{r(t)}^{-1} \sum_{\mu=1}^n [dA_{e}E_{\mu},\hat{P}_{r(t)}dA_{e}E_{\mu}]\bigr)^{\ast}_{\vert\hat{\gamma}(r(t))}
\end{gather*}
where $E_1,\ldots,E_m \in \mathfrak{n}$ are such that $E^{\ast}_{1\vert\gamma(t)},\ldots, E^{\ast}_{n\vert\gamma(t)}$ form an orthonormal basis of $T_{\gamma(t)}(G\cdot{\gamma}(t))$.
\end{Thm}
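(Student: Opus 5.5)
We compute the tangential part of $\tau(\psi)$ along $\gamma(t)$ from the defining formula $\tau=\sum_i({}^{\psi}\nabla_{e_i}(d\psi\,e_i)-d\psi(\nabla_{e_i}e_i))$. We use the orthonormal frame $e_0=\dot\gamma(t)$, $e_\mu=E_\mu^{\ast}$ with $\mu=1,\dots,n$ along $\gamma(t)$, extended $G$-invariantly where convenient.

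\textbf{The $\dot\gamma$ direction and the term $d\psi(\nabla_{e_i}e_i)$.} We have $d\psi(\dot\gamma)=\dot r\,\dot{\hat\gamma}(r(t))$. Since $\hat\gamma$ is a normal geodesic, ${}^{\psi}\nabla_{\dot\gamma}(\dot r\,\dot{\hat\gamma})=\ddot r\,\dot{\hat\gamma}$, which is purely normal. The vector $\nabla_{\dot\gamma}\dot\gamma$ vanishes. The tangential part of $\sum_\mu\nabla_{E_\mu^\ast}E_\mu^\ast$ is $G$-equivariant data on the orbit. Its image under $d\psi$ is $\sum_\mu (dA_e(\nabla^{G/H}_{E_\mu}E_\mu))^\ast$. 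For an orthonormal basis of a naturally reductive or general invariant metric, $\sum_\mu\nabla_{E_\mu}E_\mu$ can be expressed as $\sum_\mu P^{-1}[E_\mu,PE_\mu]$ on the domain orbit. We will first try to show that these domain terms cancel, or are absorbed, by choosing the frame adapted to $P_t$. If that fails, we keep them and check that the combined expression still reduces to the stated formula, since $A$ maps $\mathfrak n$-brackets compatibly.

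\textbf{Main term.} The key computation is the tangential part of ${}^{\psi}\nabla_{E_\mu^\ast}\bigl((dA_eE_\mu)^\ast\bigr)$ at $\hat\gamma(r(t))$, because $d\psi(E_\mu^\ast)=(dA_eE_\mu)^\ast$ by equivariance. Write $X=dA_eE_\mu$ and $\hat P=\hat P_{r(t)}$. For Killing fields we use the Koszul formula, $2\langle\nabla_{X^\ast}X^\ast,Y^\ast\rangle=2\langle[X^\ast,Y^\ast],X^\ast\rangle$ plus terms that vanish since $\langle X^\ast,X^\ast\rangle$ is invariant. This gives $\langle\nabla_{X^\ast}X^\ast,Y^\ast\rangle=\langle [X,Y]^\ast,X^\ast\rangle$ up to the sign convention $[X,Y]^\ast=-[X^\ast,Y^\ast]$. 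Inserting $\hat P$ and using $\mathrm{ad}$-invariance of $\hat Q$:
\[
\hat Q([X,Y],\hat PX)=\hat Q(Y,[\hat PX,X])=\hat Q\bigl(Y,\hat P\,\hat P^{-1}[\hat PX,X]\bigr).
\]
Hence $\nabla_{X^\ast}X^\ast$ has tangential part $\pm(\hat P^{-1}[X,\hat PX])^\ast$. Summing over $\mu$ yields the claimed expression. The $\hat{\mathfrak h}$-components are irrelevant, since they project to zero in $\hat{\mathfrak n}$.

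\textbf{Main obstacle.} The hardest part is getting the signs and bracket conventions consistent, since action fields are anti-homomorphisms. A second difficulty is justifying that the domain contribution $d\psi(\nabla_{e_i}e_i)^{\tan}$ drops out. For that we would invoke the analogous computation on $G$, which gives $\sum_\mu[E_\mu,P_tE_\mu]$. Since the frame $E_\mu$ is orthonormal, the sum is independent of basis. Choosing $E_\mu$ as $P_t$-eigenvectors gives $\sum_\mu\lambda_\mu[E_\mu,E_\mu]=0$, so this term vanishes, which completes the argument.
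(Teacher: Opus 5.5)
The paper gives no proof of this formula; it is quoted from Püttmann--Siffert \cite{MR4000241}. Your route is the natural derivation and is correct in substance: reduce ${}^{\psi}\nabla_{E_\mu^\ast}(d\psi E_\mu^\ast)$ to $\nabla_{X^\ast}X^\ast$ with $X=dA_eE_\mu$, evaluate it via the Killing property and $\operatorname{ad}$-invariance of $\hat Q$, and show that the domain term has no tangential part. As written, however, it does not yet prove the statement, for two reasons.

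First, the sign is determined and must not be left as $\pm$. Take $X^\ast(p)=\tfrac{d}{ds}\vert_{s=0}\exp(sX)\cdot p$, so that $[X^\ast,Y^\ast]=-[X,Y]^\ast$. Let $Y\in\hat{\mathfrak n}$, $\hat P=\hat P_{r(t)}$, and let $\hat PX$ mean $\hat P$ applied to the $\hat{\mathfrak n}$-component of $X$. Then
\[
\langle\nabla_{X^\ast}X^\ast,Y^\ast\rangle=\langle[X^\ast,Y^\ast],X^\ast\rangle=-\hat Q([X,Y],\hat PX)=\hat Q(Y,[X,\hat PX])=\langle Y^\ast,(\hat P^{-1}[X,\hat PX])^\ast\rangle ,
\]
which is exactly the $+$ sign of the statement.

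Second, your justification of the first equality is wrong. In the Koszul formula
\[
2\langle\nabla_{X^\ast}X^\ast,Y^\ast\rangle=2X^\ast\langle X^\ast,Y^\ast\rangle-Y^\ast\lvert X^\ast\rvert^2-2\langle[X^\ast,Y^\ast],X^\ast\rangle
\]
the term $Y^\ast\lvert X^\ast\rvert^2$ is not zero. The function $\lvert X^\ast\rvert^2$ varies along the orbit, and $\nabla_{X^\ast}X^\ast=-\tfrac12\operatorname{grad}\lvert X^\ast\rvert^2$ carries the entire answer. What is true is the following. Since $Y^\ast$ is Killing, $Y^\ast\lvert X^\ast\rvert^2=2\langle[Y^\ast,X^\ast],X^\ast\rangle$, so it cancels the last term. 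Since $X^\ast$ is Killing, $X^\ast\langle X^\ast,Y^\ast\rangle=\langle X^\ast,[X^\ast,Y^\ast]\rangle$. A quicker route is $\langle\nabla_{X^\ast}X^\ast,Y^\ast\rangle=-\langle\nabla_{Y^\ast}X^\ast,X^\ast\rangle=-\langle\nabla_{X^\ast}Y^\ast,X^\ast\rangle+\langle[X^\ast,Y^\ast],X^\ast\rangle$, where the middle term vanishes because $Y^\ast$ is Killing.

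Your final treatment of the domain term is correct, provided the $P_t$-eigenvectors are normalized to be $g_t$-orthonormal. That is, take $E_\mu=\lambda_\mu^{-1/2}F_\mu$ with $(F_\mu)$ a $Q$-orthonormal eigenbasis. Basis independence holds because $\sum_\mu E_\mu\otimes E_\mu$ is the inverse of $g_t$. Equivalently, $\sum_\mu\langle\nabla_{E_\mu^\ast}E_\mu^\ast,Y^\ast\rangle=\tr(\pi_{\mathfrak n}\circ\operatorname{ad}_Y\vert_{\mathfrak n})=0$, since $\operatorname{ad}_Y$ is $Q$-skew. This argument needs the frame to be orthonormal for the domain metric, which is why the target sum $\sum_\mu[dA_eE_\mu,\hat P_{r(t)}dA_eE_\mu]$ does not vanish.

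Drop the fallback that ``$A$ maps $\mathfrak n$-brackets compatibly''. The map $dA_e$ neither sends $\mathfrak n$ into $\hat{\mathfrak n}$ nor intertwines $P_t$ with $\hat P_{r(t)}$, so it could not turn a nonzero domain term into the stated formula.

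Finally, replace ``$G$-invariant extensions'' (Killing fields are not $G$-invariant) by two observations. Since $\nabla d\psi$ is tensorial, $(\nabla d\psi)(E_\mu^\ast,E_\mu^\ast)$ may be computed at $\gamma(t)$ using the Killing fields $E_\mu^\ast$ themselves. Since $d\psi(E_\mu^\ast)=(dA_eE_\mu)^\ast\circ\psi$ holds identically, not just at $\gamma(t)$, you get ${}^{\psi}\nabla_{E_\mu^\ast}(d\psi E_\mu^\ast)=\nabla_{(dA_eE_\mu)^\ast}(dA_eE_\mu)^\ast$.
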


Recall form (\ref{metric-c}) that the metric on a cohomogeneity one manifold is of the form 
\begin{align*}
    g=g_t+dt^2,
\end{align*}
where $g_t$ is a one-parameter family of $G$-invariant metrics $g_t$ on the principal orbit $G/H$.
We now consider conformal changes of the metric $g_t$ of the domain metric and obtain the following Lemma, see \cite{S26}:

\begin{Lem}
Let $(M,g)$ be a cohomogeneity one manifold, with
\begin{align*}
    g=g_t+dt^2,
\end{align*}
where $g_t$ is a one-parameter family of $G$-invariant metrics $g_t$ on the principal orbit $G/H$.
Further, let $\psi:(M,dt^2+\exp(2\alpha(t))g_t)\rightarrow (M,g)$ be the equivariant map given by 
\begin{align*}
\psi(g\cdot \gamma(t))=g\cdot {\gamma}(r(t)),
\end{align*}
where $r:(M/G)^{\circ}\rightarrow \mathbb{R}$ which in addition satisfies boundary conditions in cases (3) and (4).
Then we have
\begin{gather*}
  \tau^{\nor}_{\vert\gamma(t)} = \ddot r(t) + \tfrac{1}{2}\dot r(t) \tr P_t^{-1}\dot P_t+\dim(\mathfrak{n})\dot \alpha(t)\dot r(t)
    - \tfrac{1}{2} \tr P_t^{-1} (\dot P)_{r(t)}.
\end{gather*}
\end{Lem}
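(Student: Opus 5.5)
The plan is to apply Theorem~\ref{normaltwo} to the map $\psi$, where the domain metric is $\tilde g = dt^2+\exp(2\alpha(t))g_t$, the group homomorphism is $A=\mathrm{id}_G$, and the target is $(M,g)$ with the same normal geodesic $\hat\gamma=\gamma$. Three ingredients enter that formula: the mean curvature term of the domain orbits, the coefficients $c_{\mu,i}$, and the target data $P_{i,j}$. I will identify each of them for this choice of data and then substitute.

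\textbf{Domain term.} The metric endomorphism of $\tilde g$ is $\tilde P_t=e^{2\alpha(t)}P_t$. Differentiating gives
\begin{align*}
\tilde P_t^{-1}\dot{\tilde P}_t = P_t^{-1}\dot P_t + 2\dot\alpha(t)\,\mathrm{Id}_{\mathfrak n}.
\end{align*}
Taking the trace, $\tfrac12\dot r\,\tr \tilde P_t^{-1}\dot{\tilde P}_t=\tfrac12 \dot r\,\tr P_t^{-1}\dot P_t+\dim(\mathfrak n)\dot\alpha\dot r$. This accounts for the first two non-$\ddot r$ terms of the claimed formula. The identity $-\tr S = \tfrac12\tr P^{-1}\dot P$, which is used implicitly in Theorem~\ref{normaltwo}, applies here verbatim with $\tilde P$ in place of $P$.

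\textbf{Target term.} Take $E_\mu$ so that the $E_\mu^\ast$ are $\tilde g$-orthonormal at $\gamma(t)$. Concretely, $E_\mu=e^{-\alpha(t)}\hat E_\mu$, where $\hat E_\mu^\ast$ is $g$-orthonormal at $\gamma(t)$, and I choose $f_i=\hat E_i^\ast$. Since $dA_e=\mathrm{id}$, we get $c_{\mu,i}=e^{-\alpha}\delta_{\mu i}$. The sum then becomes
\begin{align*}
\sum_{\mu,i,j}c_{\mu,i}c_{\mu,j}P_{i,j}=e^{-2\alpha}\tr\bigl(P_t^{-1}(\dot P)_{r(t)}\bigr).
\end{align*}

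\textbf{Main obstacle.} Naively, this yields a factor $e^{-2\alpha(t)}$ in front of $\tfrac12\tr P_t^{-1}(\dot P)_{r(t)}$, whereas the lemma as stated has none. I would check carefully which normalization of the conformal factor is intended. The stated formula is exactly what one obtains if the conformal change acts compatibly with the orthonormal frame used in Theorem~\ref{normaltwo}. Equivalently, it is what one obtains if the tension field is computed for the metric scaled so that the orbit frame is unchanged and only the volume density changes by $e^{\dim(\mathfrak n)\alpha}$. In that reading, the $\alpha$-dependence appears solely through the divergence term $\dim(\mathfrak n)\dot\alpha\dot r$, as in a weighted Laplacian.

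I would first try to verify the formula via the divergence form of the tension field. Writing $\tau^{\nor}$ as $\mathrm{vol}^{-1}\partial_t(\mathrm{vol}\,\dot r)$ plus the orbit-energy gradient, the volume density is multiplied by $e^{\dim(\mathfrak n)\alpha}$, and this produces precisely the extra term. I would then reconcile the orbit term with the convention of \cite{S26}. If the factor $e^{-2\alpha}$ genuinely survives, I would record it as the correct coefficient; otherwise the lemma follows immediately by substituting the three computed pieces into Theorem~\ref{normaltwo}.
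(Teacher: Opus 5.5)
Your three computations are correct, and the obstacle you flag is not a matter of normalization: the factor survives. The frame $E_1^\ast,\dots,E_n^\ast$ in Theorem~\ref{normaltwo} has to be orthonormal for the \emph{domain} metric $dt^2+e^{2\alpha(t)}g_t$. So $E_\mu=e^{-\alpha(t)}\hat E_\mu$ is forced, $c_{\mu,i}=e^{-\alpha(t)}\delta_{\mu i}$, and what your argument actually proves is
\begin{gather*}
\tau^{\nor}_{\vert\gamma(t)}=\ddot r(t)+\tfrac12\dot r(t)\tr P_t^{-1}\dot P_t+\dim(\mathfrak n)\dot\alpha(t)\dot r(t)-\tfrac12\,e^{-2\alpha(t)}\tr P_t^{-1}(\dot P)_{r(t)}.
\end{gather*}
This differs from the stated formula wherever $\alpha(t)\neq 0$ and $\tr P_t^{-1}(\dot P)_{r(t)}\neq 0$, so the lemma as printed is false. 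The paper gives no derivation of its own; it only points to \cite{S26}. The same missing factor $e^{-2\alpha_i(t)}$ reappears in (\ref{bvp}) and hence in (\ref{eq-a}).

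The simplest check is $M=\mathbb R^2$, $g=dt^2+t^2d\theta^2$, $\psi(t,\theta)=(r(t),\theta)$. The energy of $\psi$ with respect to $dt^2+e^{2\alpha}t^2d\theta^2$ is $\pi\int(\dot r^2+e^{-2\alpha}r^2/t^2)\,e^{\alpha}t\,dt$. Its Euler--Lagrange equation is $\ddot r+(\tfrac1t+\dot\alpha)\dot r-e^{-2\alpha}r/t^2=0$, whereas the lemma would put $-r/t^2$ in the last term.

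The gap in your proposal is the final paragraph. You leave the conclusion conditional and propose two readings under which the printed formula would come out, but neither computes the tension field of $\psi:(M,dt^2+e^{2\alpha}g_t)\to(M,g)$.
\begin{enumerate}
\item Using a $g$-orthonormal frame instead of a $\tilde g$-orthonormal one simply violates the hypothesis of Theorem~\ref{normaltwo}.
\item The weighted-volume reading yields the Euler--Lagrange equation of $\int_M e^{\dim(\mathfrak n)\alpha}\lvert d\psi\rvert_g^2\,v_g$, i.e.\ the $f$-harmonic equation $\tau(\psi)+d\psi(\nabla\log f)=0$ on $(M,g)$ with $f=e^{\dim(\mathfrak n)\alpha}$. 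Its normal component is exactly the printed formula, but it is a different variational problem. Deforming the orbit metric rescales the volume density by $e^{\dim(\mathfrak n)\alpha}$, and it also rescales the orbit part of $\lvert d\psi\rvert^2$ by $e^{-2\alpha}$. The latter is what produces the factor, and your planned divergence-form check would show exactly this.
\end{enumerate}
So the honest conclusion of your argument is not that the lemma ``follows immediately''. Rather, the lemma must be corrected to the display above, or else restated for the weighted energy. Equivalently, the last term is $-\tfrac12\tr\tilde P_t^{-1}(\dot P)_{r(t)}$ with $\tilde P_t=e^{2\alpha(t)}P_t$.
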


\subsection{Harmonic self-maps of spheres}
This subsection presents several results regarding harmonic self-maps of spheres. In most paragraphs, the presentation of this subsection adheres closely to that in \cite{MR4000241}, some sentences are copied almost word for word.

\medskip

It is evident that we examine case (3) in this subsection.
Let $M$ be a Riemannian manifold on which a compact Lie group $G$ acts isometrically via $G\times M\to M$. Assume that the orbit space $M/G$ is isometric to the closed interval $[0,L]$ and that the action has a finite Weyl group $W$.
P\"uttmann constructed an infinite family of equivariant self-maps of $M$ by mapping $g\cdot\gamma(t)$ to $g\cdot\gamma(kt)$, as detailed in \cite{MR2480860}. 
In this context, $\gamma$ is a normal geodesic with unit speed, and $\gamma(0)$ lies within one of the non-principal orbits. 
The integer $k$ can be expressed as $j\abs{W}/2+1$, where $j\in 2\mathbb{Z}$ (and odd integers may also be permissible depending on the action). The map $g\cdot\gamma(t) \mapsto g\cdot\gamma(kt)$ is referred to as the {\em $k$-map} of $M$.
A $k$-map's degree is $k$ when the codimensions of the non-principal orbits are both odd; otherwise, it is $0$ or $\pm 1$.
With $M$ given, a natural question arises as to whether any of the $k$-maps are harmonic. More generally, we examine the equivariant {\em $(k,r)$-maps}, which are defined as the maps 
\begin{align*} 
g\cdot\gamma(t)\mapsto g\cdot\gamma(r(t)) 
\end{align*} 
where $r: [0,L] \to \mathbb{R}$ is a smooth function satisfying $r(0) = 0$ and $r(L) = kL$. 
 It is evident that any $(k,r)$-map is equivariantly homotopic to the corresponding $k$-map. A $(k,r)$-map of $M$ is harmonic if its tension field $\tau$ is zero. The tension field divides into two natural components: the component $\tau^{\tan}$ that is tangential to the orbits, and the component $\tau^{\nor}$ that is perpendicular to the orbits.

\smallskip

So far, all considerations of this subsection apply to any cohomogeneity one manifold with $M/G=[0,L]$. Starting now, we concentrate on round spheres.
Hsiang and Lawson \cite{MR298593} classified the cohomogeneity one actions on spheres. They demonstrated that every one of these actions is orbit equivalent to the isotropy representation of a rank-$2$ Riemannian symmetric space.
Any isometric cohomogeneity one action $G\times \s^{n+1} \to \s^{n+1}$ produces orbits that form an isoparametric foliation on the sphere. Takagi and Takahashi \cite{MR334094} determined the number $g$ of distinct principal curvatures of the orbits, along with their multiplicities $m_0,\ldots,m_{g-1}$. It turned out that \begin{gather*}
  m_0 = m_2 = \ldots = m_{g-2} \quad \text{and} \quad m_1 = m_3 = \ldots = m_{g-1}. 
  \end{gather*}
Specifically, $n = \frac{m_0 + m_1}{2} g$. M\"unzner \cite{MR583825} demonstrated later that this identity holds for all isoparametric foliations of spheres. Only actions with the following $(g,m_0,m_1)$ exist up to the ordering of $m_0$ and $m_1$:
\begin{multline*}
 (1,m,m), (2,m_0,m_1), (3,1,1), (3,2,2), (3,4,4), (3,8,8),\\ (4,m_0,1), (4,2,2), (4,2,2\ell+1), (4,4,4\ell+3), (4,4,5), (4,6,9), (6,1,1), (6,2,2).
\end{multline*}
According to the classification, all cohomogeneity one actions with the specified data $(g,m_0,m_1)$ are orbit equivalent, except for the case of $(4,2,1)$ where two distinct classes exist. A detailed list can be found in \cite{MR2406265}.

\smallskip

A cohomogeneity one action on a sphere characterized by the data $(g,m_0,m_1)$ is referred to as a {\em $(g,m_0,m_1)$-action}. If $m_0 = m_1 =: m$ (which, according to M\"unzner's results, must hold for odd $g$), we refer to the action as a $(g,m)$-action. 

\smallskip

As mentioned above, the tension field divides into two natural components: the component $\tau^{\tan}$ that is tangential to the orbits, and the component $\tau^{\nor}$ that is perpendicular to the orbits.
For $(g,m_0,m_1)$-actions the normal component of the tension field of a $(k,r)$-map has been provided in \cite{MR4000241}:

\begin{Thm}[Theorem\,E in \cite{MR4000241}]
Given a $(g,m_0,m_1)$-action on a sphere $\s^{n+1}$ with $n = \frac{m_0+m_1}{2}g$, the normal component of the tension field of a $(k,r)$-map vanishes if and only if $r$ satisfies the {\em $(g,m_0,m_1,k)$-boundary value problem}
\begin{multline}
\label{bvp-t}
  0 =
  4\sin^{2} gt \cdot \ddot r(t) + \bigl( g(m_0+m_1)\sin 2gt + 2g(m_0-m_1)\sin gt \bigr)\dot r(t)\\
  - g(g -2)\sin 2(r(t)-t ) \bigl( m_0+m_1 + (m_0-m_1)\cos gt \bigr)\\
  -2g \sin\bigl(2(r(t)-t )+gt\bigr) \bigl( (m_0+m_1)\cos gt +m_0-m_1\bigr)
\end{multline}
for functions $r: \,]0,\frac{\pi}{g}[\, \to \R$ with
\begin{gather*}
  \lim_{t\rightarrow 0}r(t)=0 \quad \text{and}\quad \lim_{t\rightarrow \tfrac{\pi}{g}}r(t)=k\tfrac{\pi}{g}
\end{gather*}
\end{Thm}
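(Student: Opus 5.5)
The plan is to specialize Theorem~\ref{normaltwo} to the case $M=N=\s^{n+1}$, $G=K$, and $A=\mathrm{id}$, with $\hat\gamma=\gamma$ a unit-speed normal geodesic. In this setting $c_{\mu,i}=\delta_{\mu i}$ once the $E_\mu$ and the $F_i$ are chosen compatibly. The normal component then reduces to
\begin{gather*}
\tau^{\nor}_{\vert\gamma(t)}=\ddot r+\tfrac12\dot r\,\tr P_t^{-1}\dot P_t-\tfrac12\tr P_t^{-1}(\dot P)_{r(t)}.
\end{gather*}
Everything therefore comes down to computing the metric endomorphisms $P_t$ explicitly for a $(g,m_0,m_1)$-action.

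\textbf{Geometric input.} I would use the structure of isoparametric hypersurfaces (Münzner, Takagi--Takahashi). Along $\gamma$, the tangent space of the principal orbit splits into $g$ curvature distributions $E_0,\dots,E_{g-1}$ with $\dim E_j=m_j$, where $m_j=m_0$ for $j$ even and $m_j=m_1$ for $j$ odd. The focal points of the orbits along $\gamma$ sit at $t=j\pi/g$, and $E_j$ collapses at $t=j\pi/g$ (modulo $\pi$). Action fields $X^\ast$ with $X$ in the $\hat Q$-orthogonal preimage of $E_j$ are Jacobi fields along $\gamma$ vanishing at the corresponding focal point. Hence on this block
\begin{gather*}
\langle X^\ast,X^\ast\rangle_{\vert\gamma(t)}=c_X\sin^2\!\bigl(t-\tfrac{j\pi}{g}\bigr),
\end{gather*}
so $P_t$ acts as the scalar $c_j\sin^2(t-j\pi/g)$ on the $j$-th block. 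I would check that the action fields indeed diagonalize compatibly with the $E_j$, which uses that the orbits are homogeneous and that the shape operator commutes with the Jacobi-field transport.

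\textbf{Formula for each $j$.} Substituting the block form gives
\begin{gather*}
\tau^{\nor}=\ddot r+\sum_{j=0}^{g-1}m_j\cot\bigl(t-\tfrac{j\pi}{g}\bigr)\dot r-\sum_{j=0}^{g-1}m_j\frac{\sin\bigl(2(r-\tfrac{j\pi}{g})\bigr)}{2\sin^2\bigl(t-\tfrac{j\pi}{g}\bigr)}.
\end{gather*}

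\textbf{Main obstacle: the trigonometric sums.} The hard step is summing over $j$, separately over even and odd $j$, to reach the closed form in (\ref{bvp-t}). For the first-order term I would use the identity $\sum_{j=0}^{g-1}\cot(x-j\pi/g)=g\cot gx$ and its half-step analogue for the even indices; this produces $\tfrac{g}{4\sin^2 gt}\bigl((m_0+m_1)\sin 2gt+2(m_0-m_1)\sin gt\bigr)$. For the last term I would write $\sin(2(r-\theta_j))=\sin(2(r-t))\cos(2(t-\theta_j))+\cos(2(r-t))\sin(2(t-\theta_j))$. This reduces the problem to the sums $\sum_j\cot(x-\theta_j)$ and $\sum_j\csc^2(x-\theta_j)=g^2\csc^2 gx$ over full and alternate residues. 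Collecting the terms and multiplying by $4\sin^2 gt$ should give exactly the two $\sin 2(r-t)$ and $\sin(2(r-t)+gt)$ terms with the coefficients $g(g-2)$ and $2g$. I expect the bookkeeping of the $m_0$/$m_1$ alternation, which produces the $(m_0-m_1)\cos gt$ corrections, to be the error-prone part. I would first verify the result against the known case $g=1$, which should give the classical ODE for maps between spheres.

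\textbf{Boundary conditions.} The conditions $r\to0$ and $r\to k\pi/g$ follow from the requirement that the $(k,r)$-map be a well-defined continuous map through the non-principal orbits at $t=0$ and $t=\pi/g$, namely the definition of a $(k,r)$-map with $L=\pi/g$. Since $\tau^{\nor}$ equals $(4\sin^2 gt)^{-1}$ times the right-hand side of (\ref{bvp-t}) on the interior, it vanishes if and only if $r$ solves the stated problem.
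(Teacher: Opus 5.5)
Your proposal is correct. The paper gives no proof of this statement; it imports it as Theorem~E of \cite{MR4000241}. So your argument is a derivation the text does not contain, not a variant of one it does.

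Your route is the natural one. You specialize Theorem~\ref{normaltwo} to $A=\mathrm{id}$, $F_i=E_i$ (so $c_{\mu,i}=\delta_{\mu i}$), and use that action fields are Jacobi fields with profile $\sin(t-\theta_j)$, $\theta_j:=j\pi/g$, on the curvature distributions. The bookkeeping you were worried about checks out. Put $C(t):=\sum_j m_j\cot(t-\theta_j)=\tfrac g2(m_0+m_1)\cot gt+\tfrac g2(m_0-m_1)\csc gt$; for even $g$ this follows from $\cot\tfrac x2=\cot x+\csc x$, and for odd $g$ one has $m_0=m_1$. Then $\sum_j m_j\csc^2(t-\theta_j)=-\dot C(t)$. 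With these, your splitting of $\sin 2(r-\theta_j)$ gives exactly $4\sin^2 gt\cdot\tau^{\nor}=$ the right-hand side of (\ref{bvp-t}), and $g=1$ reduces to $\ddot r+m\cot t\,\dot r-\frac{m\sin 2r}{2\sin^2 t}=0$.

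Two points need tightening.

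First, the step you left as ``I would check'' follows from the $G$-invariance of $\partial_t$. Since $[X^\ast,\partial_t]=0$, you get $\nabla_{\partial_t}X^\ast=\nabla_{X^\ast}\partial_t$, which is (up to sign) the shape operator applied to $X^\ast$. The curvature distributions are parallel along $\gamma$ with principal curvatures $\pm\cot(t-\theta_j)$. Hence an action field with $X^\ast(\gamma(t_0))\in E_j$ equals $\frac{\sin(t-\theta_j)}{\sin(t_0-\theta_j)}$ times the parallel transport of $X^\ast(\gamma(t_0))$.

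Second, $P_t$ need not literally be a scalar on the $j$-th block. For example, when $\alpha$ and $2\alpha$ are both restricted roots, the two pieces carry different constants. What is true is $P_t=P_{t_0}\circ D_t$, with $D_t$ acting on the $j$-th block as $\sin^2(t-\theta_j)/\sin^2(t_0-\theta_j)$. This suffices, because only $P_t^{-1}\dot P_t$ and $P_t^{-1}(\dot P)_{r(t)}$ enter the formula.
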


Note that if $m_0 = m_1 = m$ is true, the ODE in the preceding theorem simplifies to
\begin{multline}
\label{ode}
0 = 2\sin^2 gt \cdot\ddot r(t)
  + mg\sin 2gt \cdot\dot r(t) \\ -mg\big((g-1)\sin 2(r(t)-t) + \sin 2(r(t)+(g-1)t)\big).
\end{multline}
Below we refer to this ODE as \textit{(g,m)-ODE}.

\smallskip

For $(g,m_0,m_1)$-actions the following result for the tangential component of the tension field of a $(k,r)$-map has been provided in \cite{MR4000241}:

\begin{Thm}[see Theorem\,H in \cite{MR4000241}]
Given a $(g,m_0,m_1)$-action on $\s^{n+1}$ the tangential component of the tension field of any $(k,r)$-map vanishes except possibly for
\begin{gather*}
  (g,m_0,m_1) \in \{ (4,2,2\ell+1), (4,4,4\ell+3), (4,4,5), (4,6,9) \}.
\end{gather*}
\end{Thm}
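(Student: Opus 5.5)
The plan is to use Theorem~\ref{tanaltpart} with $A=\mathrm{id}$, since for a $(k,r)$-map the homomorphism is the identity of $G$. The tangential component then vanishes exactly when
\begin{align*}
\sum_{\mu=1}^n [E_\mu,\hat P_{r(t)}E_\mu]\in\ah \quad\text{for all } t,
\end{align*}
where $E_1,\dots,E_n\in\mathfrak n$ are chosen so that the $E^{\ast}_\mu$ are $g_t$-orthonormal at $\gamma(t)$. The strategy is to find a $t$-independent, $Q$-orthogonal decomposition $\mathfrak n=\mathfrak n_1\oplus\dots\oplus\mathfrak n_s$ on which every $P_t$ acts by a scalar $p_j(t)$ on $\mathfrak n_j$. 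Given such a decomposition, we take each $E_\mu$ in a single summand $\mathfrak n_j$ and rescale it by $p_j(t)^{-1/2}$. Then $\hat P_{r(t)}E_\mu=p_j(r(t))E_\mu$, so every bracket $[E_\mu,\hat P_{r(t)}E_\mu]=p_j(r(t))[E_\mu,E_\mu]$ is zero. Hence $\tau^{\tan}=0$ for every $r$.

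\textbf{Step 1 (s-representations).} Suppose the action is the isotropy representation of a rank-$2$ symmetric space $G'/K$ with $\mathfrak g'=\mathfrak k\oplus\mathfrak p$, acting on the unit sphere of $\mathfrak p$. Choose $\gamma(t)=\cos t\,e_1+\sin t\,e_2$ in a maximal abelian subspace $\mathfrak a\subset\mathfrak p$. Then $\mathfrak n=\bigoplus_{\lambda}\mathfrak k_\lambda$ is the $Q$-orthogonal sum of the restricted root spaces, and these do not depend on $t$. For $X\in\mathfrak k_\lambda$ the action field is $X^{\ast}_{\gamma(t)}=[X,\gamma(t)]$, and its norm squared is $\lambda(\gamma(t))^2Q(X,X)$. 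Moreover, action fields coming from different root spaces land in different root spaces $\mathfrak p_\lambda$ of $\mathfrak p$, so they are orthogonal. Therefore $P_t$ is diagonal with eigenvalue $\lambda(\gamma(t))^2$ on $\mathfrak k_\lambda$, and the argument above applies. By Hsiang--Lawson \cite{MR298593}, this already covers every $(g,m_0,m_1)$ for which the acting group can be taken to be the full $K$.

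\textbf{Step 2 (the remaining actions).} By the classification there are also actions by proper subgroups $G\subset K$ that are only orbit equivalent to an s-representation. Examples are the groups transitive on spheres that act for $g=1,2$, and $\mathrm{G}_2\subset\mathrm{SO}(7)$. For these I would restrict the decomposition from Step~1 and use Schur's lemma. Each $P_t$ is $\mathrm{Ad}_H$-equivariant, and $P_t$ is obtained from the ambient $K$-metric restricted to $\mathfrak n_G\subset\mathfrak n_K$ (modulo the larger isotropy). So whenever $\mathfrak n_G$ splits into pairwise inequivalent irreducible $H$-modules, each $P_t$ is scalar on every module for all $t$, and we again get $\tau^{\tan}=0$.

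\textbf{Main obstacle.} The hard part is this case-by-case check. One must verify, for every non-s-representation action outside the list, either that the $H$-modules in $\mathfrak n$ are pairwise inequivalent, or that the equivalent ones are nevertheless mapped to $P_t$-orthogonal pieces in a $t$-independent way. I expect this to fail exactly for $(4,2,2\ell+1)$, $(4,4,4\ell+3)$, $(4,4,5)$ and $(4,6,9)$. There two equivalent $H$-modules are mixed by $P_t$ with $t$-dependent off-diagonal terms, so $P_t$ and $P_{r(t)}$ need not commute and the bracket sum can leave $\ah$. This explains why those cases are excluded from the statement rather than handled by the argument.
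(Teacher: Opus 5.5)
The paper does not prove this statement; it quotes Theorem~H of \cite{MR4000241}, so your argument has to be judged on its own. Your reduction is correct: if every $P_t$ is scalar on the pieces of a $t$-independent $Q$-orthogonal splitting of $\mathfrak n$, then every bracket $[E_\mu,P_{r(t)}E_\mu]$ vanishes. Step~1 is also correct.

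\textbf{The gap is Step~2.} You do not carry it out, and the criterion you plan to check there fails for actions the statement covers. Take $\mathrm{SO}(2)\times\mathrm{G}_2$ acting on $\R^2\otimes\R^7$, with $\R^7=\mathrm{Im}\,\mathbb O$. It is orbit equivalent to $\mathrm{SO}(2)\times\mathrm{SO}(7)$, so it is a $(4,5,1)$-action.
\begin{itemize}
\item Put $\gamma(t)=f_1\otimes\cos t\,e_1+f_2\otimes\sin t\,e_2$ and $e_3=e_1e_2$, and let $\R^4$ be the orthogonal complement of the associative plane spanned by $e_1,e_2,e_3$.
\item The principal isotropy has Lie algebra $\mathfrak{su}(2)$, the stabiliser of $e_1,e_2$. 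Its stabiliser in $\mathfrak g_2$ is $\mathfrak{so}(4)=\mathfrak{su}(2)\oplus\mathfrak{su}(2)'$.
\item Let $J$ generate $\mathfrak{so}(2)$ and let $V$ be the $8$-dimensional complement of $\mathfrak{so}(4)$ in $\mathfrak g_2$. Then $\mathfrak n=\R J\oplus\mathfrak{su}(2)'\oplus V$, and $V$ is $P_t$-invariant.
\item Write $v\in V$ as $(x,y)=(ve_1,ve_2)\in\R^4\oplus\R^4$. This is a bijection. Then $g_t(v^*,v^*)=\cos^2t\,|x|^2+\sin^2t\,|y|^2$.
\item Since $ve_3=xe_2+e_1y$, the form $Q(v,v)$ is a positive multiple of $|x|^2+|y|^2+\langle xe_2,e_1y\rangle$, and the cross term is nondegenerate.
\end{itemize}
So $Q$ is not block diagonal in $(x,y)$, and the operators $P_t|_V$ for different $t$ do not commute. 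There is therefore no $t$-independent eigenbasis, for any bi-invariant $Q$, since $Q$ is unique up to scale on the simple factor $\mathfrak g_2$. Yet $\tau^{\tan}=0$ for this action. The simultaneous-diagonalisation criterion is sufficient but not necessary, and a case check along your lines cannot be completed.

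\textbf{The missing idea makes Step~2 unnecessary.} Let $G\subset K$ be orbit equivalent, with the same normal geodesic $\gamma$.
\begin{itemize}
\item The principal isotropy group $K_{\gamma(t)}$ fixes $\gamma$ pointwise, so $k\gamma(t)\mapsto k\gamma(r(t))$ is well defined.
\item Because $G\subset K$, this map agrees with $g\gamma(t)\mapsto g\gamma(r(t))$.
\item So the two $(k,r)$-maps are literally the same map and have the same tension field, whichever group is fed into Theorem~\ref{tanaltpart}.
\end{itemize}
By Hsiang--Lawson every cohomogeneity one action on a sphere is orbit equivalent to an s-representation, so Step~1 alone proves the statement.

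Your ``main obstacle'' paragraph also contradicts your own Step~1. The excluded data $(4,2,2\ell+1)$, $(4,4,4\ell+3)$, $(4,4,5)$, $(4,6,9)$ are realized by the isotropy representations of $\mathrm{SU}(\ell+4)/\mathrm{S}(\mathrm U(2)\times\mathrm U(\ell+2))$, $\mathrm{Sp}(\ell+4)/\mathrm{Sp}(2)\times\mathrm{Sp}(\ell+2)$, $\mathrm{SO}(10)/\mathrm U(5)$ and $E_6/\mathrm U(1)\cdot\mathrm{Spin}(10)$. Restricted root spaces diagonalise $P_t$ there exactly as in Step~1. Completed this way, your argument gives $\tau^{\tan}=0$ for those data as well, which a fortiori proves the statement as quoted.
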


For $g=1$ and $2\leq m\leq 5$, infinitely many solutions of the $(1,m,\pm1)$-boundary value problem have been provided in \cite{MR1436833}.
For $g=2$ and $2\leq m_0\leq 5$, $m_0\leq m_1$, infinitely many solutions of the $(2,m_0,m_1,\pm1)$-boundary value problem have been provided in \cite{MR3427685}.

\section{Construction of harmonic maps between spheres}
\label{sec-sph}
In this section we construct several harmonic self-maps of round spheres. Throughout we are dealing with the case $m_0=m_1=m$ only.

\subsection{Preparations} 
This subsection gathers various preparations necessary for later proving the existence of multiple harmonic self-maps of round spheres.

\subsubsection{The variable $x$}
For the subsequent considerations, it is useful to extend the ends of the interval $(0,\tfrac{\pi}{g})$ to $-\infty$ and $\infty$, respectively.
We achieve this by defining the variable $x:=\log(\tan \tfrac{gt}{2})$.
In terms of $x$, the $(g,m)$-boundary value problem is given by \begin{multline} \label{bvp-x} r''(x)=(m-1)\tanh x\,r'(x)\\+\tfrac{m}{2g}\left[(g\!-\!1)\sin\big(2r(x)\!-\!2a(x)\big)+\sin\big(2r(x)\!+\!2(g\!-\!1)a(x)\big)\right]
\end{multline}
with $\lim_{x\rightarrow -\infty}r(x)=0$ and
$\lim_{x\rightarrow\infty}r(x)=\tfrac{(kg\!+\!1)\pi}{g}$, $k\in\Z$,
where
\begin{align*}
a:\R\rightarrow\R,\quad x\mapsto\tfrac{2}{g}\arctan\exp x.
\end{align*} 
The linear solutions $r(t)=t$ and $r(t)=-(g-1)t$ of the boundary value problem (\ref{bvp-t}) correspond to the solutions $r(x)=a(x)$ and $r(x)=-(g-1)a(x)$ in the boundary value problem (\ref{bvp-x}), respectively.
We also call the latter \lq linear solutions\rq\, of (\ref{bvp-x}) by an abuse of notation. 

\subsubsection{The function $W$.}
\label{wr}
\label{hi} 
Throughout this paper we make use of the function $W$, which plays the role of a Lynapunov function.

\smallskip

For any solution $r$ of the $(g,m)$-ODE define $W:\R\rightarrow\R$ by
\begin{align*}
W(x)=\tfrac{1}{2}r'(x)^2&-\tfrac{m}{2g}\big((g\!-\!1)\cos 2a(x)+\cos2(g\!-\!1)a(x)\big)\sin^2r(x)\\
&-\tfrac{m}{2g}\big(-(g\!-\!1)\sin2a(x)+\sin2(g\!-\!1)a(x)\big)\sin^2\big(r(x)\!-\!\tfrac{3\pi}{4}\big).
\end{align*}
Plugging in the $(g,m)$-ODE yields
\begin{align*}
W'(x):=\tfrac{d}{dx}W(x)=&(m\!-\!1)\tanh(x)r'(x)^2\\&+\tfrac{m(g\!-\!1)}{g^2\cosh x}\big(\sin2a(x)+\sin2(g\!-\!1)a(x)\big)\sin^2r(x)\\
&-\tfrac{m(g\!-\!1)}{g^2\cosh x}\big(-\cos2a(x)+\cos2(g\!-\!1)a(x)\big)\sin^2\big(r(x)\!-\!\tfrac{3\pi}{4}\big).
\end{align*}

It is convenient to introduce $h_i:\R\rightarrow\R$, $i\in\left\{1,2\right\}$, by
\begin{align*}
&h_1(x)=(g\!-\!1)\cos2a(x)+\cos2(g\!-\!1)a(x),\\
&h_2(x)=-(g\!-\!1)\sin2a(x)+\sin2(g\!-\!1)a(x).
\end{align*}
Clearly,
$$W(x)=\tfrac{1}{2}r'(x)^2-\tfrac{m}{2g}h_1(x)\sin^2r(x)-\tfrac{m}{2g}h_2(x)\sin^2\big(r(x)\!-\!\tfrac{3\pi}{4}\big)$$
and
$$r''(x)=(m-1)\tanh x\,r'(x)+\tfrac{m}{2g}[\sin(2r(x))h_1(x)+\cos(2r(x))h_2(x)].$$

For later use we state some elementary properties of $h_1$ and $h_2$.

\begin{Lem}[Properties of $h_1$]
\label{h1}
\begin{enumerate}
\item $g=2$:
The function $h_1(\,\cdot\,)$ decreases strictly, is positive on the negative $x$-axis, zero for $x=0$ and negative on the positive $x$-axis.
\item $g=3$:
The function $h_1(\,\cdot\,)$ decreases strictly.
There exists a unique $z_0>0$ such that $h_1(z_0)=0$, $h_1(x)$ is positive for $x<z_0$ and negative for $x>z_0$.
\item $g=4$:
The function $h_1(\,\cdot\,)$ is positive and decreases strictly.
\item $g=6$:
The function $h_1(\,\cdot\,)$ is positive.
There exists a unique $x_0>0$ such that  $h_1'(x_0)=0$, $h_1'(x)$ is negative for $x<x_0$ and positive for $x>x_0$.
\end{enumerate}
\end{Lem}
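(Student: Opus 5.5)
The idea is to eliminate the variable $x$ in favour of $s:=a(x)=\tfrac{2}{g}\arctan\exp x$. Since $\tfrac{d}{dx}a(x)=\tfrac{1}{g\cosh x}>0$, the map $x\mapsto a(x)$ is a strictly increasing diffeomorphism $\R\to(0,\tfrac{\pi}{g})$, with $a(0)=\tfrac{\pi}{2g}$. So every statement about $h_1$ follows from the corresponding statement about
\begin{align*}
H(s)=(g-1)\cos 2s+\cos 2(g-1)s,\qquad s\in(0,\tfrac{\pi}{g}).
\end{align*}
Here $h_1=H\circ a$, the signs of $h_1'$ and $H'$ agree at corresponding points, and zeros or critical points at $s_*$ correspond to $x_*=\log\tan\tfrac{gs_*}{2}$. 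In particular $x_*>0$ if and only if $s_*>\tfrac{\pi}{2g}$.

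The key computation is the sum-to-product formula
\begin{align*}
H'(s)=-2(g-1)\bigl(\sin 2s+\sin 2(g-1)s\bigr)=-4(g-1)\sin(gs)\cos\bigl((g-2)s\bigr).
\end{align*}
On $(0,\tfrac{\pi}{g})$ we have $\sin(gs)>0$. Hence the sign of $H'$ is the opposite of the sign of $\cos((g-2)s)$, where $(g-2)s\in(0,\tfrac{(g-2)\pi}{g})$.

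We now treat the four values of $g$.
\begin{enumerate}
\item $g=2$: the cosine factor is $1$, so $H$ is strictly decreasing. Explicitly $H(s)=2\cos 2s$, which vanishes exactly at $s=\tfrac{\pi}{4}=\tfrac{\pi}{2g}$, i.e.\ at $x=0$. This gives the stated sign pattern.
\item $g=3$: here $(g-2)s<\tfrac{\pi}{3}$, so the cosine factor is positive and $H$ is strictly decreasing. We have $H(0)=3>0$ and $H(\tfrac{\pi}{3})=-\tfrac{3}{2}<0$, so $H$ has a unique zero $s_0$. Since $H(\tfrac{\pi}{6})=\tfrac{1}{2}>0$, we get $s_0>\tfrac{\pi}{2g}$, hence $z_0>0$.
\item $g=4$: here $(g-2)s<\tfrac{\pi}{2}$, so $H$ is strictly decreasing. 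Since $H(\tfrac{\pi}{4})=3\cos\tfrac{\pi}{2}+\cos\tfrac{3\pi}{2}=0$, the function $H$ is positive on the open interval.
\item $g=6$: here $4s\in(0,\tfrac{2\pi}{3})$, so $\cos 4s$ changes sign exactly once, at $s=\tfrac{\pi}{8}$. Thus $H'<0$ before $\tfrac{\pi}{8}$ and $H'>0$ after it. Because $\tfrac{\pi}{8}>\tfrac{\pi}{12}=\tfrac{\pi}{2g}$, the corresponding point $x_0$ is positive. For positivity, the minimum value is $H(\tfrac{\pi}{8})=5\cos\tfrac{\pi}{4}+\cos\tfrac{5\pi}{4}=2\sqrt2>0$.
\end{enumerate}

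There is no real obstacle. The only care needed is to check the endpoint and midpoint values and the location of $s_*$ relative to $\tfrac{\pi}{2g}$, since this determines the sign of the corresponding $x$.
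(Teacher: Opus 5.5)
Your proof is correct. The paper gives no proof of this lemma; it only calls these ``elementary properties'' of $h_1$. So there is no argument of the paper's to compare with, and your proposal supplies the omitted verification.

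The reduction to $H(s)=(g-1)\cos 2s+\cos 2(g-1)s$ on $(0,\tfrac{\pi}{g})$ is sound, since $a$ is an increasing diffeomorphism $\R\to(0,\tfrac{\pi}{g})$ with $a'(x)=\tfrac{1}{g\cosh x}$ and $a(0)=\tfrac{\pi}{2g}$. The factorization $H'(s)=-4(g-1)\sin(gs)\cos((g-2)s)$ then settles all four cases at once. Your numerical checks are all right:
\begin{itemize}
\item for $g=3$: $H(0)=3$, $H(\tfrac{\pi}{6})=\tfrac12$ and $H(\tfrac{\pi}{3})=-\tfrac32$;
\item for $g=4$: $H(\tfrac{\pi}{4})=0$;
\item for $g=6$: $H(\tfrac{\pi}{8})=2\sqrt2$.
\end{itemize}

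The only point you leave implicit is that the endpoint values are limits of $H$, which is continuous on the closed interval. Positivity for $g=4$, and the sign pattern for $g=3$, then follow from the mean value theorem because $H'<0$ on the open interval.

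Your route also makes explicit two things the paper does not state:
\begin{itemize}
\item For $g=2$ one gets $h_1(x)=-2\tanh x$.
\item For $g=6$ the critical point is $x_0=\log\tan\tfrac{3\pi}{8}=\log(1+\sqrt2)$. This is the point $z_1$ that the proof of Lemma~\ref{wneg2} uses without giving its value.
\end{itemize}
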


\begin{Lem}[Properties of $h_2$]
\label{h2}
The function $h_2(\,\cdot\,)$ vanishes identically. For $g\in\left\{3,4,6\right\}$ we have $h_2(\,\cdot\,)<0$ and $h_2(\,\cdot\,)$ decreases strictly.
\end{Lem}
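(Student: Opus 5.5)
The plan is to reduce everything to an elementary trigonometric function of one variable, using the substitution $s=2a(x)$. Since $a(x)=\tfrac{2}{g}\arctan\exp x$ is smooth and strictly increasing with $a(x)\to 0$ as $x\to-\infty$ and $a(x)\to\tfrac{\pi}{g}$ as $x\to\infty$, the map $x\mapsto s=2a(x)$ is a strictly increasing bijection $\R\to(0,\tfrac{2\pi}{g})$. We have $h_2(x)=H(2a(x))$, where
\begin{align*}
H(s)=-(g-1)\sin s+\sin\big((g-1)s\big).
\end{align*}
For the first claim, which concerns $g=2$ (the case left out of the second claim), we have $g-1=1$. Then $H(s)=-\sin s+\sin s=0$, so $h_2$ vanishes identically.

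For $g\in\{3,4,6\}$ I would study $H$ on $(0,\tfrac{2\pi}{g})$. Differentiating gives $H'(s)=(g-1)\big(\cos((g-1)s)-\cos s\big)$. The sum-to-product identity $\cos A-\cos B=-2\sin\tfrac{A+B}{2}\sin\tfrac{A-B}{2}$ then yields
\begin{align*}
H'(s)=-2(g-1)\sin\big(\tfrac{gs}{2}\big)\sin\big(\tfrac{(g-2)s}{2}\big).
\end{align*}
For $s\in(0,\tfrac{2\pi}{g})$ we have $\tfrac{gs}{2}\in(0,\pi)$. Since $g\geq 3$, we also have $\tfrac{(g-2)s}{2}\in\big(0,\tfrac{(g-2)\pi}{g}\big)\subset(0,\pi)$. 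Both sine factors are therefore strictly positive, so $H'(s)<0$ on the whole interval.

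Hence $H$ is strictly decreasing on $(0,\tfrac{2\pi}{g})$. Together with $H(0)=0$ and continuity of $H$ at $0$, this gives $H(s)<0$ for all $s$ in the interval. Composing with the strictly increasing map $x\mapsto 2a(x)$ shows that $h_2$ is strictly negative and strictly decreasing on $\R$. The only point requiring care is checking that both sine arguments stay in $(0,\pi)$, and this is exactly where the hypothesis $g\geq 3$ enters. I do not expect any real obstacle.
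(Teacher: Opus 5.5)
Your proof is correct. The paper gives no proof of this lemma; it only lists it among the ``elementary properties'' of $h_1,h_2$, so there is no argument to compare against.

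Your argument supplies exactly the omitted verification. The substitution $s=2a(x)$ maps $\R$ increasingly onto $(0,\tfrac{2\pi}{g})$, and the factorization $H'(s)=-2(g-1)\sin(\tfrac{gs}{2})\sin(\tfrac{(g-2)s}{2})$ handles every $g\geq 3$ at once, rather than case by case. For comparison, a case-by-case check would use $H(s)=2\sin s(\cos s-1)$ for $g=3$ and $H(s)=-4\sin^3 s$ for $g=4$.

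You also read the first sentence correctly. As printed, it lacks the qualifier ``for $g=2$'' and would otherwise contradict the second sentence.
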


\subsubsection{The symmetric boundary value problem}
This subsection addresses the so-called symmetric $(g,m,\ell_1,\ell_2)$-boundary value problem.

\smallskip

The following lemma asserts that every solution of the $(g,m)$-differential equation with $r(0)=y_j:=(jg\!+\!1)\tfrac{\pi}{2g}$, $j\in\Z$, exhibits point symmetry with respect to the point $P_j=(0,y_j)$. The proof is straightforward and is thus omitted.

\begin{Lem}
\label{sym}
\label{reflect}
 Let $r$ be a solution of the $(g,m)$-differential equation.
\begin{enumerate}
\item The maps $x\mapsto 2y_j-r(-x)$, $j\in\Z$ solve the $(g,m)$-differential equation.
\item If $r(0)=y_j$, $j\in\Z$, then $r$ is point symmetric to $P_j$.
\end{enumerate}
\end{Lem}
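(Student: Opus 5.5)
The plan is a direct verification for part (1), followed by an ODE uniqueness argument for part (2). Throughout I work with the $(g,m)$-differential equation in the variable $x$, i.e. (\ref{bvp-x}):
\begin{align*}
r''(x)=(m-1)\tanh x\,r'(x)+\tfrac{m}{2g}\left[(g-1)\sin\big(2r(x)-2a(x)\big)+\sin\big(2r(x)+2(g-1)a(x)\big)\right].
\end{align*}
The whole argument rests on two symmetry facts about the coefficients. First, $\tanh$ is odd. Second,
\begin{align*}
a(-x)=\tfrac{2}{g}\arctan e^{-x}=\tfrac{2}{g}\big(\tfrac{\pi}{2}-\arctan e^{x}\big)=\tfrac{\pi}{g}-a(x),
\end{align*}
which uses $\arctan u+\arctan u^{-1}=\tfrac{\pi}{2}$ for $u>0$.

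For part (1), fix $j\in\Z$ and set $s(x):=2y_j-r(-x)$. Then $s'(x)=r'(-x)$ and $s''(x)=-r''(-x)$. I will show that the right-hand side of the ODE evaluated at $(x,s)$ equals minus the right-hand side evaluated at $(-x,r)$. The ODE for $r$ at the point $-x$ then gives exactly $s''(x)$, which is what we need.

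The friction term is handled by oddness of $\tanh$: $(m-1)\tanh x\,s'(x)=-(m-1)\tanh(-x)\,r'(-x)$.

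For the first sine term, use $4y_j=(2jg+2)\tfrac{\pi}{g}$ and $2a(x)=\tfrac{2\pi}{g}-2a(-x)$. This gives
\begin{align*}
2s(x)-2a(x)=2j\pi-\big(2r(-x)-2a(-x)\big),
\end{align*}
so $\sin(2s-2a)(x)=-\sin\big(2r(-x)-2a(-x)\big)$.

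For the second sine term, the same substitutions give
\begin{align*}
2s(x)+2(g-1)a(x)=(2j+2)\pi-\big(2r(-x)+2(g-1)a(-x)\big),
\end{align*}
so this sine also changes sign.

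Adding the three contributions shows the right-hand side at $(x,s)$ is minus that at $(-x,r)$, which equals $-r''(-x)=s''(x)$. This proves (1).

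For part (2), assume $r(0)=y_j$. The function $s$ from part (1) solves the same equation and satisfies $s(0)=2y_j-y_j=y_j=r(0)$ and $s'(0)=r'(0)$. The equation is regular at $x=0$, since its right-hand side is smooth in $x$ and globally Lipschitz in $(r,r')$ on every compact $x$-interval. By the Picard--Lindel\"of uniqueness theorem, $s=r$ on their common interval of existence.

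Global existence is not in question. On bounded $x$-intervals the right-hand side is bounded by $(m-1)\abs{r'}+C$, so by Gronwall's inequality solutions cannot blow up and extend to all of $\R$.

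Therefore $r(x)=2y_j-r(-x)$ for all $x$, i.e. $r$ is point symmetric with respect to $P_j=(0,y_j)$. The only step needing care is the phase bookkeeping in the two sine arguments; the rest is routine.
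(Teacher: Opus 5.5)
Your proposal is correct. The identities $a(-x)=\tfrac{\pi}{g}-a(x)$ and $4y_j=2j\pi+\tfrac{2\pi}{g}$, together with the oddness of $\tanh$, make the right-hand side change sign under $x\mapsto -x$, $r\mapsto 2y_j-r$. Uniqueness for the regular initial value problem at $x=0$ then yields the point symmetry. The paper omits this proof as straightforward, and your direct verification followed by the Picard--Lindel\"of argument is exactly the routine argument it has in mind.
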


Thus, every solution $r$ of the $(g,m)$-differential equation that meets the conditions $r(0)=y_j$ for some $j\in\Z$ and $\lim_{x\rightarrow\infty}r(x)=2y_k$ for some $k\in\Z$ must be point symmetric to $P_j$.
This boundary value problem will be referred to henceforth as the \textit{symmetric $(g,m,jg+1,kg+1)$-boundary value problem}.

\smallskip

Every symmetric $(g,m,j^{'},j^{''})$-boundary value problem solution produces a solution for a $(g,m,k)$-boundary value problem.
In fact, if $r$ solves the symmetric $(g,m,j^{'},j^{''})$-boundary value problem, then $r+(j^{''}-j^{'})\pi$ is a solution of the $(g,m,(2j^{''}-j^{'})g+1)$-boundary value problem.
The converse, however, does not have to hold true: not every solution of a \((g,m,k)\)-boundary value problem produces a solution of a symmetric \((g,m,j^{'},j^{''})\)-boundary value problem.
The reason for this is that there may be solutions to the $(g,m,k)$-boundary value problem that are not symmetric with respect to points like $P_i$. 
Our numerical studies show that this is the case, for instance, when $(g,m)=(3,2)$.

\begin{Bem}
   \begin{enumerate}
       \item Note that it suffices to examine $j\in\Z_2$, as there is clearly a one-to-one correspondence between the solutions of the $(g,m,j,k)$ boundary value problem and those of the $(g,m,j+2,k+2)$ boundary value problem. 
\item The linear solutions \( r(x)=a(x) \) and \( r(x)=-(g-1)a(x)+\pi \) of the boundary value problems \((g,m,1)\) and \((g,m,-(g-1))\), respectively, solve the symmetric boundary value problems \((g,m,0,0)\) and \((g,m,-1,-1)\).  
\item It can be easily demonstrated that the symmetric $(g,m,j,k)$-boundary value problems possess a linear solution if and only if $k=j$ and $j\in\{0,1\}$. 
   \end{enumerate} 
\end{Bem}

\subsection{Numerical investigations for the cases with $m=1$}
In this subsection, we present the outcomes of numerical investigations.

\smallskip

To generate solutions of the $(g,1,k)$-boundary value problem with $g\in\left\{2,3,4,6\right\}$, we utilize a shooting method at $t=0$.
The results of our numerical studies suggest that there is precisely one solution to the $(g,1,k)$-boundary value problem for each $k\in\{1-2g,1-g,1,1+g\}$.
For $k=1$ and $k=1-g$, these correspond to the known linear solutions $r(x)=a(x)$ and $r(x)=-(g-1)a(x)+\pi$, respectively.
For $k=1-2g$ and $k=1+g$, the numerically found solutions exhibit point symmetry with respect to $P_{-2}$ and $P_1$, respectively.
Moreover, for other selections of $k\in\Z$, it appears that no solutions to the $(g,1,k)$-boundary value problem exist.

\smallskip

The following diagram visualizes the results mentioned in the preceding paragraph for the case of $g=4$ and $m=1$. The graphs in pink and green represent $k=1$ and $k=1-g$, respectively.
The graphs in yellow and blue correspond to $k=1+g$ and $k=1-2g$, respectively.

\begin{center}
\includegraphics[width=8cm]{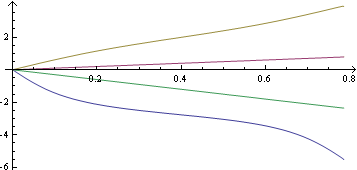}
\end{center}

\smallskip

In light of the numerical investigations mentioned above, we will restrict ourselves to the symmetric $(g,1,k^{'},k^{''})$-boundary value problems.
To put it differently, we look at solutions $r$ of the $(g,1)$-differential equation with initial conditions $r(0)=\tfrac{(jg\!+\!1)\pi}{2g}$ and $r'(0)=v$, and we seek initial velocities that make $r$ a solution of a symmetric $(g,1,jg+1,k)$-boundary value problem.
When the dependence of $W$ on $v$ is essential for our considerations, we will refer to $W$ as $W_v$.

\smallskip

Numerical investigations suggest that the sign of $W_v(-\infty)$ gives some insight into the behavior of $r$: 
\begin{itemize} \item if $W_{v}(-\infty)=0$, then $r$ is a solution to a symmetric $(g,1,jg+1,k)$-boundary value problem; \item if $W_{v}(-\infty)>0$, then $r$ is either strictly increasing or strictly decreasing and satisfies $\lim_{x\rightarrow\infty}r(x)=\pm\infty$; \item if $W_{v}(-\infty)<0$ and $r(0)=(g+1)\tfrac{\pi}{2g}$, then for all $x\in\R$, it holds that $0\leq r(x)\leq (g+1)\tfrac{\pi}{g}$; \item if $W_{v}(-\infty)<0$ and $r(0)=(1-2g)\tfrac{\pi}{2g}$, then for all $x\in\R$, the inequality $(1-2g)\tfrac{\pi}{g}\leq r(x)\leq 0$ holds, and in both previous cases, $r'$ changes sign infinitely often. 
\end{itemize}

For each of the choices $m=1$, $g\in\left\{2,3,4,6\right\}$, the numerical results indicate that there exist 
$l_{j},u_{j}\in\R$ such that $W_{v}(-\infty)<0$ if $v\in\left(l_j,u_j\right)$, $W_{v}(-\infty)=0$ if $v\in\left\{l_j,u_j\right\}$
and $W_{v}(-\infty)>0$ for the remaining cases. The numerical investigations yield 
\begin{center}
\begin{tabular}{c||c|c|c|c}
\hline
$g$&2&3&4&6\\ \hline
$u_1$&0.881&0.954&0.975&0.989\\ \hline
$l_{0}$&-0.881&-0.750&-0.648&-0.507\\ \hline
\end{tabular}
\end{center}
Furthermore, they indicate $l_1=-\frac{g-1}{g}$ and $u_{0}=\frac{1}{g}$, respectively, which are the initial velocities of the linear solutions.

\subsection{Proof of existence}
For each $g\in\left\{2,3,4,6\right\}$ we prove
the existence of two solutions of the $(g,1)$-BVP which do not coincide with linear solutions.
\begin{Lem}
\label{gw}
If $\lim_{x\rightarrow\infty}r(x)$ exists and is finite, then $\lim_{x\rightarrow\infty}r(x)=k\frac{\pi}{2}+\frac{\pi}{g}$, $k\in\Z$. 
If $\lim_{x\rightarrow -\infty}r(x)$ exists and is finite, then $\lim_{x\rightarrow -\infty}r(x)=k\tfrac{\pi}{2}$, $k\in\Z$.
\end{Lem}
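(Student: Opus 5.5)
The plan is to compute the limit of the nonlinear term of the $(g,m)$-ODE (\ref{bvp-x}) at $\pm\infty$. I then show that a finite limit of $r$ forces this limit to vanish, since otherwise $r'$ would grow without bound. I write the ODE as
$$r''(x)=(m-1)\tanh x\,r'(x)+F(x),\qquad F(x):=\tfrac{m}{2g}\big[\sin(2r(x))h_1(x)+\cos(2r(x))h_2(x)\big].$$

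\textbf{Step 1 (limits of $h_1,h_2$).} Since $a(x)\to\pi/g$ as $x\to\infty$, and using $\cos(2(g-1)\pi/g)=\cos(2\pi/g)$ and $\sin(2(g-1)\pi/g)=-\sin(2\pi/g)$, we get
$$h_1(x)\to g\cos\tfrac{2\pi}{g},\qquad h_2(x)\to -g\sin\tfrac{2\pi}{g}\quad(x\to\infty).$$
Hence, if $r(x)\to L$ as $x\to\infty$, then $F(x)\to c:=\tfrac{m}{2}\sin\big(2L-\tfrac{2\pi}{g}\big)$. Similarly, $a(x)\to 0$ as $x\to-\infty$, so $h_1\to g$ and $h_2\to 0$. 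If $r(x)\to L$ as $x\to-\infty$, then $F(x)\to c:=\tfrac{m}{2}\sin 2L$. The zeros of these two expressions are exactly $L=k\tfrac{\pi}{2}+\tfrac{\pi}{g}$ and $L=k\tfrac{\pi}{2}$, respectively. So it suffices to show $c=0$.

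\textbf{Step 2 (unit-interval increments of $r'$).} Integrate the ODE over $[y,y+1]$. For the damping term, integrate by parts:
$$\int_y^{y+1}\tanh s\,r'(s)\,ds=\tanh(y+1)r(y+1)-\tanh(y)r(y)-\int_y^{y+1}\frac{r(s)}{\cosh^2 s}\,ds.$$
This tends to $L-L-0=0$ as $y\to\infty$, since $\tanh\to1$ and $r\to L$; as $y\to-\infty$ it tends to $-L+L-0=0$. Hence
$$r'(y+1)-r'(y)\longrightarrow c\qquad(y\to\pm\infty).$$

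\textbf{Step 3 (contradiction if $c\neq0$).} This is the only real point of the proof. Suppose $c>0$ at $+\infty$; the case $c<0$ is symmetric. Choose $X$ such that $r'(y+1)-r'(y)\ge c/2$ for all $y\ge X$. Since $r'$ is continuous, it is bounded below on $[X,X+1]$. Iterating gives $r'(y+n)\ge r'(y)+nc/2$ for all $y\in[X,X+1]$, so $r'\ge 1$ on $[X+N,\infty)$ for some $N$. Then $r(x)\to\infty$, contradicting the finiteness of $L$. At $-\infty$ the same argument runs with $y+1$ replaced by $y-1$, that is, iterating towards $-\infty$. Therefore $c=0$, which yields the two stated forms of the limit.

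The main subtlety is the damping term $(m-1)\tanh x\,r'$. The integration by parts in Step 2 is what removes it without any a priori control of $r'$. For $m=1$ the term is absent and Step 2 is immediate.
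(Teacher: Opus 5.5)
Your proof is correct, and its core is the paper's: compute the limit of the nonlinear term at $\pm\infty$, then show that a nonzero limit is incompatible with $r$ converging. Your limit values $\tfrac{m}{2}\sin\bigl(2L-\tfrac{2\pi}{g}\bigr)$ and $\tfrac{m}{2}\sin 2L$ agree with the paper's expression when $m=1$.

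The difference is in scope and rigor. The paper works only with the $(g,1)$-equation, which has no damping term. There $r''$ itself tends to the constant, so ``else the limit would not exist'' is just the observation that $r''\ge c/2>0$ eventually forces $r\to\infty$. Your integration by parts over unit intervals removes $(m-1)\tanh x\,r'$ without any a priori control on $r'$. As a result, your argument proves the lemma for the full $(g,m)$-ODE, at the cost of the slightly longer increment argument in Step 3.
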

\begin{proof}
Assume that $\lim_{x\rightarrow\infty}r(x)=c$ for a $c\in\R$.
When $x$ goes to infinity, the right hand side of the $(g,1)$-differential equation converges to
\begin{align*}
\tfrac{1}{2g}\left[(g\!-\!1)\sin\big(2c\!-\!2\tfrac{\pi}{g}\big)+\sin\big(2c\!+\!2(g\!-\!1)\tfrac{\pi}{g}\big)\right].
\end{align*}
This term must disappear, or else $\lim_{x\rightarrow\infty}r(x)$ would be nonexistent.
This leads to the conclusion. The second case is handled similarly.
\end{proof}
 
\begin{Lem}
\label{wincrease}
If $m=1$ and
\begin{enumerate}
	\item $g\in\left\{2,3,4\right\}$, then the function $W$ increases strictly.
	\item and $g=6$, then the function $W$ increases strictly on the negative $x$-axis. 
\end{enumerate}
\end{Lem}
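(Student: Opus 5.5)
With $m=1$ the first term $(m-1)\tanh(x)\,r'(x)^2$ in the formula for $W'$ drops out, and what remains is
\begin{align*}
W'(x)=\tfrac{g-1}{g^2\cosh x}\Big[\big(\sin2a(x)+\sin2(g\!-\!1)a(x)\big)\sin^2r(x)+\big(\cos2a(x)-\cos2(g\!-\!1)a(x)\big)\sin^2\big(r(x)\!-\!\tfrac{3\pi}{4}\big)\Big].
\end{align*}
The factor $\tfrac{g-1}{g^2\cosh x}$ is positive for $g\ge 2$. So I only need the sign of the bracket, which is a nonnegative combination of $\sin^2 r$ and $\sin^2(r-\tfrac{3\pi}{4})$. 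It therefore suffices to show that both coefficients
\begin{align*}
A(x):=\sin2a(x)+\sin2(g\!-\!1)a(x),\qquad B(x):=\cos2a(x)-\cos2(g\!-\!1)a(x)
\end{align*}
are nonnegative in the relevant range, and that they never vanish simultaneously with the corresponding $\sin^2$ factors on an interval. Since $\sin^2 r$ and $\sin^2(r-\tfrac{3\pi}{4})$ never vanish together, the bracket is positive wherever $A>0$ and $B>0$. Strict increase then holds even if the bracket vanishes at isolated points.

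To handle $A$ and $B$, I would write $s:=a(x)=\tfrac{2}{g}\arctan e^x$, which ranges over $(0,\tfrac{\pi}{g})$ as $x$ ranges over $\R$, and $s<\tfrac{\pi}{2g}$ exactly when $x<0$. Sum-to-product gives
\begin{align*}
A=2\sin(gs)\cos\big((g\!-\!2)s\big),\qquad B=2\sin\big((g\!-\!1)s+s\big)\sin\big((g\!-\!1)s-s\big)=2\sin(gs)\sin\big((g\!-\!2)s\big).
\end{align*}
On $(0,\tfrac{\pi}{g})$ we have $\sin(gs)>0$. It then remains to check the signs of $\cos((g-2)s)$ and $\sin((g-2)s)$ for $s\in(0,\tfrac{\pi}{g})$, i.e.\ for $(g-2)s\in(0,\tfrac{(g-2)\pi}{g})$. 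For $g=2$, $B\equiv0$ and $A=2\sin 2s>0$. Hence $W'$ is a positive multiple of $\sin^2 r$, which vanishes only at isolated points unless $r\equiv k\pi$. That case is excluded, since constants $k\pi$ do not solve the ODE because $\sin(2k\pi-2a)\neq0$; and in any case isolated zeros still give strict monotonicity. For $g=3$, $(g-2)s=s\in(0,\tfrac{\pi}{3})$, so both $\cos$ and $\sin$ are positive. For $g=4$, $2s\in(0,\tfrac{\pi}{2})$, so again both are positive. This proves part (1). For $g=6$, $4s\in(0,\tfrac{2\pi}{3})$, so $\sin(4s)>0$ throughout, while $\cos(4s)>0$ only for $4s<\tfrac{\pi}{2}$, i.e.\ $s<\tfrac{\pi}{8}$. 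Since the negative $x$-axis corresponds to $s<\tfrac{\pi}{12}<\tfrac{\pi}{8}$, both coefficients are positive there, which proves part (2).

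The main delicate point is the passage from $W'\ge0$ to strict increase. I would argue that on any interval where $A,B>0$, the bracket is bounded below by $\min(A,B)\cdot\min_y\big(\sin^2y+\sin^2(y-\tfrac{3\pi}{4})\big)$, and this minimum is positive, so $W'>0$ pointwise there. For $g=2$, $W'\ge0$ with zeros exactly where $\sin r=0$. If $r(x_0)=k\pi$ and $r'(x_0)=0$, the ODE shows that $r$ is not locally constant, so the zeros of $\sin r$ are isolated and $W$ is still strictly increasing. I would also double-check the formula for $W'$ above, in particular its sign conventions, against the derivatives of $h_1$ and $h_2$, since everything hinges on the sign of that bracket.
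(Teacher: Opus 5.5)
Your route is the paper's own. Its proof simply invokes the monotonicity of $h_1,h_2$ from Lemmas~\ref{h1} and~\ref{h2}. Since $a'(x)=\frac{1}{g\cosh x}$, one has $h_1'=-\frac{2(g-1)}{g\cosh x}A$ and $h_2'=-\frac{2(g-1)}{g\cosh x}B$. So your sum-to-product sign analysis of $A$ and $B$ is a self-contained proof of exactly those monotonicity facts, and your formula for $W'$ is correct. For $g\in\{3,4\}$, and for $g=6$ on $x<0$, your argument is complete and more careful than the paper's one-line proof. There $A,B>0$, and $\sin r$ and $\sin(r-\frac{3\pi}{4})$ never vanish simultaneously, so $W'>0$ pointwise.

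The step that fails is the strictness argument for $g=2$. There $h_2\equiv 0$, so at $r=k\pi$ the right-hand side of the ODE is $\frac14\big(\sin(2k\pi-2a)+\sin(2k\pi+2a)\big)=0$. You checked only the first summand, and the second cancels it. Hence $r\equiv k\pi$ \emph{is} a solution of the $(2,1)$-ODE. For it $W'\equiv 0$ (indeed $W\equiv 0$), so $W$ is constant, not strictly increasing.

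Likewise, your claim that $r(x_0)=k\pi$, $r'(x_0)=0$ prevents $r$ from being locally constant is backwards: by uniqueness it forces $r\equiv k\pi$. So for $g=2$ the statement is only true for solutions other than the constants $k\pi$. The paper's statement and proof overlook this as well; it is harmless for the later application, where $r(0)=(2j+1)\frac{\pi}{4}$.

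The correct argument is this. If $r\not\equiv k\pi$, then at every zero $x_0$ of $\sin r$ uniqueness gives $r'(x_0)\neq 0$. So these zeros are isolated, and $W'>0$ off them yields strict increase.
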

\begin{proof}
Use Lemma\,\ref{h1} and Lemma\,\ref{h2}.
\end{proof}

\begin{Lem}
\label{wnegativ}
For $m=1$ we have $\lim_{x\rightarrow\pm\infty}W'(x)=0$. 
Furthermore, the limits $W(\pm\infty):=\lim_{x\rightarrow \pm\infty}W(x)$ exist and are finite.
\end{Lem}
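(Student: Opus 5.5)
Since $m=1$, the term $(m-1)\tanh(x)\,r'(x)^2$ in the formula for $W'$ drops out, and everything else is explicitly bounded. So the plan is to bound $\lvert W'\rvert$ by an integrable function of $x$ alone and then integrate. Concretely, for $m=1$ the formula for $W'$ from Subsection\,\ref{wr} reads
\begin{align*}
W'(x)=\tfrac{g-1}{g^2\cosh x}\Big[\big(\sin2a(x)+\sin2(g\!-\!1)a(x)\big)\sin^2r(x)-\big(-\cos2a(x)+\cos2(g\!-\!1)a(x)\big)\sin^2\big(r(x)\!-\!\tfrac{3\pi}{4}\big)\Big].
\end{align*}

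First I will make sure $W$ is defined on all of $\R$, i.e.\ that the solution $r$ exists globally. For $m=1$ the $(g,1)$-differential equation is
\begin{align*}
r''(x)=\tfrac{1}{2g}\big[\sin(2r(x))h_1(x)+\cos(2r(x))h_2(x)\big].
\end{align*}
Its right-hand side is smooth, globally Lipschitz in $r$, and bounded uniformly in $(x,r)$, since $h_1,h_2$ are bounded by $g$. Hence $r'$ grows at most linearly and $r$ at most quadratically, so no blow-up can occur and the solution exists for all $x\in\R$. In particular $W$ and $W'$ are defined on $\R$, and the formula above for $W'$ holds there, because it was obtained by plugging in the ODE.

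Second, the bracket in $W'$ is a combination of products of sines and cosines, so it is bounded in absolute value by $4$, independently of $r$. This gives
\begin{align*}
\lvert W'(x)\rvert\le \tfrac{4(g-1)}{g^2}\cdot\tfrac{1}{\cosh x}.
\end{align*}
Since $1/\cosh x\to0$ as $x\to\pm\infty$, this immediately yields $\lim_{x\to\pm\infty}W'(x)=0$.

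Third, $1/\cosh x$ is integrable on $\R$, with $\int_{\R}\frac{dx}{\cosh x}=\pi$. Writing $W(x)=W(0)+\int_0^xW'(s)\,ds$, the integral converges absolutely as $x\to\pm\infty$. Therefore the limits $W(\pm\infty)$ exist and satisfy
\begin{align*}
\lvert W(\pm\infty)-W(0)\rvert\le \tfrac{4(g-1)\pi}{g^2},
\end{align*}
so in particular they are finite.

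The only step needing any care is the global existence of $r$, which is not stated explicitly earlier. It follows from the boundedness of the right-hand side as indicated, since that excludes finite-time blow-up of $(r,r')$. Everything else is a direct estimate.
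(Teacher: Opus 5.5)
Your proof is correct, but it takes a different route from the paper.

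\textbf{The paper's route.} The paper proves existence of the limits by monotonicity plus boundedness. For $g\in\{2,3,4\}$ it uses the sign information from Lemmas \ref{h1} and \ref{h2} ($h_1'\le 0$, $h_2'\le 0$). This shows that $W'=-\tfrac{1}{2g}\big(h_1'\sin^2 r+h_2'\sin^2(r-\tfrac{3\pi}{4})\big)\ge 0$, so $W$ is increasing. It then bounds $W(x)-W(a)$ from above by $\tfrac{1}{2g}\big(h_1(a)+h_2(a)-h_1(x)-h_2(x)\big)$. For $g=6$, $h_1'$ changes sign at $x_0$, so the paper has to split $W-W(a)$ into the two integrals $I_1,I_2$ and show that each is eventually monotone and bounded.

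\textbf{Your route.} You use the uniform estimate $\lvert W'(x)\rvert\le \tfrac{4(g-1)}{g^2\cosh x}$, i.e.\ $W'\in L^1(\R)$, and conclude by absolute convergence. This needs no sign information on $h_1'$ or $h_2'$ and no case distinction. It works for every $g$, not just $g\in\{2,3,4,6\}$. It also gives a quantitative bound on $W(\pm\infty)-W(0)$.

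The paper in fact uses this very bound a little later, in the proof of Lemma \ref{wneg1}. There it takes the half-line constant $\int_0^\infty \tfrac{dx}{\cosh x}=\tfrac{\pi}{2}$, giving $2\pi\tfrac{g-1}{g^2}$ rather than your $4\pi\tfrac{g-1}{g^2}$; yours is weaker but still valid.

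\textbf{What each buys.} The paper's approach additionally yields monotonicity of $W$. However, that is stated separately as Lemma \ref{wincrease} and is not needed for this lemma. Your explicit argument for global existence of $r$, from the bounded and globally Lipschitz right-hand side when $m=1$, supplies a point the paper leaves implicit.
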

\begin{proof}
The first statement follows immediately from Subsection\,\ref{wr}.

\smallskip

Since $m=1$, we get $W'(x)=-\tfrac{1}{2g}\big(h_1'(x)\sin^2r(x)+h_2'(x)\sin^2(r(x)-\tfrac{3\pi}{4})\big)$,
with $h'_i(x):=\tfrac{d}{dx}h_i(x)$, $i\in\left\{1,2\right\}$.
Let $g\in\left\{2,3,4\right\}$ and $a\in\R$ be given.
Then the above identity implies 
\begin{align*}
W(x)=W(a)+\int_{a}^x-\tfrac{1}{2g}\big(h_1'(\xi)\sin^2r(\xi)+h_2'(\xi)\sin^2(r(\xi)-\tfrac{3\pi}{4})\big)d\xi.
\end{align*}
Lemma\,\ref{h1} and Lemma\,\ref{h2} imply that the integrand is positive.
Hence, the function defined by $x\mapsto W_v(x)$
is increasing. Furthermore, it is bounded from above, since
\begin{align*}
W(x)-W(a)=&\int_{a}^x-\tfrac{1}{2g}\big(h_1'(\xi)\sin^2r(\xi)+h_2'(\xi)\sin^2(r(\xi)-\tfrac{3\pi}{4})\big)d\xi\\
\leq &\int_{a}^x-\tfrac{1}{2g}\big(h_1'(\xi)+h_2'(\xi)\big)d\xi\\
=&-\tfrac{1}{2g}\big(h_1(x)+h_2(x)-h_1(a)-h_2(a)\big).
\end{align*}
Since the right hand side of this inequality is bounded by a constant, the limit $\lim_{x\rightarrow\infty}W(x)$ exists and is finite. 
Similarly, we prove that $\lim_{x\rightarrow -\infty}W(x)$ exists and is finite.

\smallskip

Below let $g=6$ and $a\in\R$ be given. Introduce $I_j:\R\rightarrow\R$, $j\in\{1,2\}$, by
\begin{align*}
&I_1:\R\rightarrow\R,\, &&x\mapsto \int_a^x-\tfrac{1}{12}h_2'(\xi)\sin^2(r(\xi)-\tfrac{3\pi}{4})d\xi,\\
&I_2:\R\rightarrow\R,\, &&x\mapsto \int_a^x-\tfrac{1}{12}h_1'(\xi)\sin^2r(\xi)d\xi.
\end{align*}
Clearly, $W(x)=W(a)+I_1(x)+I_2(x)$. As above we prove that  $I_1$ is increasing and bounded from above. 
Therefore the limit $\lim_{x\rightarrow\infty}I_1(x)$ exists and is finite.\\
By Lemma\,\ref{h1} there exists a unique zero $x_0>0$ of the function $h_1'(x)$ such that $h_1'(x)$ is negative for $x<x_0$ and positive for $x>x_0$.
Let $a>x_0$ be given. The function $I_2$
 is decreasing on $\left[a,\infty\right)$.
Since it is also bounded from below, the limit $\lim_{x\rightarrow\infty}I_2(x)$ exists and is finite.
Consequently, $\lim_{x\rightarrow\infty}W(x)=\lim_{x\rightarrow\infty}(I_1(x)+I_2(x))$ exists and is finite. Similarly, we prove that
$\lim_{x\rightarrow -\infty}W(x)$ exists and is finite.
\end{proof}

Motivated by the previous lemma, we define the function $$L:\R\rightarrow\R,\, v\mapsto W_v(-\infty).$$

\begin{Lem}
\label{wneg}
Let $r_v$ be a solution of the symmetric $(g,1,j)$-differential equation.
If $L(v)<0$ then $\lim_{x\rightarrow -\infty}r_v(x)=k\pi$ for a $k\in\Z$ is not possible. Furthermore, the function
$r_v$ is bounded.
\end{Lem}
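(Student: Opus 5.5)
We prove both assertions using the monotonicity of $W$ from Lemma\,\ref{wincrease} and the asymptotics of $h_1,h_2$ as $x\to-\infty$. Since $a(x)\to 0$ as $x\to-\infty$, we have $h_1(x)\to g$ and $h_2(x)\to 0$ (the latter since $\sin 0=0$; recall Lemma\,\ref{h2} gives $h_2\equiv 0$ for $g=2$). Hence along any solution
\begin{align*}
W(x)=\tfrac12 r'(x)^2-\tfrac{1}{2g}h_1(x)\sin^2 r(x)-\tfrac{1}{2g}h_2(x)\sin^2\big(r(x)-\tfrac{3\pi}{4}\big),
\end{align*}
and the last term tends to $0$ as $x\to-\infty$.

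\textbf{Step 1: excluding the limit $k\pi$.} Suppose $\lim_{x\to-\infty}r_v(x)=k\pi$. Then $\sin^2 r_v(x)\to 0$, so the second term of $W$ tends to $0$. Since $L(v)=\lim_{x\to-\infty}W_v(x)$ exists by Lemma\,\ref{wnegativ}, $\tfrac12 r_v'(x)^2$ has a limit, namely $L(v)$. This limit cannot be negative, since $\tfrac12 r_v'^2\ge 0$. Hence $L(v)\ge0$, contradicting $L(v)<0$. (One could also note that the limit must be $0$, since otherwise $r_v$ would be unbounded near $-\infty$. But nonnegativity already suffices.) So a finite limit $k\pi$ at $-\infty$ is impossible.

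\textbf{Step 2: boundedness on the negative half-line.} We use that $W$ increases on $(-\infty,0]$ (Lemma\,\ref{wincrease}, valid for all $g\in\{2,3,4,6\}$). Then for $x\le 0$ we have $W(x)\le W(0)$, which does not give a sign directly. Instead we use the limit: for $x$ near $-\infty$, $W(x)$ is close to $L(v)<0$, and by monotonicity $W(x)\le L(v)+\epsilon$ is not automatic. The right inequality is $W(x)\ge L(v)$ for all $x$, which holds because $W$ increases and $L(v)$ is the value at $-\infty$.

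The useful direction comes from a different route. Since $W$ increases, $W(x)\le L(v)<0$ fails in general, so we instead argue through sublevel confinement near $-\infty$. Choose $x_1\ll0$ with $W(x)<L(v)/2<0$ for all $x\le x_1$ (possible since $W(x)\to L(v)$). For $x\le x_1$ this gives $\tfrac{1}{2g}h_1\sin^2 r_v+\tfrac{1}{2g}h_2\sin^2(\cdots)>\abs{L(v)}/2$. Since $h_2\to0$ and $h_1\to g$, this forces $\sin^2 r_v(x)\ge c>0$ on $(-\infty,x_2]$. Thus $r_v$ never crosses a multiple of $\pi$ there, so $r_v$ stays in a single interval $(k\pi,(k+1)\pi)$ and is bounded on $(-\infty,x_2]$. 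On the compact interval $[x_2,0]$ boundedness is automatic.

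\textbf{Step 3: the positive half-line.} Here we use the point symmetry of Lemma\,\ref{sym}. Since $r_v(0)=y_j$, $r_v$ is point symmetric to $P_j$, so $r_v(x)=2y_j-r_v(-x)$. Boundedness on $(-\infty,0]$ therefore transfers to $[0,\infty)$. This step is what avoids the lack of monotonicity of $W$ for $g=6$ on the positive axis.

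\textbf{Main obstacle.} The delicate point is Step 2: extracting the uniform lower bound on $\sin^2 r_v$ from $W<0$. This needs the quantitative limits $h_1\to g>0$ and $h_2\to0$, together with the observation that the $r'^2$ term only helps, since it is nonnegative and so $W<0$ bounds the potential part from below.
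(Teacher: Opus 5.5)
Your proof is correct and follows essentially the same route as the paper. You rule out the limit $k\pi$ because it would force $L(v)=\lim \tfrac12 r_v'^2\ge 0$, use $W_v\to L(v)<0$ to keep $r_v$ off $\pi\Z$ on a left tail (so it is trapped in one interval $(k\pi,(k+1)\pi)$), and transfer boundedness to $[0,\infty)$ via the point symmetry about $P_j$.

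The paper's Step 2 is a bit leaner. It notes that $W_v(x_0)=\tfrac12 r_v'(x_0)^2-\tfrac{1}{4g}h_2(x_0)\ge 0$ whenever $r_v(x_0)\in\pi\Z$, since $h_2\le 0$, so it needs neither a uniform lower bound on $\sin^2 r_v$ nor the asymptotics of $h_1,h_2$. Your exploratory remarks about the monotonicity of $W$ can be deleted, since only the existence of $L(v)$ is used.
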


\begin{proof}
If $\lim_{x\rightarrow -\infty}r_v(x)=k\pi$ we get $W_v(-\infty)\geq 0$,
which contradicts our assumption.
For any point $x_0\in\R$ with $r_v(x_0)=k\pi$, $k\in\Z$, we get $W_v(x_0)\geq 0$. Since $W_v(-\infty)<0$, there thus has to exist a point $x_1\in\R$, such that $r_v(x)\neq k\pi$ for all $x\leq x_1$ and all $k\in\Z$.
Hence, since $r_v$ is continuous and point symmetric with respect to $P_j$, implies that the function $r_v$ is bounded.
\end{proof}

\begin{Lem}
\label{wpositiv}
Let $r_v$ be a solution of the symmetric $(g,1,j)$-differential equation and suppose $L(v)>0$. Then there exist a point $x_0\in\R$ such that $r_v'(x)\neq 0$ for all $x\leq x_0$. Furthermore, $\lim_{x\rightarrow -\infty}r_v'(x)=0$ is not possible.
\end{Lem}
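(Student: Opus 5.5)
Throughout, write $r=r_v$ and $W=W_v$, and use the expression from Subsection\,\ref{wr},
\begin{align*}
W(x)=\tfrac{1}{2}r'(x)^2-\tfrac{1}{2g}h_1(x)\sin^2 r(x)-\tfrac{1}{2g}h_2(x)\sin^2\big(r(x)-\tfrac{3\pi}{4}\big).
\end{align*}
Both claims will follow from comparing this expression with the limit $L(v)=W(-\infty)>0$ supplied by Lemma\,\ref{wnegativ}. The key observation is the behaviour of the potential term as $x\to-\infty$. Since $a(x)\to 0$, we get $h_1(x)\to (g-1)+1=g$. The function $h_2$ vanishes identically for $g=2$, and for general $g$ we have $h_2(x)\to 0$ because $\sin 0=0$. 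Hence
\begin{align*}
V(x):=-\tfrac{1}{2g}h_1(x)\sin^2 r(x)-\tfrac{1}{2g}h_2(x)\sin^2\big(r(x)-\tfrac{3\pi}{4}\big)
\end{align*}
satisfies $V(x)\le \tfrac{1}{2g}|h_2(x)|+\tfrac{1}{2g}\max(0,-h_1(x))$. The right-hand side tends to $0$ as $x\to-\infty$, because $h_1$ is eventually positive there; this also follows from Lemma\,\ref{h1}. So $\limsup_{x\to-\infty}V(x)\le 0$.

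\textbf{First claim.} Fix $\epsilon=L(v)/2>0$. Choose $x_0$ so negative that for all $x\le x_0$ we have $W(x)>L(v)-\epsilon/2$ and $V(x)<\epsilon/2$. Then for $x\le x_0$,
\begin{align*}
\tfrac12 r'(x)^2=W(x)-V(x)>L(v)-\epsilon=\tfrac{L(v)}{2}>0,
\end{align*}
so $r'(x)\neq0$ on $(-\infty,x_0]$.

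\textbf{Second claim.} The same inequality gives $\liminf_{x\to-\infty}r'(x)^2\ge L(v)>0$, which excludes $\lim_{x\to-\infty}r'(x)=0$. One can also argue directly: if $r'\to0$, then $W(x)-V(x)\to 0$, so $\liminf_{x\to-\infty}W(x)\le\limsup_{x\to-\infty}V(x)\le 0<L(v)$, which is a contradiction.

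The only delicate step is justifying the limiting behaviour of $h_1$ and $h_2$ at $-\infty$, that is, $h_1\to g>0$ and $h_2\to0$. This is immediate from the explicit formulas together with $a(-\infty)=0$, so I do not expect a real obstacle. Note that the argument does not use the symmetry of $r_v$ or the case distinctions in Lemma\,\ref{wincrease}; it only needs the existence of $W(-\infty)$ from Lemma\,\ref{wnegativ}.
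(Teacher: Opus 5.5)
Your proof is correct and follows essentially the paper's argument: both split $W_v=\tfrac12 r_v'^2+V$ and use $h_1\to g>0$, $h_2\to 0$ as $x\to-\infty$ to push the potential part eventually below a fixed fraction of $L(v)$ while $W_v$ stays above another fraction, which forces $r_v'\neq 0$. The only difference is that you extract the uniform bound $r_v'(x)^2>L(v)$ for $x\le x_0$, which settles the second claim immediately, whereas the paper proves it by a separate contradiction argument using the threshold $\tfrac{3w}{4}$.
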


\begin{proof}
Introduce the short hand notation $w:=W_v(-\infty)$.
By assumption there exists a point $x_1\in\R$ such that $W_v(x)>\tfrac{w}{2}$ for all $x\leq x_1$.
Furthermore, since $\lim_{x\rightarrow -\infty}h_1(x)=g$
and $\lim_{x\rightarrow -\infty}h_2(x)=0$, there exists a $x_2\in\R$ such that $W_v(x)-\tfrac{1}{2}r_v'(x)^2<\tfrac{w}{2}$ for all $x\leq x_2$.
Define $x_0=\mbox{min}\left(x_1,x_2\right)$. 
Hence,
\begin{align*}
\tfrac{w}{2}-\tfrac{1}{2}r_v'(x)^2<W_v(x)-\tfrac{1}{2}r_v'(x)^2<\tfrac{w}{2}
\end{align*}
for all $x\leq x_0$ and thus $r_v'(x)\neq 0$ for all $x\leq x_0$.\\
If $\lim_{x\rightarrow -\infty}r'(x)=0$, there exists a point $x_3\in\R$ such that $\tfrac{1}{2}r'(x)^2<\tfrac{w}{4}$
for all $x\leq x_3$. Define $x_4=\mbox{min}\left(x_2,x_3\right)$. Then $W_v(x)<\tfrac{3w}{4}$ for all $x\leq x_4$.
This is a contradiction, since by assumption there has to exist a point $x_5\in\R$ such that $W_v(x)>\tfrac{3w}{4}$ for all $x\leq x_5$.
\end{proof}

\begin{Cor}
\label{cip}
Let $r_v$ be a solution of the symmetric $(g,1,j)$-differential equation and suppose $L(v)>0$.
Then we have $\lim_{x\rightarrow -\infty}r(x)=\pm\infty$ and $\lim_{x\rightarrow \infty}r(x)=\pm\infty$.
\end{Cor}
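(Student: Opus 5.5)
By Lemma\,\ref{wpositiv} there is a point $x_0$ such that $r_v'$ does not vanish on $(-\infty,x_0]$. Hence $r_v$ is strictly monotone there, and $\lim_{x\rightarrow-\infty}r_v(x)$ exists in $\R\cup\{\pm\infty\}$. The plan is to exclude a finite limit and then to carry the conclusion to $+\infty$ using the point symmetry of $r_v$ with respect to $P_j$ (Lemma\,\ref{sym}).

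\textbf{Step 1: no finite limit at $-\infty$.} Suppose $\lim_{x\rightarrow-\infty}r_v(x)=c\in\R$. By Lemma\,\ref{gw}, $c=k\tfrac{\pi}{2}$ for some $k\in\Z$. Since $m=1$, the term $\tanh x\,r'$ drops out, and the $(g,1)$-differential equation reads $r''=\tfrac{1}{2g}[\sin(2r)h_1+\cos(2r)h_2]$. Its right-hand side converges as $x\rightarrow-\infty$, because $h_1\rightarrow g$, $h_2\rightarrow0$ and $r_v\rightarrow c$. By Lemma\,\ref{gw} the limit is zero, so $r_v''\rightarrow 0$.

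Now use that $r_v'$ has constant sign near $-\infty$ and that $\int_{-\infty}^{x_0}r_v'\,d\xi=r_v(x_0)-c$ is finite. Combined with $r_v''\rightarrow0$, this forces $r_v'\rightarrow 0$. The argument is as follows. Since $W_v$ converges to $L(v)$ and the potential part of $W_v$ converges to $-\tfrac12\sin^2 c$, the quantity $\tfrac12 r_v'^2$ converges. Its limit must be $0$, since otherwise $r_v'$ would be bounded away from zero near $-\infty$ and $r_v$ would be unbounded. This contradicts the second statement of Lemma\,\ref{wpositiv}. Hence $\lim_{x\rightarrow-\infty}r_v(x)=\pm\infty$.

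\textbf{Step 2: transfer to $+\infty$.} Since $r_v$ solves the symmetric problem, $r_v(0)=y_j$, so by Lemma\,\ref{sym} we have $r_v(x)=2y_j-r_v(-x)$. Therefore $\lim_{x\rightarrow\infty}r_v(x)=2y_j-\lim_{x\rightarrow-\infty}r_v(x)=\mp\infty$, which gives the second claim.

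\textbf{Main obstacle.} The delicate point is the claim that $r_v'^2$ converges when $r_v$ has a finite limit. It rests on the existence of $W_v(-\infty)$ (Lemma\,\ref{wnegativ}) and on the continuity of the potential terms in $W_v$ as functions of $r$ and $x$. Once this is in place, the contradiction with Lemma\,\ref{wpositiv} is immediate.
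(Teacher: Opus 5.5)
Your proof is correct and follows the route the paper has in mind. The paper states the corollary without proof, as a consequence of Lemma \ref{wpositiv} (monotonicity near $-\infty$ and $r_v'\not\to 0$) together with the point symmetry of Lemma \ref{sym}, and you correctly supply the step it leaves implicit: a finite limit $c$ would force $\tfrac12 r_v'^2\to L(v)+\tfrac12\sin^2 c\geq L(v)>0$, which already contradicts convergence of $r_v$, so the detour through Lemma \ref{gw} and $r_v''\to 0$ is unnecessary.
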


\begin{Lem}
\label{wneg1}
Let $g\in\left\{2,3,4,6\right\}$.
Suppose that $r$ is point symmetric with respect to the point $P_1^x=(0,(g\!+\!1)\tfrac{\pi}{2g})$. Then
there exists $v_{+}>0$ with $r'(0)=v_{+}$ such that $L(v_+)=0$.
Furthermore, there exists $v_{-}<0$ with $r'(0)=v_{-}$ such that $L(v_-)=0$.
\end{Lem}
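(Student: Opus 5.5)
We argue by continuity and an intermediate value argument on the function $L(v)=W_v(-\infty)$, restricted to solutions $r_v$ of the $(g,1)$-differential equation with $r_v(0)=(g+1)\tfrac{\pi}{2g}$ and $r_v'(0)=v$. By Lemma\,\ref{sym} each such $r_v$ is point symmetric with respect to $P_1^x$. The plan is to find one velocity $v$ with $L(v)<0$ and, on each side of it, velocities with $L>0$. Then $v_+$ and $v_-$ are obtained as boundary points of the set $\{L<0\}$, provided we know that $\{L<0\}$ is open and that $L\le 0$ on its boundary. The resulting zeros must be shown to satisfy $v_+>0$ and $v_-<0$.

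\textbf{Step 1: a velocity with $L<0$.} We take $v=0$. Since $r_0$ is point symmetric about $P_1^x$ and $r_0'(0)=0$, we have $W_0(0)=-\tfrac{1}{2g}\big(h_1(0)\sin^2 y_1+h_2(0)\sin^2(y_1-\tfrac{3\pi}{4})\big)$. By Lemma\,\ref{wincrease} $W_0$ is increasing on the negative axis, so $L(0)\le W_0(0)$. It therefore suffices to check that this explicit number is negative for each $g\in\{2,3,4,6\}$. This is a finite computation using $a(0)=\tfrac{\pi}{2g}$ and Lemmas\,\ref{h1}, \ref{h2}. If it fails for some $g$, we would instead use a small $v$, or the velocity of a nearby symmetric configuration.

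\textbf{Step 2: large $\abs{v}$ gives $L>0$.} For $\abs{v}$ large, $W_v(0)=\tfrac12 v^2+O(1)$. On $(-\infty,0]$ the decrease of $W_v$ backwards is bounded by the integral estimate in the proof of Lemma\,\ref{wnegativ}, namely by $\tfrac{1}{2g}\sup\abs{h_1+h_2}$-type constants. Hence $L(v)\ge \tfrac12 v^2-C>0$ for $\abs{v}$ large. For $g=6$ we only need monotonicity on the negative axis, which Lemma\,\ref{wincrease}(2) provides.

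\textbf{Step 3: continuity and the boundary, which is the main obstacle.} Solutions depend continuously on $v$ on compact $x$-intervals, but $L$ is a limit as $x\to-\infty$, so continuity of $L$ is not automatic. We prove upper semicontinuity: because $W_v$ is increasing on the negative axis, $L(v)=\inf_{x\le 0}W_v(x)$ is an infimum of continuous functions. Hence $\{L<0\}$ is open, which is the direction we need. We then define $v_+=\sup\{v>0: L<0 \text{ on } [0,v)\}$, finite by Step 2, and show $L(v_+)=0$. Openness gives $L(v_+)\ge0$. To rule out $L(v_+)>0$, we use Corollary\,\ref{cip} and Lemma\,\ref{wpositiv}: then $r_{v_+}$ is strictly monotone on some $(-\infty,x_0]$ and leaves the strip $[0,(g+1)\tfrac{\pi}{g}]$ at a finite point. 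This exit is stable under perturbation, since the crossing of a level $k\pi$ with nonzero speed persists. So for $v<v_+$ close by, $r_v$ would also reach a value $k\pi$, giving $W_v\ge0$ there. Combined with the monotonicity of $W_v$, we would try to push this to $L(v)\ge0$, contradicting Lemma\,\ref{wneg}'s dichotomy; making this last step rigorous is where we expect most care. The negative side, $v_-=\inf\{v<0: L<0 \text{ on } (v,0]\}$, is handled identically, and the strict inequalities $v_+>0>v_-$ follow from $L(0)<0$.
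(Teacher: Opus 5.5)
Your Steps 1 and 2 are the paper's argument: $L(0)<0$ from $W_0(0)\le 0$ and monotonicity on the negative axis, and $L(v)>0$ for $\abs{v}$ large from a $v$-independent bound on $W_v(0)-W_v(-\infty)$. Step 3, however, has a genuine gap. If $r_v$ hits $k\pi$ at some finite $x_1<0$, you get $W_v(x_1)\ge 0$. Monotonicity of $W_v$ then only gives $W_v\ge 0$ on $[x_1,0]$. Since $L(v)=\inf_{x\le 0}W_v(x)$ is approached as $x\to-\infty$, i.e.\ to the \emph{left} of $x_1$, finitely many crossings say nothing about the sign of $L(v)$. This is also why Lemma~\ref{wneg} yields no contradiction: it only says that a solution with $L(v)<0$ avoids the levels $k\pi$ on some half-line $(-\infty,x_1]$, which is compatible with crossings at finite points. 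So upper semicontinuity alone does not exclude $L(v_+)>0$.

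The missing ingredient is continuity of $L$, and it is easy. For $m=1$ the term $(m-1)\tanh x\,r'(x)^2$ in $W'$ drops out, so $\abs{W_v'(x)}\le \tfrac{4(g-1)}{g^2\cosh x}$ for every $v$. Hence
\[
\sup_{v\in\R}\abs{W_v(x)-L(v)}\le\int_{-\infty}^{x}\frac{4(g-1)}{g^2\cosh\xi}\,d\xi\longrightarrow 0\qquad(x\to-\infty).
\]
The maps $v\mapsto W_v(x)$ are continuous by continuous dependence on initial data; solutions are global since $\abs{r''}\le 1$ when $m=1$. So $L$ is a uniform limit of continuous functions, hence continuous. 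The intermediate value theorem on $[0,V]$ and $[-V,0]$ then gives $v_\pm$ directly, as in the paper, and your $\sup$-construction also closes, since $L<0$ on $[0,v_+)$ forces $L(v_+)\le 0$. You already had this tool: it is the same $v$-independent estimate you use in Step 2, applied on $(-\infty,x]$ instead of $(-\infty,0]$.

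There is also a smaller point in Step 1. For $g=2$ one has $h_1(0)=2\cos\tfrac{\pi}{2}=0$ and $h_2\equiv 0$, so $W_0(0)=0$ exactly, not negative. You need the \emph{strict} increase of $W_0$ from Lemma~\ref{wincrease} to conclude $L(0)<W_0(0)=0$. Your fallback of using a small $v$ would not help, since $W_v(0)=\tfrac12 v^2\ge 0$ for $g=2$. For $g\in\{3,4,6\}$ the number $W_0(0)$ is indeed negative, as you expected.
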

\begin{proof}
First we show that $L(0)<0$. Afterwards we verify that $L(v)>0$ if we chose $v\in\R$ with large enough absolute value.
Hence, the intermediate value theorem implies the existence of initial velocities $v_{+}>0$ and $v_{-}<0$
with $L(v_+)=L(v_-)=0$.

\smallskip

For $g=2$ we get $W_v(0)=\tfrac{1}{2}r'(0)^2=\tfrac{1}{2}v^2$. Hence, if we chose $v=0$ we get $W_0(0)=0$.
Consequently, Lemma\,\ref{wincrease} implies $W_0(-\infty)<0$. 
For $g\in\left\{3,4,6\right\}$ we get $W_v(0)=\tfrac{1}{2}r'(0)^2-c_g$, where $c_g$ is a positive constant.
Hence, if we chose $r'(0)=0$ we get $W_0(0)<0$.
Consequently, Lemma\,\ref{wincrease} implies $W_0(-\infty)<0$. By Subsection\,\ref{wr}, we obtain
$\lvert W'_v(x)\rvert \leq 4\tfrac{g-1}{g^2\cosh x}$
for all $x\in\R$ and thus
\begin{align*}
-2\pi\tfrac{g-1}{g^2}\leq W_v(0)-W_v(-\infty)\leq 2\pi\tfrac{g-1}{g^2}.
\end{align*}
Consequently, if we chose $v>0$ large enough we get $L(v)=W_v(-\infty)>0$.
Similarly, if we chose $v<0$ with large enough absolute value we get $L(v)=W_v(-\infty)>0$.

\smallskip

Since $L(v)$ depends continuously on $v$, the intermediate value theorem implies
the existence of a $v_{+}>0$ with $r'(0)=v_{+}$ such that $L(v_+)=0$.
Furthermore, there exists $v_{-}<0$ with $r'(0)=v_{-}$ such that $L(v_-)=0$.
\end{proof}

The numerical outcomes show that the assertion of the previous lemma holds true also under the assumption that $r$ is point symmetric with respect to $P_{-2}^x=(0,(1\!-\!2g)\tfrac{\pi}{2g})$. However, the previous proof cannot be applied in this case, as $W_0(0)>0$ for $g\in\left\{3,4,6\right\}$.

\begin{Lem}
\label{wneg2}
Let $g\in\left\{2,3,4,6\right\}$ and $r$ be point symmetric with respect to the point $P_{-2}^x=(0,(1\!-\!2g)\tfrac{\pi}{2g})$. Then
there exist $v_{\pm}\in\R$ with $r'(0)=v_{\pm}$ and $v_{-}<v_{+}$ such that $L(v_{\pm})=0$.
\end{Lem}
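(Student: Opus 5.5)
We follow the strategy of Lemma\,\ref{wneg1}, again using the intermediate value theorem for the continuous function $L(v)=W_v(-\infty)$. Since $W_0(0)>0$ for $g\in\{3,4,6\}$, the value $v=0$ cannot serve as a point with $L<0$. We therefore look for a different initial velocity $v_*$ with $L(v_*)<0$. Once such a $v_*$ is found, the bound $\lvert W_v(0)-W_v(-\infty)\rvert\leq 2\pi\tfrac{g-1}{g^2}$ from the proof of Lemma\,\ref{wneg1} gives $L(v)>0$ for $v\to\pm\infty$. The intermediate value theorem on $(-\infty,v_*)$ and on $(v_*,\infty)$ then yields $v_-<v_*<v_+$ with $L(v_\pm)=0$.

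The natural candidate for $v_*$ is suggested by the other linear solution. The curve $r(x)=-(g-1)a(x)+\pi$ passes through $P_{-1}$, and shifting by $-\pi$ and applying the reflection of Lemma\,\ref{reflect} produces a solution of the $(g,1)$-ODE through $P_{-2}$. This is because $2y_{-2}-2y_{-1}=-\pi$, so the map $x\mapsto 2y_{-2}-r(-x)$ transports solutions through $P_{-1}$ into solutions through $P_{-2}$ up to a shift by a multiple of $\pi$. Since the ODE is invariant under $r\mapsto r+\pi$, this gives an explicit solution $r_{\mathrm{lin}}$ through $P_{-2}$ with known velocity $v_{\mathrm{lin}}=-\tfrac{g-1}{g}$ and with $\lim_{x\to-\infty}r_{\mathrm{lin}}(x)\in\pi\Z$. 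For it, $W(-\infty)=\tfrac12 r'(-\infty)^2-\tfrac{1}{2g}\big(h_1(-\infty)\sin^2 r(-\infty)+h_2(-\infty)\sin^2(r(-\infty)-\tfrac{3\pi}{4})\big)=0$, since $r'\to0$, $\sin r\to 0$ and $h_2(-\infty)=0$. Hence $L(v_{\mathrm{lin}})=0$, which already supplies one of the two required values.

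It remains to produce a second zero. We would show that $L$ changes sign near $v_{\mathrm{lin}}$, or failing that, that $L(v_*)<0$ at some other point. First we would compute $L$ at nearby velocities. Consider a solution with $v$ slightly off $v_{\mathrm{lin}}$ in the direction that keeps $r$ away from $\pi\Z$ as $x\to-\infty$. By Lemma\,\ref{wneg} and Corollary\,\ref{cip}, such a solution has $L\le 0$ precisely when it stays bounded. Using the strict monotonicity of $W$ on $(-\infty,0]$ (Lemma\,\ref{wincrease}) together with an explicit estimate of $W_v(0)$ as a quadratic in $v$, namely $W_v(0)=\tfrac12 v^2-c$ with $c$ computed from $h_1(0),h_2(0)$ and $r(0)=y_{-2}$, we would aim to show that $L(v)<0$ on a one-sided neighbourhood of $v_{\mathrm{lin}}$.

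The main obstacle is exactly this sign analysis at the degenerate point $v_{\mathrm{lin}}$, where $L$ vanishes. To handle it, I would first try linearizing the ODE around $r_{\mathrm{lin}}$ as $x\to-\infty$. There the equation becomes $r''\approx -\tanh(x)\cdot 0\cdot r'+\cos(2r)\cdot\ldots+ \sin(2r)$, with $h_1\to g$ and $\tfrac{1}{2g}\sin(2r)h_1\approx r-k\pi$. This is a saddle with eigenvalues $\pm1$, so perturbations of $v$ move the solution off the stable manifold to one side or the other. On one side $r$ crosses $k\pi$ with positive $W$, and on the other side it turns back, giving $W<0$. Once this is established, one gets $L(v_*)<0$ for some $v_*$ beside $v_{\mathrm{lin}}$. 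The intermediate value theorem applied toward $v\to\mp\infty$, beyond $v_*$ on the side opposite to $v_{\mathrm{lin}}$, then gives the second zero, and $v_-<v_+$ follows by labelling.
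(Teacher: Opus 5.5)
Your anchor point is misidentified. The symmetries of the $(g,1)$-ODE are the shift $r\mapsto r+\pi$ and the reflections $r\mapsto 2y_j-r(-\,\cdot\,)$, and they send $P_i$ to some $P_{\pm i+2k}$. They therefore preserve the parity of the index and cannot carry a solution through $P_{-1}$ to one through $P_{-2}$; indeed $2y_{-2}-y_{-1}=y_{-3}$. (Also, $-(g-1)a+\pi$ passes through $P_1$; it is $-(g-1)a$ that passes through $P_{-1}$.) The only linear solution through $P_{-2}$ is $a(x)-\pi$, with $r'(0)=\tfrac1g$, and your computation of $W(-\infty)$ does give $L(\tfrac1g)=0$. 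So one zero comes for free, but at $v=\tfrac1g$, not at $-\tfrac{g-1}{g}$. This part is repairable.

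The genuine gap is the second zero: you never produce a $v_*\neq\tfrac1g$ with $L(v_*)<0$. Your first mechanism cannot work. At $P_{-2}$ with $g\in\{3,4,6\}$ one has $W_v(0)=\tfrac12v^2+c$ with $c>0$ (e.g.\ $c=\tfrac{1+\sqrt3}{24}$ for $g=3$). Hence $W_v(0)>0$ for every $v$, and monotonicity of $W$ on $(-\infty,0]$ only gives $L(v)<W_v(0)$, which says nothing about the sign of $L(v)$; this is exactly why Lemma\,\ref{wneg1} does not transfer. Your second mechanism is only a heuristic. It presupposes that varying $r'(0)$ with $r(0)=y_{-2}$ fixed is transverse to the curve of initial data whose solutions tend to the saddle $(-\pi,0)$ as $x\to-\infty$. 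It also presupposes that the non-autonomous drift $\int_{-\infty}^{x}W'$ is negligible compared with $\lvert v-\tfrac1g\rvert$, so that the sign of $L$ equals the sign of the transverse displacement. Without both facts, $L$ might merely touch $0$ at $\tfrac1g$, and you would have only one zero. The transversality does in fact hold: along $a-\pi$ the variational equation is $\xi''=\bigl(1-\tfrac{2}{g\cosh^2x}\bigr)\xi$, whose solution decaying at $-\infty$ is positive and increasing, so $\xi(0)>0$. A careful saddle/Wronskian analysis should then give $L<0$ just to the left of $\tfrac1g$, but none of this is carried out in your proposal.

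The paper avoids the local analysis altogether. It takes $v=-\epsilon$ and confines $r$ to an explicit interval near $x=0$ using $\lvert r''\rvert\le\tfrac12$. It then bounds $\int_0^\infty W'_{-\epsilon}$ by explicit constants, which gives an upper bound for $W_{-\epsilon}(\infty)$. Finally, it uses the point symmetry to show that $W_v(\infty)-W_v(-\infty)$ is a constant independent of $v$ (e.g.\ $\tfrac18(3+\sqrt3)$ for $g=3$), so that $W_{-\epsilon}(-\infty)<0$ for small $\epsilon$. That $v$-independent total increment of $W$, coming from the symmetry, is the idea your proposal is missing.
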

\begin{proof}
The proof concept is identical to that of the proof of Lemma\,\ref{wneg1}.
For $g=2$, the proof from Lemma\,\ref{wneg1} applies, leaving us with the cases where $g\in\left\{3,4,6\right\}$.

\smallskip

We treat the three possible cases for $g$ separately. 
We show the existence of $v_0\in\R$ such that $W_{v_0}(-\infty)<0$ as follows: for every $v\in\R$, we establish that $W_v(\infty)-W_v(0)<c_1$, where $c_1$ is a constant in $\R$. We also establish that $c_2:=W_v(\infty)-W_v(-\infty)$, which does not depend on $v$.
Subsequently, we demonstrate that it is possible to find a value $v_0\in\R$ for which the following holds: $W_{v_0}(\infty)<c_1+W_{v_0}(0)<c_2$. Therefore, $W_{v_0}(-\infty)<0$, which is what we wanted to show.
The proof of the lemma is then completed by proceeding as in the proof of Lemma\,\ref{wneg1}.

\smallskip

To begin with, let us assume $g=3$ and $r'(0)=-\epsilon$, where $\epsilon>0$.
Then, there is a point $x_0\in\R$ for which $r(x_0)=-\pi$, and for every $x\in\left[0,x_0\right]$, it holds that $-\pi\leq r(x)\leq-\tfrac{5\pi}{6}$.
This can be easily derived from $r(0)=-\tfrac{5\pi}{6}$ and $r'(0)<0$, along with the $(g,1)$-ODE.
Moreover, the $(g,1)$-ODE entails that for every $x\in\R$, $r''(x)$ lies within the bounds of $-\tfrac{1}{2}$ and $\tfrac{1}{2}$. Therefore, for every $x\in\R$, it holds that $-\tfrac{1}{2}x\leq r'(x)-r'(0)\leq\tfrac{1}{2}x$.
Hence, we obtain
$-1\leq r'(x)\leq 1$ for all $x$ with $0\leq x\leq 2-2\epsilon$.
Thus we get $x_0\geq\frac{\pi}{6}$. Below let $x_0=\frac{\pi}{6}$.
Since $\lvert r'(x)\lvert\leq 1$ for $0\leq x\leq 2-2\epsilon$ we get
$-\frac{7\pi}{6}\leq r(x)\leq -\frac{5\pi}{6}$ for all $x\in\left[0,\frac{\pi}{3}\right]$.
Using the identity for $W_v'(x)$ derived in Subsection\,\ref{wr} we get
\begin{align*}
\int_0^xW_v'(\xi)d\xi\leq\,&\tfrac{1}{4}\int_0^{\tfrac{\pi}{3}}\tfrac{2}{9\cosh\xi}\big(\sin2a(\xi)+\sin4a(\xi)\big)d\xi\\
+&\,\tfrac{1}{8}(1+\sqrt{3})^2\int_0^{\tfrac{\pi}{3}}-\tfrac{2}{9\cosh\xi}\big(-\cos2a(\xi)+\cos4a(\xi)\big)d\xi\\
+&\,\int_{\frac{\pi}{3}}^{\infty}\tfrac{2}{9\cosh\xi}\big(\sin2a(\xi)+\sin4a(\xi)\big)d\xi\\
+&\,\int_{\frac{\pi}{3}}^{\infty}-\tfrac{2}{9\cosh\xi}\big(-\cos2a(\xi)+\cos4a(\xi)\big)d\xi
\end{align*}
where we make use of $\sin^2(r)\leq\tfrac{1}{4}$ and $\sin^2(r-\tfrac{3\pi}{4})\leq\tfrac{1}{8}(1+\sqrt{3})^2$ as long as
$-\tfrac{7\pi}{6}\leq r\leq -\tfrac{5\pi}{6}$.
Thus, $W_{-\epsilon}(x)\leq W_{-\epsilon}(0)+0.41\leq\tfrac{1}{2}\epsilon^2+0.53$ for all $x\geq 0$. Hence,
$W_{-\epsilon}(\infty)\leq \frac{1}{2}\epsilon^2+0.53.$
Furthermore, one proves $W_v(\infty)=W_v(-\infty)+\tfrac{1}{8}(3+\sqrt{3})$ for all $v\in\R$.
Consequently, $W_{-\epsilon}(-\infty)\leq\tfrac{1}{2}\epsilon^2+0.53-\tfrac{1}{8}(3+\sqrt{3})$.
Thus, for a sufficiently small value of $\epsilon>0$, we have $W_{-\epsilon}(-\infty)<0$.

\smallskip

We will assume $g=4$ and $r'(0)=-\epsilon$, where $\epsilon>0$.
Then there is a point $x_0\in\R$ such that $r(x_0)=-\frac{5\pi}{4}$ and for all $x\in\left[0,x_0\right]$, $-\frac{5\pi}{4}\leq r(x)\leq-\frac{7\pi}{8}$.
This can be easily deduced from $r(0)=-\frac{7\pi}{8}$ and $r'(0)<0$, along with the $(g,1)$-ODE.
Moreover, the $(g,1)$-ODE leads to the conclusion that $-\tfrac{1}{2}\leq r''(x)\leq\tfrac{1}{2}$ for every $x\in\R$. Therefore, for every $x\in\R$, it holds that $-\tfrac{1}{2}x\leq r'(x)-r'(0)\leq\tfrac{1}{2}x$. Thus, for every $x$ such that $0\leq x\leq 2-2\epsilon$, it holds that $-1\leq r'(x)\leq 1$.
So we obtain $x_0\geq\tfrac{3\pi}{8}$. In the following, we assume that $x_0=\tfrac{3\pi}{8}$.
Therefore, it holds that $-\tfrac{5\pi}{4}\leq r(x)\leq -\tfrac{7\pi}{8}$ for every $x\in\left[0,\tfrac{3\pi}{8}\right]$.
Using the identity for $W_v'(x)$ derived in Subsection\,\ref{wr}, we get
\begin{align*}
\int_0^xW_v'(\xi)d\xi\leq\,&\sin^2\tfrac{7\pi}{8}\int_0^{\tfrac{\pi}{4}}\tfrac{3}{16\cosh\xi}\big(\sin2a(\xi)+\sin4a(\xi)\big)d\xi\\
+&\,\tfrac{1}{2}\int_{\tfrac{\pi}{4}}^{\tfrac{3\pi}{8}}\tfrac{3}{16\cosh\xi}\big(\sin2a(\xi)+\sin4a(\xi)\big)d\xi\\
+&\,\int_{\tfrac{3\pi}{8}}^{\infty}\tfrac{3}{16\cosh\xi}\big(\sin2a(\xi)+\sin4a(\xi)\big)d\xi\\
+&\,\int_0^{\infty}-\tfrac{3}{16\cosh\xi}\big(-\cos2a(\xi)+\cos4a(\xi)\big)d\xi,
\end{align*}
where we make use of $\sin^2r\leq\sin^2(\tfrac{7\pi}{8})$ for $-\tfrac{9\pi}{8}\leq r\leq -\tfrac{7\pi}{8}$ and $\sin^2r\leq\tfrac{1}{2}$ for
$-\tfrac{5\pi}{4}\leq r\leq -\tfrac{9\pi}{8}$. 
Consequently, we specifically take $r(\tfrac{\pi}{4})=-\tfrac{9\pi}{8}$ as given. In all other possible scenarios, the aforementioned inequality improves further.
Therefore, for all $x\geq 0$, it holds that $W_{-\epsilon}(x)\leq W_{-\epsilon}(0)+0.34\leq \tfrac{1}{2}\epsilon^2+0.47$.
Thus, $W_{-\epsilon}(\infty)\leq \tfrac{1}{2}\epsilon^2+0.47$.
Given that $W_v(\infty)=W_v(-\infty)+\tfrac{1}{2}$ for every $v\in\R$, we find that $W_{-\epsilon}(-\infty)\leq \tfrac{1}{2}\epsilon^2+0.47-\tfrac{1}{2}$. Therefore, if $\epsilon>0$ is sufficiently small, it follows that $W_{-\epsilon}(-\infty)<0$.

\smallskip

We will take $g=6$ and $r'(0)=-\epsilon$ with $\epsilon>0$ in what follows.
Then there is a point $x_0\in\R$ such that $r(x_0)=-\tfrac{5\pi}{4}$ and for all $x\in\left[0,x_0\right]$, it holds that $-\tfrac{5\pi}{4}\leq r(x)\leq-\tfrac{11\pi}{12}$.
This can be easily derived from $r(0)=-\tfrac{11\pi}{12}$ and $r'(0)<0$, along with the $(6,m)$-ODE.
Moreover, this ODE entails that $-\tfrac{1}{2}\leq r''(x)\leq\tfrac{1}{2}$ for every $x\in\R$. Therefore, for every $x\in\R$, it holds that $-\tfrac{1}{2}x\leq r'(x)-r'(0)\leq\tfrac{1}{2}x$. Thus, for every $x$ such that $0\leq x\leq 2-2\epsilon$, it holds that $-1\leq r'(x)\leq 1$.
Hence, we obtain $x_0\geq\tfrac{\pi}{3}$. 
For our further considerations, we take $x_0=\tfrac{\pi}{3}$.
Hence, we have $-\tfrac{5\pi}{4}\leq r(x)\leq -\frac{11\pi}{12}$ for all $x\in\left[0,\tfrac{\pi}{3}\right]$.
Using the identity for $W_v'(x)$ derived in Subsection\,\ref{wr} we get
\begin{align*}
\int_0^xW_v'(\xi)d\xi\leq\,&\sin^2\tfrac{11\pi}{12}\int_0^{\tfrac{\pi}{6}}\tfrac{5}{36\cosh\xi}\big(\sin2a(\xi)+\sin4a(\xi)\big)d\xi\\
+&\,\sin^2\tfrac{5\pi}{4}\int_{\tfrac{\pi}{6}}^{z_1}\tfrac{5}{36\cosh\xi}\big(\sin2a(\xi)+\sin4a(\xi)\big)d\xi\\
+&\,\sin^2\tfrac{5\pi}{3}\int_0^{\tfrac{\pi}{6}}-\tfrac{5}{36\cosh\xi}\big(-\cos2a(\xi)+\cos4a(\xi)\big)d\xi\\
+&\,\tfrac{1}{4}\int_{\tfrac{\pi}{6}}^{\tfrac{\pi}{3}}-\tfrac{5}{36\cosh\xi}\big(-\cos2a(\xi)+\cos4a(\xi)\big)d\xi\\
+&\,\int_{\tfrac{\pi}{3}}^{\infty}-\tfrac{5}{36\cosh\xi}\big(-\cos2a(\xi)+\cos4a(\xi)\big)d\xi,
\end{align*}
where $z_1$ defines the unique zero of $h_1'(x,6)$. Furthermore, we make use of $\sin^2r\leq\sin^2(\tfrac{11\pi}{12})$ and $\sin^2(r-\tfrac{3\pi}{4})\leq\sin^2(\tfrac{5\pi}{3})$ for $-\tfrac{13\pi}{12}\leq r\leq -\tfrac{11\pi}{12}$;
$\sin^2r\leq \sin^2(\tfrac{5\pi}{4})$ and $\sin^2(r-\tfrac{3\pi}{4})\leq \tfrac{1}{4}$ for
$-\tfrac{5\pi}{4}\leq r\leq -\tfrac{13\pi}{12}$.
We in particular assume $r(\tfrac{\pi}{6})=-\tfrac{13\pi}{12}$. In all other possible scenarios, the aforementioned inequality improves further.
In consequence, for every $x\geq 0$ it holds that $ W_{-\epsilon}(x)\leq W_{-\epsilon}(0)+0.19\leq \tfrac{1}{2}\epsilon^2+0.3 $, which implies $W_{-\epsilon}(\infty)\leq \tfrac{1}{2}\epsilon^2+0.3$.
As $W_v(\infty)=W_v(-\infty)+\tfrac{1}{8}(1+\sqrt{3})$ is true for every $v\in\R$, this implies that $W_{-\epsilon}(-\infty)\leq \tfrac{1}{2}\epsilon^2+0.3-\tfrac{1}{8}(1+\sqrt{3})$.
Therefore, for sufficiently small $\epsilon>0$, it holds that $W_{-\epsilon}(-\infty)<0$.

\smallskip

For each case \( g \in \{ 3, 4, 6 \} \), we can now show the assertion using the same method as in Lemma\,\ref{wneg1}.
\end{proof}

\begin{Lem}
\label{lneg}
Let $g\in\left\{2,3,4,6\right\}$.
Suppose that $r$ is point symmetric with respect to $(0,(g+1)\tfrac{\pi}{2g})$. Let $v\in\R$ with $r'(0)=v$ be such that $L(v)<0$. Then there exists $k\in\Z$ and $x_0\in\mathbb{R}$ such that
$(k-1)\pi<r(x)<k\pi$ for all $x\leq x_0$.
\end{Lem}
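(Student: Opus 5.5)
The plan is to exploit the fact that $W_v$ is nonnegative whenever $r_v$ hits an integer multiple of $\pi$. Combined with $L(v)=W_v(-\infty)<0$, this forces $r_v$ to stay, for all sufficiently negative $x$, strictly inside one open interval $((k-1)\pi,k\pi)$.

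First, write
\[
W_v(x)=\tfrac12 r_v'(x)^2-\tfrac{1}{2g}h_1(x)\sin^2 r_v(x)-\tfrac{1}{2g}h_2(x)\sin^2\big(r_v(x)-\tfrac{3\pi}{4}\big).
\]
At a point $x$ with $r_v(x)=k\pi$ the middle term vanishes. We then get $W_v(x)=\tfrac12 r_v'(x)^2-\tfrac{1}{2g}h_2(x)\cdot\tfrac12$. By Lemma\,\ref{h2}, either $h_2\equiv 0$ (for $g=2$) or $h_2<0$ (for $g\in\{3,4,6\}$), so $W_v(x)\geq 0$ in every case. This is exactly the observation already used in the proof of Lemma\,\ref{wneg}.

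Second, set $w:=L(v)<0$. Since $W_v(x)\to w$ as $x\to-\infty$ by Lemma\,\ref{wnegativ}, there is an $x_0\in\R$ with $W_v(x)<w/2<0$ for all $x\leq x_0$. By the first step, $r_v(x)\notin\pi\Z$ for all $x\leq x_0$. The function $r_v$ is continuous on the connected set $(-\infty,x_0]$ and avoids $\pi\Z$ there, so its image lies in a single open interval $((k-1)\pi,k\pi)$ for some $k\in\Z$. This is the claim. Boundedness of $r_v$ from Lemma\,\ref{wneg} is consistent with this but is not even needed. The hypothesis that $r$ is point symmetric with respect to $(0,(g+1)\tfrac{\pi}{2g})$ only serves to place us in the setting where $L$ is defined via the initial data at $0$. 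One could additionally use it to pin down $k$, e.g.\ $k\in\{0,1,2\}$ by symmetry and the bound $0\le r\le (g+1)\pi/g$ suggested numerically, but the statement does not require this.

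The only point needing care, and the one I regard as the main (mild) obstacle, is the sign of the $h_2$-term at points with $r_v=k\pi$: the inequality $W_v\ge0$ there relies on $h_2\le 0$ everywhere, which Lemma\,\ref{h2} provides. For $g=6$ one should check that nothing about the nonmonotonicity of $W$ on the positive axis interferes. It does not, because we only work near $-\infty$ and use only the existence of the limit $W_v(-\infty)$ from Lemma\,\ref{wnegativ}.
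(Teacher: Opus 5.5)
Your proof is correct, but it reaches the conclusion by a somewhat different route than the paper.

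The paper argues asymptotically. Since $h_1(x)\to g$ and $h_2(x)\to 0$ as $x\to-\infty$, the limit $L(v)$ equals $\lim_{x\to-\infty}\bigl(\tfrac12 r'(x)^2-\tfrac12\sin^2 r(x)\bigr)$. Negativity of this limit forces $\sin^2 r(x)$ to be bounded below by a positive constant, e.g.\ $\sin^2 r(x)>\abs{L(v)}$, for all sufficiently negative $x$. The passage to a single interval $((k-1)\pi,k\pi)$ is left implicit in ``whence the claim''.

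You instead use the exact, non-asymptotic inequality $W_v\geq 0$ at every point where $r_v\in\pi\Z$. This is the observation already used in the proof of Lemma~\ref{wneg}, and it relies on the global sign $h_2\leq 0$ from Lemma~\ref{h2}. You also spell out the connectedness step the paper skips.

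The trade-off is this. The paper's version needs only the decay of $h_2$ at $-\infty$, not its sign. It also gives the stronger conclusion that $r$ stays uniformly away from $\pi\Z$ near $-\infty$. Yours is elementary and fully explicit, but it depends on $h_2$ having a fixed sign, which Lemma~\ref{h2} supplies for all $g\in\{2,3,4,6\}$.
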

\begin{proof}
By Lemma\,\ref{wnegativ} the limit $\lim_{x\rightarrow -\infty}W(x)=\lim_{x\rightarrow -\infty}(\frac{1}{2}r'(x)^2-\frac{m}{2}\sin^2r(x))$ exists.
By assumption this limit is negative, whence the claim.
\end{proof}

Having the preparation ready, we can now prove Theorem\,\ref{thm11}:

\begin{Thm}
\label{thm1}
For every $g\in\{2,3,4,6\}$, there are four (not necessarily distinct) values of $k\in\mathbb{Z}$ such that the $(g,1,k)$-boundary value problem has a solution.
\end{Thm}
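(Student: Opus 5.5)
The plan is to produce, for each $g\in\{2,3,4,6\}$, four initial value problems at $x=0$. Each one yields a solution $r$ of the $(g,1)$-ODE with finite limits at $\pm\infty$ of the required form, and hence a solution of some $(g,1,k)$-boundary value problem. Two of them come for free: the linear solutions $r(x)=a(x)$ and $r(x)=-(g-1)a(x)+\pi$ solve the $(g,1,1)$- and $(g,1,1-g)$-problems. The other two come from the symmetric problems centred at $P_1=(0,(g+1)\tfrac{\pi}{2g})$ and at $P_{-2}=(0,(1-2g)\tfrac{\pi}{2g})$.

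For the point $P_1$, Lemma\,\ref{wneg1} gives an initial velocity $v_+>0$ with $L(v_+)=0$. For the point $P_{-2}$, Lemma\,\ref{wneg2} gives velocities $v_\pm$ with $L(v_\pm)=0$. I will choose the critical velocity at the boundary of the set $\{v: L(v)<0\}$. That is, I take $v^\ast=\sup\{v>0: L(v)<0 \text{ on } [0,v)\}$ for $P_1$, and the analogous boundary value for $P_{-2}$. This choice lets me approximate $r_{v^\ast}$ by solutions $r_v$ with $L(v)<0$. By Lemma\,\ref{wneg} and Lemma\,\ref{lneg}, those solutions are bounded and trapped near $-\infty$ in a strip $(k-1)\pi<r<k\pi$. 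By continuous dependence on $v$, the solution $r_{v^\ast}$ then stays inside the closed strip on any finite interval $(-\infty,x_0]$.

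The core step is to show that $L(v^\ast)=0$ forces $\lim_{x\to-\infty}r_{v^\ast}(x)$ to exist and equal some $k\pi$. I will argue with $W$. Since $W\to 0$ and $W$ is monotone on the negative axis (Lemma\,\ref{wincrease}), we get $\tfrac12 r'^2-\tfrac12\sin^2 r\to 0$ asymptotically, using $h_1\to g$ and $h_2\to 0$. Thus $(r,r')$ approaches the level set of the pendulum energy through the saddles $(k\pi,0)$. I also need to rule out the possibility that $r$ keeps traversing a separatrix, that is, oscillating or moving monotonically from saddle to saddle. The trapping in a strip kills the monotone escape. A strictly increasing $W$ that tends to $0$ from below rules out repeated excursions, because along a homoclinic loop $\sin^2 r$ is bounded away from $0$ on a set of definite length, which gives a definite increment of $W$ each time. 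Hence $r\to k\pi$ as $x\to-\infty$, and by Lemma\,\ref{gw} this is one of the allowed limits. Point symmetry (Lemma\,\ref{sym}) then yields $\lim_{x\to\infty}r=2y_j-k\pi$, which has the form $\tfrac{(k'g+1)\pi}{g}$. After the shift described after Lemma\,\ref{sym}, we obtain a solution of a $(g,1,k)$-boundary value problem.

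For $g=6$, $W$ is monotone only on the negative half-line. That suffices, because all the asymptotic analysis happens at $-\infty$ and the behaviour at $+\infty$ follows by symmetry.

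I expect the main obstacle to be this limit argument: showing that $W_{v^\ast}(-\infty)=0$ really gives convergence of $r$ to a multiple of $\pi$, rather than slow oscillation along the heteroclinic orbits of the limiting pendulum equation $r''=-\tanh x\,r'+\tfrac12\sin 2r$. Note that $m-1=0$, so the damping term actually drops out when $m=1$. My first attempt will be to use the explicit decay $\abs{W'}\lesssim 1/\cosh x$, so that the limiting system is a conservative pendulum with exponentially small perturbation. Standard stable-manifold theory at the hyperbolic saddle $(k\pi,0)$ then shows that an orbit with energy tending to the saddle level and confined to one strip must lie on the stable manifold of that saddle (in backward time). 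I do not claim that the four values of $k$ are distinct, which matches the statement. Each resulting map is harmonic by Theorem\,E together with Theorem\,H of \cite{MR4000241}, since $(g,1,1)$-actions have vanishing tangential part.
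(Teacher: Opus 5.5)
Your route is the paper's route: zeros of $L$ bordering the set $\{L<0\}$ (Lemmas~\ref{wneg1}, \ref{wneg2}), strip-trapping for $L<0$ (Lemmas~\ref{wneg}, \ref{lneg}), continuous dependence on $v$, and $W_{v^\ast}(-\infty)=0$. The only difference is that you take two of the four values from the linear solutions instead of from both zeros $v_\pm$ at each centre. The paper states the last step, that $L(v^\ast)=0$ forces $r_{v^\ast}\to k\pi$ at $-\infty$, in a single sentence. You rightly identify it as the crux; taken literally, the theorem is already satisfied by $a(x)$ alone, so this step is the whole content. However, neither mechanism you give for it works.

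\emph{Trapping does not pass to the limit.} The argument of Lemma~\ref{wneg} confines $r_v$ to a strip only where $W_v<0$, i.e.\ on $(-\infty,x_1(v))$, where $x_1(v)$ is the zero of $W_v$. Since $W_{v^\ast}$ is strictly increasing on the negative axis with limit $0$, you have $W_{v^\ast}>0$ on $(-\infty,0]$. So $W$ tends to $0$ from above, not below, and the orbit sits in the rotational region. Hence for each fixed $X$ one has $W_v(X)>0$ once $v$ is near $v^\ast$, i.e.\ $x_1(v)\to-\infty$. Continuous dependence only controls compact intervals, so it says nothing about $r_{v^\ast}$ on any half-line.

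\emph{The ``definite increment'' claim is false.} For $m=1$ one has $W'=-\tfrac{1}{2g}\big(h_1'\sin^2r+h_2'\sin^2(r-\tfrac{3\pi}{4})\big)$. Since $a(x)\sim\tfrac{2}{g}e^{x}$, this gives $h_1'=O(e^{2x})$ and $h_2'=O(e^{3x})$ as $x\to-\infty$. So the gain of $W$ per passage decays exponentially and is summable, and infinitely many passages are compatible with $W$ converging. Your stable-manifold fallback presupposes confinement to one strip, which is exactly what is unproved.

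What is missing is control of $r_v$ where $W_v\ge0$. Your choice of $v^\ast$ as the endpoint of the component of $\{L<0\}$ containing $0$ (resp.\ $-\epsilon$) is the right set-up for this. The strip index $k_v$ is locally constant on $\{L<0\}$: test at a point $\bar x$ with $W_{v_0}(\bar x)<0$ and use the monotonicity of $W_v$. Hence $k_v$ is constant on $[0,v^\ast)$.

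For $g=2$ the argument can then be completed as follows.
\begin{enumerate}
\item Here $h_2\equiv0$ and $h_1>0$ for $x<0$, so $W_v(x)>0$ forces $r_v'(x)\neq0$.
\item Therefore $r_v$ is strictly monotone on $[x_1(v),0]$ and lies in strip $k_v$ on $(-\infty,x_1(v)]$. This bounds, uniformly in $v\in[0,v^\ast)$, the number of multiples of $\pi$ crossed on $[x_1(v),0]$.
\item By the same reasoning, $r_{v^\ast}$ is strictly monotone on $(-\infty,0]$.
\item If $r_{v^\ast}$ crossed arbitrarily many multiples of $\pi$, then for $v$ near $v^\ast$ the function $r_v$ would copy these crossings on a compact interval $[X,0]$ with $x_1(v)<X$. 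This contradicts the uniform bound in step 2.
\item So $r_{v^\ast}$ is bounded and monotone, hence converges, and $r_{v^\ast}'\to0$. Lemma~\ref{gw} together with $W_{v^\ast}(-\infty)=0$ puts the limit in $\pi\Z$.
\end{enumerate}

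For $g\in\{3,4,6\}$ step 1 fails. Since $h_2<0$, the potential part of $W$ equals $\abs{h_2}/(4g)>0$ at $r=k\pi$. Thus $W>0$ is compatible with $r'=0$ near $k\pi$, and an additional estimate is needed in these cases.
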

\begin{proof}
For each possible choice of $g$, Lemmas \,\ref{wneg1} and \ref{wneg2} imply the existence of two real numbers $v_{\pm}\in\mathbb{R}$ with $v_-<v_+$ such that $L(v_{\pm})=0$.
Further, from the proofs of Lemmas \,\ref{wneg1} and \ref{wneg2} we have that there exist $v\in(v_-,v_+)$ such that $L(v)<0$.
From Lemma\,\ref{lneg}, the continuous dependence on $v$ and $\lim_{x\rightarrow -\infty}W_{v_{\pm}}(x)=\lim_{x\rightarrow -\infty}(\frac{1}{2}r'(x)^2-\frac{m}{2}\sin^2r(x))=0$ we obtain
the claim.
\end{proof}

\begin{Bem}
\label{bem1}
\begin{enumerate}
\item As mentioned above, the numerics indicate that the four values of $k\in\mathbb{Z}$ are different and are given by $1-2g,1-g,1,1+g$, but we did not prove this.
    \item Theorem\,\ref{thm1} establishes, by construction, the existence of four harmonic self-maps of $\s^n$ for $n\in\{3,4,5,7\}$. It should be noted that, for given $g\in\{2,3,4,6\}$, there are only two known solutions to the corresponding boundary problem. As a result, at least two of the constructed solutions are new (and, as previously stated, the numerics suggest that exactly two solutions are new). Note further that these solutions were not constructed in \cite{MR1436833, MR3427685}, as both papers only construct solutions with $m_0>1$.
\item 
Like the introduction already stated, it is known that harmonic self-maps of $\s^n$ exist for $n\in\{3,4,5,7\}$; see e.g. \cite{MR1436833, MR3427685}.
The construction method described in the aforementioned manuscripts, however, makes decisive use of the fact that for $g=1, 2\leq m\leq 5$ and $g=2, 2\leq m_0\leq 5, m_0\leq m_1$, the $(g,m_0,m_1)$-ODE is derived from an oscillation equation. This allows for the construction of an infinite number of solutions in these cases. Other constructions of harmonic maps, like R. Wood's construction of so-called eiconals found on pages 80 and 102 of \cite{MR2044031}, take advantage of special geometric situations.
To our knowledge, there are no results in the literature where the methods employed here are utilized to construct harmonic maps.
\end{enumerate}
\end{Bem}

Theorem\,\ref{thm1} also implies the existence of specific maps between special orthogonal groups.
The following is taken from \cite{MR4000241}:
Any isometric cohomogeneity one action $G\times \s^{n+1}\to \s^{n+1}$ can be lifted to an isometric cohomogeneity one action of $G\times\SO(n+1)$ on $\SO(n+2)$ with the metric $\tfrac{1}{2} \tr X\tp Y$ via \begin{gather*} G \times \SO(n+1) \times \SO(n+2) \to \SO(n+2), \quad \left( A,\bmat 1 & \\ & B\emat \right) \cdot C = AC\bmat 1 & \\ & B^{-1}\emat. \end{gather*}
The lift of a $(g,m_0,m_1)$-action on $\s^{n+1}$ is referred to as a {\em $(g,m_0,m_1)$-action on $\SO(n+2)$} (or simply a $(g,m)$-action if $m_0 = m_1$). 

For the $(g,m_0,m_1)$-action on $\SO(n+2)$ the following two theorems were obtained in \cite{MR4000241}:

\begin{Thm}[Theorem\,F of \cite{MR4000241}]
Given a $(g,m_0,m_1)$-action on $\SO(n+2)$ with $n = \frac{m_0+m_1}{2}g$, the normal component of the tension field of the map $g\cdot \tilde\gamma(2t)\to g\cdot\tilde\gamma(2r(t))$ vanishes if and only if $r$ solves the $(2g,m_0,m_1,k)$-boundary value problem.
\end{Thm}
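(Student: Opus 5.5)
The plan is to reduce the statement to a computation of the tension field of the lifted map, using the structure of the lift described just above. The lifted action of $G\times\SO(n+1)$ on $\SO(n+2)$ has orbit space isometric to that of the $G$-action on $\s^{n+1}$, up to rescaling. We choose the normal geodesic $\tilde\gamma$ in $\SO(n+2)$ as the horizontal lift of $\gamma$ under the Riemannian submersion $\SO(n+2)\to\s^{n+1}$, $C\mapsto Ce_1$. Concretely, $\tilde\gamma(t)$ is the rotation by angle $t$ in the plane spanned by $e_1$ and $\dot\gamma(0)$; with the metric $\tfrac12\tr X\tp Y$ this rotation has speed one. The principal orbits of the lifted action are then $G\cdot \tilde\gamma(t)\cdot \SO(n+1)$.

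First I would determine the metric endomorphisms $P_t$ along $\tilde\gamma$. The tangent space of an orbit splits into action fields coming from $\mathfrak{g}$ and from $\mathfrak{so}(n+1)$. Conjugating by $\tilde\gamma(t)$ mixes the $\mathfrak{so}(n+1)$ directions with the $\mathfrak{g}$ directions through rotations by angle $t$. As a result, the relevant eigenvalues of $P_t$ are of the form $\sin^2$ or $\cos^2$ of linear combinations of $t$ and of the principal angles $\tfrac{j\pi}{g}$ of the isoparametric family. The effect is that the $g$ distinct principal curvatures on $\s^{n+1}$ double to $2g$ angular data. I would take these $P_t$ from the structure theory in \cite{MR4000241}, reading off the eigenvalues from the root-type decomposition of the underlying rank-$2$ symmetric space.

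Second I would insert the $P_t$ into Theorem \ref{normaltwo}, with $A$ the identity homomorphism of $G\times\SO(n+1)$ and the map $g\cdot\tilde\gamma(2t)\mapsto g\cdot\tilde\gamma(2r(t))$. This gives the normal component $\ddot{\tilde r}+\tfrac12\dot{\tilde r}\tr P^{-1}\dot P-\tfrac12\tr P^{-1}\dot P_{\tilde r}$ with $\tilde r(s)=2r(s/2)$. Rescaling $s=2t$ will turn the terms $\cot$ and $\tan$ of multiples of $s$ into terms in $gt$ and $2gt$. The aim is to match coefficients, after multiplying through by $4\sin^2 2gt$, with the ODE (\ref{bvp-t}) for $(2g,m_0,m_1)$, and to check that the multiplicities are $m_0$ and $m_1$.

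Finally I would verify the boundary conditions. The orbit space of the lifted action is $[0,\tfrac{\pi}{2g}]$ in the variable $s$, which becomes $[0,\tfrac{\pi}{2g}]$ in $t$ after halving. The conditions $\tilde r(0)=0$ and $\tilde r$ ending at $k$ times the endpoint then translate into the $(2g,m_0,m_1,k)$ limits. The main obstacle is the first step: correctly identifying $P_t$ and its multiplicities on the lifted orbits, and confirming that the trace terms combine to the doubled trigonometric coefficients exactly. I expect to handle this through the sum-to-product identities $\sin 2gt=2\sin gt\cos gt$ and the identity $\sum_j\cot(t+j\pi/(2g))=2g\cot 2gt$, applied separately to the two multiplicity classes.
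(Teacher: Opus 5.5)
The paper gives no proof of this statement; it quotes it from \cite{MR4000241}. Your plan therefore has to stand on its own. Its architecture is sound: take the horizontal lift $\tilde\gamma$, apply Theorem~\ref{normaltwo}, rescale $s=2t$ and match. There is, however, a genuine gap exactly at the step you flag, and the form you predict for $P_t$ is not what happens.

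\textbf{Why reading off eigenvalues fails.} Along $\tilde\gamma(s)=\exp(sH_0)$, the action field of $(X,Y)\in\ag\oplus\mathfrak{so}(n+1)$ is $\tilde\gamma(s)\bigl(\mathrm{Ad}(\tilde\gamma(s)^{-1})X-Y\bigr)$. With respect to the biinvariant metric $\tfrac12\tr X\tp Y$ on both factors, the lifted endomorphism is therefore $\tilde P_s=I-\bmat 0&B_s^{*}\\ B_s&0\emat$, where $B_s=\mathrm{pr}_{\mathfrak{so}(n+1)}\circ\mathrm{Ad}(\tilde\gamma(s)^{-1})|_{\ag}$. One checks that $B_s^{*}B_s=I-P_s$, with $P_s$ the metric endomorphism of the sphere action. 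Hence the non-constant eigenvalues of $\tilde P_s$ are $1\pm\sqrt{1-\mu(s)}$, where $\mu$ runs over the eigenvalues of $P_s$.
\begin{itemize}
\item These are trigonometric squares, namely $2\sin^2\frac{s-j\pi/g}{2}$ and $2\cos^2\frac{s-j\pi/g}{2}$, only when $\mu=\sin^2(s-j\pi/g)$ exactly. That holds when $\mathfrak n$ consists of simple rotations, as for $g\le 2$.
\item For the $(3,1)$-action one finds $\mu=\tfrac45\sin^2(s-j\pi/3)$. No biinvariant metric then diagonalizes all $\tilde P_s$ simultaneously, so the eigenvectors move with $s$.
\item Since $\tr\tilde P_s^{-1}\dot{\tilde P}_{\tilde r(s)}$ couples two different parameters, inserting eigenvalues into Theorem~\ref{normaltwo} is then not legitimate, and in this example it gives the wrong equation.
\end{itemize}
The doubling $g\to 2g$ comes from half angles. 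It does not come from rotations through the angle $t$, nor from a rescaling of the orbit space. Full-angle data $\sin^2(s-\theta_j),\cos^2(s-\theta_j)$ would even give the wrong $\dot r$-coefficient. That coefficient must equal the sphere's mean-curvature term, because the fibres of $C\mapsto Ce_1$ are totally geodesic.

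\textbf{What is needed instead.} You need an $s$-independent basis that diagonalizes all the Gram matrices at once. For $g\ge3$ this basis is not orthonormal for any biinvariant metric. Write $a\wedge b$ for the skew map $y\mapsto\langle a,y\rangle b-\langle b,y\rangle a$. Let $\mathfrak k_j\subset\mathfrak n$ be the elements vanishing at $\gamma(\theta_j)$, where $\theta_j=j\pi/g$. Then:
\begin{itemize}
\item Each $X\in\mathfrak k_j$ has the form $X=Z+\dot\gamma(\theta_j)\wedge x$. Here $x=X\dot\gamma(\theta_j)$ lies in the $j$-th curvature distribution, and $Z\in\mathfrak{so}(n)$ acts on $\{\gamma(0),\dot\gamma(0)\}^\perp$.
\item One has $\mathrm{Ad}(\tilde\gamma(s)^{-1})X=Z+\dot\gamma(\theta_j-s)\wedge x$.
\item Set $W^{\pm}=(X,\,Z\pm\dot\gamma(0)\wedge x)$. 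Then $\lvert W^{+*}\rvert^2_{\tilde\gamma(s)}=4\lvert x\rvert^2\sin^2\frac{s-\theta_j}{2}$, $\lvert W^{-*}\rvert^2_{\tilde\gamma(s)}=4\lvert x\rvert^2\cos^2\frac{s-\theta_j}{2}$, and $\langle W^{+*},W^{-*}\rangle=0$.
\item The fields of $(0,Y)$ with $Y\in\mathfrak{so}(n)$ have constant length and are orthogonal to all $W^{\pm*}$. Choosing the $x$'s orthonormal makes every Gram matrix diagonal.
\end{itemize}
Only with this in hand does $\tfrac12\tr\tilde P_s^{-1}\dot{\tilde P}_{\tilde r(s)}$ become a sum over these half-angle functions, each with multiplicity $m_j$. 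After $s=2t$ and $\tilde r=2r$, you obtain the sphere-type equation with angles $i\pi/(2g)$, $i=0,\dots,2g-1$, and alternating multiplicities $m_0,m_1$. This is the $(2g,m_0,m_1)$-ODE, and your cotangent identities then finish the matching.

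\textbf{Boundary bookkeeping.} Your interval statement needs fixing. The lifted orbit space is $[0,\pi/g]$ in $s$, with no rescaling because the submersion is Riemannian. It is therefore $[0,\pi/(2g)]$ in $t$, and $\tilde r(\pi/g)=k\pi/g$ becomes $r(\pi/(2g))=k\pi/(2g)$.
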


\begin{Thm}[see Theorem\,H in \cite{MR4000241}]
Given a $(g,m_0,m_1)$-action on $\SO(n+2)$ the tangential component of the tension field of any $(k,r)$-map vanishes except possibly for
\begin{gather*}
  (g,m_0,m_1) \in \{ (4,2,2\ell+1), (4,4,4\ell+3), (4,4,5), (4,6,9) \}.
\end{gather*}
\end{Thm}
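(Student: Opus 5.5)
The plan is to derive the statement from the general formula for the tangential component in Theorem \ref{tanaltpart}, applied to the lifted $(g,m_0,m_1)$-action of $G\times\SO(n+1)$ on $\SO(n+2)$. The $(k,r)$-map is equivariant with respect to the identity homomorphism $A=\mathrm{id}$, so $dA_e E_\mu=E_\mu$. It therefore suffices to show that
\begin{align*}
\sum_{\mu}\,[E_\mu,\hat P_{r(t)}E_\mu]=0 ,
\end{align*}
where $E_\mu$ ranges over a basis of $\mathfrak n$ that is orthonormal for the metric $\hat Q(\cdot,\hat P_t\,\cdot)$ at time $t$.

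The first step is to describe $\mathfrak n$ and the metric endomorphisms $\hat P_t$ for the lifted action explicitly. Along the normal geodesic $\tilde\gamma$ in $\SO(n+2)$, the principal isotropy group $H$ acts on $\mathfrak n$. I would decompose $\mathfrak n$ into $H$-irreducible summands $\mathfrak n=\bigoplus_i\mathfrak n_i$ using the known structure of the rank-two symmetric-space isotropy representations that give the $(g,m_0,m_1)$-actions (Hsiang--Lawson). The summands are the root spaces with multiplicities $m_0,m_1$, together with the complementary part in $\mathfrak{so}(n+1)$, which is glued to them via the $\SO(n+1)$ factor. Since $\hat P_t$ is $\mathrm{Ad}_H$-equivariant, Schur's lemma applies. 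Whenever the summands $\mathfrak n_i$ are pairwise inequivalent, $\hat P_t$ acts on each $\mathfrak n_i$ as a scalar $\lambda_i(t)$, with a decomposition that does not depend on $t$.

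In that situation the claim follows directly. Choose $E_\mu=X_\mu/\sqrt{\lambda_i(t)}$, where the $X_\mu$ form a $\hat Q$-orthonormal basis of $\mathfrak n_i$. Then $\hat P_{r(t)}E_\mu=\lambda_i(r(t))E_\mu$, so every bracket $[E_\mu,\hat P_{r(t)}E_\mu]$ is a multiple of $[E_\mu,E_\mu]=0$, and the sum vanishes termwise. If some summands are equivalent but $\hat P_t$ is still simultaneously diagonalisable in a $t$-independent $\hat Q$-orthonormal basis, the same argument applies. For the $\SO(n+2)$ lift I expect this to follow from the explicit formula $\hat P_t=$ (restriction of $\tfrac12\tr X\tp Y$ transported by $\tilde\gamma(2t)$), which is built from $\cos$ and $\sin$ of multiples of $t$ acting on fixed planes.

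The main obstacle is the case analysis for equivalent $H$-summands. When two summands are isomorphic as $H$-modules, $\hat P_t$ can have off-diagonal blocks whose eigenvectors rotate with $t$. In that case the brackets $[E_\mu,\hat P_{r(t)}E_\mu]$ need not cancel, because $\hat P_t$ and $\hat P_{r(t)}$ are no longer simultaneously diagonal. I would check, action by action from the classification list, which $(g,m_0,m_1)$ admit such equivalent summands within $\mathfrak n$. The expectation is that this happens exactly for the listed cases $(4,2,2\ell+1)$, $(4,4,4\ell+3)$, $(4,4,5)$ and $(4,6,9)$, which arise from non-symmetric or quaternionic/Cayley isotropy situations. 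For those cases the theorem asserts nothing, so only this exclusion has to be verified. For all remaining cases the Schur argument above gives $\tau^{\tan}=0$.
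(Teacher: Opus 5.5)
The paper gives no proof of this statement; it is quoted from Theorem~H of \cite{MR4000241}. Judged on its own, your proposal has a genuine gap. Its only mechanism is Schur's lemma giving a $t$-independent diagonalisation of $\hat P_t$, and hence termwise vanishing of $[E_\mu,\hat P_{r(t)}E_\mu]$. That mechanism is not available for the lifted actions.

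Write $\tilde\gamma(s)=\exp(s\,e_1\wedge e_2)$, and let $V'$ be the orthogonal complement of the section $\mathrm{span}(e_1,e_2)$ in $\R^{n+2}$. The principal isotropy group of the lift is $\Delta H$, and $H$ fixes $e_1$ and $e_2$. Hence, as $H$-modules,
\[
\mathfrak n\cong(\ag\ominus\ah)\oplus\mathfrak{so}(n+1),\qquad \mathfrak{so}(n+1)\supset e_2\wedge V'\cong V'\cong\ag\ominus\ah,
\]
where the last isomorphism is $X\mapsto X\cdot\gamma(s)$ for regular $s$. So every isotypic type of $\ag\ominus\ah$ occurs at least twice in $\mathfrak n$, for every $(g,m_0,m_1)$. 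The metric also couples these copies with an $s$-dependent coefficient:
\[
\langle (X,0)^*,(0,e_2\wedge w)^*\rangle_{\tilde\gamma(s)}=\pm\langle X\cdot\dot\gamma(s),w\rangle .
\]
Consequently:
\begin{enumerate}
\item your Schur case never occurs on $\SO(n+2)$;
\item your predicted characterisation of the exceptional list (equivalent summands occur exactly there) is false;
\item what you call the main obstacle is in fact every case the theorem covers, and you give no argument for it.
\end{enumerate}

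Your fallback hope, that $\hat P_t$ is still diagonal in a fixed basis, also fails. Take the $(3,1,1)$-action of $\SO(3)$ on $\s^4\subset\mathrm{Sym}_0(3)$, lifted to $\SO(5)$; this is the case relevant for $\SO(5)$ in Theorem~\ref{so}. Here $H$ is the Klein four group and $\ah=0$. The isotypic component of $\mathfrak n$ for the character $d_1d_2$ is spanned by $A_{12}=E_{12}-E_{21}\in\mathfrak{so}(3)$ together with $e_2\wedge F_{12}$ and $F_{13}\wedge F_{23}$ in $\mathfrak{so}(4)$, where the $F_{ij}$ are the normalised off-diagonal symmetric matrices. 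On this block the Gram matrix of the orbit metric looks as follows:
\begin{enumerate}
\item the diagonal is constant, and the entry between the two $\mathfrak{so}(4)$-vectors is zero;
\item the entry between $A_{12}$ and $F_{13}\wedge F_{23}$ is a nonzero constant, because $\mathrm{ad}(A_{12})$ also rotates $F_{13}$ into $F_{23}$;
\item the entry between $A_{12}$ and $e_2\wedge F_{12}$ is a nonconstant multiple of $\lambda_{12}(\dot\gamma(s))$.
\end{enumerate}
A direct computation shows that for every biinvariant $\hat Q$ on $\SO(3)\times\SO(4)$ one has $[\hat P_s,\hat P_{s'}]\neq 0$ whenever these multiples differ.

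Yet the theorem asserts $\tau^{\tan}=0$ in this case, and it is true for a reason your proposal never uses. By equivariance, $\tau^{\tan}$ at $\tilde\gamma(t)$ is fixed by the principal isotropy group $\Delta H$, so it lies in the image of $\mathfrak n^{\Delta H}$. Here this space is $0$, since every character of $H$ on $\mathfrak n$ is nontrivial. Alternatively, each isotypic block here is an abelian subalgebra, so the brackets vanish for an adapted basis without any diagonalisation. What your proposal is missing is an argument of this kind, together with an actual case-by-case check over the classification of which actions it covers.
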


Thus from Theorem\,\ref{thm1}, we obtain the following theorem:
\begin{Thm}
\label{so}
For $\ell\in\{5,6\}$, there are four harmonic self-maps of $\SO(\ell)$.
\end{Thm}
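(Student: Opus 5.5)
We combine Theorem\,\ref{thm1} with the two results quoted from \cite{MR4000241} for the lifted actions on $\SO(n+2)$. The plan is to realize $\SO(5)$ and $\SO(6)$ as $\SO(n+2)$ with $n\in\{3,4\}$, equipped with a lifted $(g',1,1)$-action. The normal part of the tension field of the maps $g\cdot\tilde\gamma(2t)\mapsto g\cdot\tilde\gamma(2r(t))$ is then governed by the $(2g',1,1,k)$-boundary value problem. Requiring $2g'\in\{2,3,4,6\}$, i.e. $2g'\in\{4,6\}$, brings Theorem\,\ref{thm1} into play.

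\textbf{Step 1 (matching dimensions).} By M\"unzner's identity $n=\tfrac{m_0+m_1}{2}g'=g'$ when $m_0=m_1=1$. Taking $g'=3$ gives a $(3,1,1)$-action on $\s^4$, which lifts to $\SO(5)$, with associated problem $(6,1,1,k)$. Taking $g'=4$ gives a $(4,1,1)$-action on $\s^5$, which is an instance of $(4,m_0,1)$ with $m_0=1$. It lifts to $\SO(6)$, with associated problem $(8,\dots)$. Since $8\notin\{2,3,4,6\}$, this choice would fail for $\SO(6)$.

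So I would recheck the indexing. The natural reading is that the ODE on $\SO(n+2)$ corresponds to the $(2g',m)$-problem, so we need $2g'\in\{2,4,6\}$, i.e. $g'\in\{1,2,3\}$ with $m=1$. Then $n=g'$ gives $\SO(3)$, $\SO(4)$ and $\SO(5)$. I would therefore expect $\ell\in\{4,5\}$ rather than $\{5,6\}$, unless the convention in \cite{MR4000241} is $n+1=\tfrac{m_0+m_1}{2}g$, in which case $n+2\in\{4,5,6\}$ and $\ell\in\{5,6\}$ corresponds to $g'\in\{2,3\}$ with $2g'\in\{4,6\}$. Reconciling this with the sphere case of Theorem\,\ref{thm11}, where $g\in\{2,3,4,6\}$ gives $\s^n$ with $n\in\{3,4,5,7\}$, i.e. $n=g+1$, I would adopt the convention $\dim$ sphere $=g+1$. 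Under it, $g'=2$ gives the action on $\s^3$ lifted to $\SO(4)$; I still need to pin down precisely which $\SO(\ell)$ arises for $g'\in\{2,3\}$ before committing.

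\textbf{Step 2 (tangential part).} For $m_0=m_1=1$ and $g'\in\{2,3\}$, the data $(g',1,1)$ are not among the exceptional $(4,2,2\ell+1)$, $(4,4,4\ell+3)$, $(4,4,5)$, $(4,6,9)$. Hence Theorem\,H of \cite{MR4000241} gives $\tau^{\tan}=0$ for every $(k,r)$-map.

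\textbf{Step 3 (normal part and conclusion).} By Theorem\,F of \cite{MR4000241}, $\tau^{\nor}=0$ if and only if $r$ solves the $(2g',1,1,k)$-problem. Theorem\,\ref{thm1} supplies solutions for four values of $k$, giving four harmonic self-maps of each $\SO(\ell)$. The boundary conditions ensure that each map extends smoothly; these are the same conditions as in the sphere case, since $r(0)=0$ and $r(\pi/g)=k\pi/g$.

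The main obstacle is Step 1: it requires a careful dimension and index bookkeeping to confirm that $\ell\in\{5,6\}$ is exactly what arises from $2g'\in\{4,6\}$. A second point is that the four maps are distinct only as far as Theorem\,\ref{thm1} guarantees, which is ``not necessarily distinct'' in $k$. I would argue distinctness from the distinct initial velocities $v_\pm$ in Lemmas\,\ref{wneg1} and \ref{wneg2}, where the maps correspond to different functions $r$.
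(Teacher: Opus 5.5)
\textbf{Gap: the case $\ell=6$ is never proved, and this route cannot prove it.} Your strategy is the paper's: it derives the statement in one line from Theorem~\ref{thm1} together with Theorems~F and~H of \cite{MR4000241}. The problem sits in your Step~1, which you leave undecided.

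Your first bookkeeping was correct. With $m_0=m_1=1$ one has $n=g'$, so the action lives on $\s^{g'+1}$ and lifts to $\SO(g'+2)$. The lifted problem is the $(2g',1,1)$-problem. Theorem~\ref{thm1} covers $2g'\in\{2,4,6\}$, which gives $\SO(3)$, $\SO(4)$ and $\SO(5)$. You should not drop $g'=1$, since $2g'=2$ is in the range of Theorem~\ref{thm1}.

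Your attempted ``reconciliation'' with Theorem~\ref{thm11} is not a second convention. There, $\s^n$ with $n=g+1$ is exactly $\s^{n+1}$ with $n=g$ in the notation of Theorem~F.

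To reach $\SO(6)$ you would need an action on $\s^5$ with $\tfrac{m_0+m_1}{2}g=4$. The only ones are $(1,4,4)$, $(2,1,3)$, $(2,2,2)$ and $(4,1,1)$. Their lifted problems are the $(2,4,4)$-, $(4,1,3)$-, $(4,2,2)$- and $(8,1,1)$-problems. None of these is a $(g,1,1)$-problem with $g\in\{2,3,4,6\}$, so Theorem~\ref{thm1} says nothing about them. Run with $g'\in\{2,3\}$, your Steps~2--3 therefore prove the four-maps claim for $\SO(4)$ and $\SO(5)$. The closing assertion ``four harmonic self-maps of each $\SO(\ell)$'' for $\ell=6$ does not follow.

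You could not have repaired this, because the paper's one-line derivation has the same defect. The index set $\{5,6\}$ appears to be shifted by one, presumably from conflating the $n$ of $\s^n$ in Theorem~\ref{thm11} with the $n$ of $\SO(n+2)$ in Theorem~F of \cite{MR4000241}. What the argument actually establishes is the statement for $\ell\in\{3,4,5\}$. You should have committed to that computation and flagged the discrepancy with the stated $\{5,6\}$, rather than leaving Step~1 open while still asserting the stated conclusion.

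Your distinctness argument is fine. The two symmetric families are centred at $(g+1)\tfrac{\pi}{2g}$ and $(1-2g)\tfrac{\pi}{2g}$, which differ by $\tfrac{3\pi}{2}\notin\pi\Z$. Within each family $v_-\neq v_+$. So the four functions $r$, and hence the four maps, are pairwise distinct even if the corresponding values of $k$ coincide.
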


\begin{Bem}
It is worth mentioning that the existence of (non-trivial) harmonic self-maps of $\SO(5), \SO(6)$ has been previously established, this follows e.g. from \cite{MR1436833} and the previously mentioned findings related to special orthogonal groups from \cite{MR4000241}.
However, for each $\ell\in\{5,6\}$, two of the harmonic self-maps of $\SO(\ell)$ are new, to the best of the authors' knowledge.
\end{Bem}

\section{Harmonic maps between non-compact cohomogeneity one manifolds}
\label{sec-nc}
We provide the existence of equivariant harmonic maps between non-compact cohomogeneity one manifolds.

\smallskip

We study non-compact cohomogeneity one manifolds of the form
\begin{align}
\label{mfd-nc}
 M=\mathbb{R}^{k_0+1}\times G_1/H_1\times\dots\times G_m/H_m,   
\end{align}
where $k_0\in\mathbb{N}$ with $k_0\geq 2$ and each $G_i/H_i$ is a compact, irreducible homogeneous space of dimension $k_i$ that is not flat.
Let
\begin{align*}
    G=\SO(k_0+1)\times G_1\times\dots\times G_m.
\end{align*}
The principal orbit is given by
\begin{align*}
 \s^{k_0}\times G_1/H_1\times\dots\times G_m/H_m,   
\end{align*}
the singular orbit is given by
\begin{align*}
 N=\{0\}\times G_1/H_1\times\dots\times G_m/H_m.   
\end{align*}
We assume that $M/G=[0,\infty)$.
For each $i\in\{1,\dots,m\}$, let $g_i$ be a $G_i$-invariant metric on $G_i/H_i$.
Each $G$-invariant smooth metric $g$ on $M\setminus{N}$ is of the form
\begin{align}
\label{metric}
 g=dt^2+f_0^2(t)g_{\s^{k_0}}+\sum_{j=1}^mf_j^2(t)g_i,  
\end{align}
where $f_0,f_1,\dots,f_m:(0,\infty)\rightarrow\mathbb{R}_+$ are smooth functions.
In order for the metric $g$ to extend smoothly to $N$, by \cite{MR1758585,MR4400726} we have to impose the following initial values
\begin{align*}
 f_0(0)=\dot f_i(0)=0, \dot f_0(0)=1,f_i(0)=a_i,     
\end{align*}
for $i\in\{1,\dots,m\}$,
where $a_i>0$.

\smallskip

Below let $\gamma:\mathbb{R}\rightarrow M$ be a fixed complete, unit speed normal geodesic of $M$.
We study equivariant maps of the form
\begin{align*}
  \psi:M\rightarrow M, g\cdot\gamma(t)\mapsto g\cdot\gamma(r(t)), 
\end{align*}
where $r:(0,\infty)\rightarrow\mathbb{R}$ is a smooth function with
$r(0)=0$.
Following Lemma 2.1 in \cite{MR2480860} one shows that these maps are smooth.

By (\ref{normalone}), (\ref{tanaltpart}) and (\ref{metric}) the construction of harmonic self-maps reduces (in the setting under consideration) to providing smooth solutions $r:[0,\infty)\rightarrow\mathbb{R}$ to the singular boundary value problem
\begin{align}
\label{bvpoa}
    \ddot r(t)+ (\sum_{i=0}^mk_i\frac{\dot f_i(t)}{f_i(t)})\dot r(t)-\sum_{i=0}^mk_i\frac{f_i(r(t))\dot f_i(r(t))}{f_i^2(t)}=0
\end{align}
with $r(0)=0$ and $\dot r(t)=v$, with $v\in\mathbb{R}$.
From (\cite{S26}) we obtain that, given $f_i, i\in\{0,\dots,m\}$, the function $r$ can be extended smoothly to $t=0$.  

\smallskip

The following considerations are inspired by the work \cite{MR987757} of Ratto.
Consider metrics of the form
\begin{align}
\label{metric-2}
 \tilde{g}=dt^2+e^{2\alpha_0(t)}f_0^2(t)g_{\s^{k_0}}+\sum_{j=1}^me^{2\alpha_j(t)}f_j^2(t)g_i, 
\end{align}
where $\alpha_i:[0,\infty)\rightarrow\mathbb{R}$ are smooth functions such that 
\begin{align}
\label{ali}
  \alpha_i(t)=0  
\end{align}
 for all $t\in[0,\epsilon], i\in\{0,\dots,m\}$ for some $\epsilon>0$ given. The condition on the functions $\alpha_i$ ensures that 
(\ref{metric-2}) coincides with (\ref{metric}) in a neighborhood of the singular orbit $N$.
Using Theorems 3.3 and 3.5 from (\cite{S26}), we obtain that the tension field of 
the map 
\begin{align}
\label{psi}
  \psi:(M,\tilde{g})\rightarrow (M,g),\, g\cdot\gamma(t)\mapsto g\cdot\gamma(r(t))
\end{align}
vanishes if and only if
\begin{align}
\label{bvp}
\ddot r(t)+ (\sum_{i=0}^mk_i\frac{\dot f_i(t)}{f_i(t)}+\sum_{i=0}^mk_i\dot\alpha_i(t))\dot r(t)-\sum_{i=0}^mk_i\frac{f_i(r(t))\dot f_i(r(t))}{f_i^2(t)}=0.
\end{align}
We will construct smooth solutions $r:[0,\infty)\rightarrow\mathbb{R}$ of (\ref{bvp}) with $r(0)=0$.
For any $v\in\mathbb{R}$ there exists a smooth solution $r:[0,\epsilon]\rightarrow\mathbb{R}$ of (\ref{bvp}) with $r(0)=0$ and $\dot r(0)=v$ which depends continuously on $v$, see \cite{S26}.
Below let $v\in\mathbb{R}\setminus\{0\}$ and $r$ be associated solution.
We can assume without loss of generality that $\dot r(\epsilon)\neq 0$. Indeed, if $\dot r(\epsilon)=0$, then the smoothness of $r$ and $\dot r(0)=v\neq 0$ imply that 
there exists a $\epsilon_0\in(0,\epsilon)$ such that $\dot r(\epsilon_0)\neq 0$. We can then adapt the assumption (\ref{ali}) accordingly.

\smallskip

Let $r$ be any smooth extention of $r:[0,\epsilon]\rightarrow\mathbb{R}$ such that $r_{\lvert [\epsilon,\infty)}$ is strictly monotone.
Thus, from (\ref{bvp}) we get
\begin{align*}
\sum_{i=0}^mk_i\dot\alpha_i(t)=-\frac{\ddot r(t)}{\dot r(t)}-\sum_{i=0}^mk_i\frac{\dot f_i(t)}{f_i(t)}-\sum_{i=0}^mk_i\frac{f_i(r(t))\dot f_i(r(t))}{f_i^2(t)\dot r(t)},
\end{align*}
and therefore we have
\begin{align}
\label{eq-a}
\sum_{i=0}^mk_i\alpha_i(t)=\int_{\epsilon}^t(-\frac{\ddot r(s)}{\dot r(s)}-\sum_{i=0}^mk_i\frac{\dot f_i(s)}{f_i(s)}-\sum_{i=0}^mk_i\frac{f_i(r(s))\dot f_i(r(s))}{f_i^2(s)\dot r(s)})ds.
\end{align}
Clearly, for each given smooth $r$ there exist infinitely many functions $\alpha_i$ which satisfy (\ref{ali}) and (\ref{eq-a}).
In particular, we have thus established the existence of a harmonic map
$(M,\tilde{g})\rightarrow (M,g)$. Note that we can choose $r$ 
such that the constructed harmonic map is not trivial, 
i.e. a map that is not the identity, or the negative identity map.
Further, note that the constructed harmonic map is onto since $r$ is chosen to be strictly monotone on the interval $[\epsilon,\infty)$.
In particular, the constructed harmonic map can not be a constant map.

\smallskip

We have thus established Theorem\,\ref{B}:

\begin{Thm}
\label{main2}
Let $(M,g)$ be a non-compact cohomogeneity one manifold as described in (\ref{mfd-nc}, \ref{metric}).
Let $v\in\mathbb{R}\setminus\{0\}$ be given. Then there exists an $\epsilon>0$ such that the smooth $r$ solution of (\ref{bvpoa}) is defined on $[0,\epsilon]$ and $\dot r(\epsilon)\neq 0$. For any strictly monotone, smooth extension of $r$ to $[\epsilon,\infty)$ there exist infinitely many metrics of the form (\ref{metric-2}) such that the map (\ref{psi}) is harmonic.
\end{Thm}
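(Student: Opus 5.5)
The plan is to reverse-engineer the conformal factors from a prescribed profile $r$, following Ratto's idea \cite{MR987757}. Fix $v\in\mathbb{R}\setminus\{0\}$. By the existence result for the singular initial value problem in \cite{S26}, there is a smooth solution $r$ of (\ref{bvpoa}) on some $[0,\epsilon_1]$ with $r(0)=0$ and $\dot r(0)=v$. Since $\dot r$ is continuous and $\dot r(0)=v\neq 0$, we have $\dot r\neq 0$ on some $[0,\epsilon]$ with $\epsilon\le\epsilon_1$. This gives the first assertion, and in fact $r$ is strictly monotone on $[0,\epsilon]$.

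Next, impose (\ref{ali}) on $[0,\epsilon]$. There (\ref{metric-2}) coincides with (\ref{metric}), so $\tilde g$ extends smoothly across $N$ by the smoothness conditions of \cite{MR1758585,MR4400726}. Also, (\ref{bvp}) reduces to (\ref{bvpoa}) on $[0,\epsilon]$, so $r$ solves (\ref{bvp}) there.

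Now take any strictly monotone smooth extension of $r$ to $[\epsilon,\infty)$; we need $\dot r\neq 0$ there so that we can divide by $\dot r$. Then (\ref{bvp}) is a first-order linear condition on the single function $A:=\sum_i k_i\alpha_i$, and it is solved explicitly by (\ref{eq-a}).

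The main obstacle is smoothness at $t=\epsilon$ together with (\ref{ali}). The right-hand side $F(t)$ of the expression for $\sum k_i\dot\alpha_i$ vanishes identically on $[0,\epsilon]$, because $r$ solves (\ref{bvpoa}) there. It is smooth wherever $\dot r\neq 0$, that is, on all of $[0,\infty)$. Hence $A(t)=\int_\epsilon^t F$ for $t\ge\epsilon$, and $A=0$ on $[0,\epsilon]$, defines a smooth function. Smoothness requires the extension to be genuinely smooth across $\epsilon$, which I would arrange by extending from a slightly smaller point, or simply by noting that the extension is understood as a smooth function on $[0,\infty)$. Replacing $\epsilon$ by a slightly smaller $\epsilon$, as the paper indicates, keeps $A$ vanishing near $0$.

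Finally, distribute $A$ among the $\alpha_i$. Choose $\alpha_1,\dots,\alpha_m$ arbitrarily among smooth functions vanishing on $[0,\epsilon]$, and set $\alpha_0:=(A-\sum_{i\ge1}k_i\alpha_i)/k_0$, which is possible since $k_0\ge2>0$. This produces infinitely many admissible tuples, hence infinitely many metrics (\ref{metric-2}). For each of them, (\ref{bvp}) holds on $[0,\infty)$. By the tension field formulas (Theorems \ref{normalone}, \ref{tanaltpart} and the conformal-change lemma) the normal component of $\tau$ vanishes, and the tangential component vanishes because the self-map acts by the identity homomorphism on the product. Therefore the map (\ref{psi}) is harmonic. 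It is smooth by Lemma 2.1 of \cite{MR2480860}, since $r$ is smooth and odd-compatible at $0$, as guaranteed by \cite{S26}. It is nonconstant and surjective onto orbits because $r$ is strictly monotone.
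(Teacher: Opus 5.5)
Your outline follows the paper's argument step for step. Your insistence on $\dot r\neq 0$ (rather than mere strict monotonicity) on $[\epsilon,\infty)$, and on smoothness of $\sum_i k_i\alpha_i$ at $t=\epsilon$, is a genuine improvement.

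\textbf{The gap.} It is the step where you take (\ref{bvp}) as the harmonicity condition for (\ref{psi}) and treat it as a linear first-order equation for $A=\sum_i k_i\alpha_i$. In Theorem~\ref{normalone} the vectors $E_\mu$ must be orthonormal for the \emph{domain} metric $\tilde g$. On the $i$-th factor they therefore carry a factor $e^{-\alpha_i(t)}$, and the potential term carries a factor $e^{-2\alpha_i(t)}$. Equivalently, the reduced energy is $\int\frac12\bigl(\dot r^2+\sum_i k_i e^{-2\alpha_i}f_i(r)^2/f_i^2\bigr)\prod_j (e^{\alpha_j}f_j)^{k_j}\,dt$, whose Euler--Lagrange equation is
\[
\ddot r+\Big(\sum_{i=0}^m k_i\frac{\dot f_i}{f_i}+\sum_{i=0}^m k_i\dot\alpha_i\Big)\dot r-\sum_{i=0}^m k_i\,e^{-2\alpha_i(t)}\frac{f_i(r)\dot f_i(r)}{f_i^2(t)}=0 .
\]
The difference is not cosmetic. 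Take $f_0(t)=t$, $r(t)=t$ and a constant $\alpha_0=\log c$. Then (\ref{bvp}) is satisfied, yet the identity from the cone $dt^2+c^2t^2g_{\s^{k_0}}$ to flat $\R^{k_0+1}$ is not harmonic for $c\neq 1$, since $\Delta_{\tilde g}(t\,\omega_j)=k_0(1-c^{-2})\,\omega_j/t$. The paper's (\ref{bvp}) and the conformal-change lemma in Subsection~\ref{sub-har} omit the same factor. So your route is faithful to the paper, but (\ref{eq-a}) does not produce harmonic maps: the $\alpha_i$ enter individually and nonlinearly, not only through $A$.

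\textbf{A repair along your lines.} Fix $\alpha_1,\dots,\alpha_m$ freely (vanishing on $[0,\epsilon]$) and set $u=e^{2\alpha_0}$. The correct equation then becomes the linear ODE
\[
k_0\dot r\,\dot u=2u\Big(-\ddot r-\dot r\sum_{i=0}^m k_i\frac{\dot f_i}{f_i}-\dot r\sum_{i=1}^m k_i\dot\alpha_i+\sum_{i=1}^m k_i e^{-2\alpha_i}\frac{f_i(r)\dot f_i(r)}{f_i^2}\Big)+2k_0\frac{f_0(r)\dot f_0(r)}{f_0^2}.
\]
Since $r$ solves (\ref{bvpoa}) on $(0,\epsilon]$, this ODE has $u\equiv 1$ there, and hence a unique smooth global continuation.

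What is then missing is $u>0$ on $[\epsilon,\infty)$, which is needed to define $\alpha_0=\frac12\log u$. This is automatic when $f_0(r)\dot f_0(r)/\dot r\ge 0$ along the extension (e.g.\ $v>0$ and $f_0$ nondecreasing). For general $f_i$ and an arbitrary monotone extension, it needs a separate argument or a restriction on the admissible extensions. As it stands, the ``for any extension'' claim is not established.

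\textbf{A smaller point.} $\tau^{\tan}=0$ does hold, but the reason is that $\tilde g$ and $g$ restrict on each factor to multiples of the same metric. Hence the $E_\mu$ in Theorem~\ref{tanaltpart} can be taken to be eigenvectors of $\hat P_{r(t)}$. The identity homomorphism alone would not suffice.
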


\begin{Bem}
It is worth mentioning that Theorem\,\ref{main2} can be readily extended to smooth equivariant maps between non-compact manifolds that may differ from one another; for the sake of simplicity in presentation, we however restricted ourselves to the case at hand.
\end{Bem}

\bibliographystyle{plain}
\bibliography{mybib}

@article {MR987757,
    AUTHOR = {Ratto, Andrea},
     TITLE = {Harmonic maps from deformed spheres to spheres},
   JOURNAL = {Amer. J. Math.},
  FJOURNAL = {American Journal of Mathematics},
    VOLUME = {111},
      YEAR = {1989},
    NUMBER = {2},
     PAGES = {225--238},
      ISSN = {0002-9327,1080-6377},
   MRCLASS = {58E20},
  MRNUMBER = {987757},
MRREVIEWER = {Donato\ Scolozzi},
       DOI = {10.2307/2374509},
       URL = {https://doi.org/10.2307/2374509},
}

@article {MR2406265,
    AUTHOR = {Grove, Karsten and Wilking, Burkhard and Ziller, Wolfgang},
     TITLE = {Positively curved cohomogeneity one manifolds and 3-{S}asakian
              geometry},
   JOURNAL = {J. Differential Geom.},
  FJOURNAL = {Journal of Differential Geometry},
    VOLUME = {78},
      YEAR = {2008},
    NUMBER = {1},
     PAGES = {33--111},
      ISSN = {0022-040X,1945-743X},
   MRCLASS = {53C21 (53C26 57S25)},
  MRNUMBER = {2406265},
MRREVIEWER = {Andrew\ Swann},
       URL = {http://projecteuclid.org/euclid.jdg/1197320603},
}

@incollection {MR334094,
    AUTHOR = {Takagi, Ryoichi and Takahashi, Tsunero},
     TITLE = {On the principal curvatures of homogeneous hypersurfaces in a
              sphere},
 BOOKTITLE = {Differential geometry (in honor of {K}entaro {Y}ano)},
     PAGES = {469--481},
 PUBLISHER = {Kinokuniya Book Store, Tokyo},
      YEAR = {1972},
   MRCLASS = {53C35},
  MRNUMBER = {334094},
MRREVIEWER = {\`E.\ Vinberg},
}

@article {MR583825,
    AUTHOR = {M\"unzner, Hans Friedrich},
     TITLE = {Isoparametrische {H}yperfl\"achen in {S}ph\"aren},
   JOURNAL = {Math. Ann.},
  FJOURNAL = {Mathematische Annalen},
    VOLUME = {251},
      YEAR = {1980},
    NUMBER = {1},
     PAGES = {57--71},
      ISSN = {0025-5831,1432-1807},
   MRCLASS = {53C40},
  MRNUMBER = {583825},
MRREVIEWER = {Th.\ Friedrich},
       DOI = {10.1007/BF01420281},
       URL = {https://doi.org/10.1007/BF01420281},
}

@article {MR298593,
    AUTHOR = {Hsiang, Wu-yi and Lawson, Jr., H. Blaine},
     TITLE = {Minimal submanifolds of low cohomogeneity},
   JOURNAL = {J. Differential Geometry},
  FJOURNAL = {Journal of Differential Geometry},
    VOLUME = {5},
      YEAR = {1971},
     PAGES = {1--38},
      ISSN = {0022-040X,1945-743X},
   MRCLASS = {53C40},
  MRNUMBER = {298593},
MRREVIEWER = {Bang-yen\ Chen},
       URL = {http://projecteuclid.org/euclid.jdg/1214429775},
}

@article {MR653945,
    AUTHOR = {Eells, J. and Lemaire, L.},
     TITLE = {Deformations of metrics and associated harmonic maps},
   JOURNAL = {Proc. Indian Acad. Sci. Math. Sci.},
  FJOURNAL = {Indian Academy of Sciences. Proceedings. Mathematical
              Sciences},
    VOLUME = {90},
      YEAR = {1981},
    NUMBER = {1},
     PAGES = {33--45},
      ISSN = {0253-4142,0973-7685},
   MRCLASS = {58E20},
  MRNUMBER = {653945},
       DOI = {10.1007/BF02867016},
       URL = {https://doi.org/10.1007/BF02867016},
}

@article {MR1122903,
    AUTHOR = {Eells, J. and Ferreira, M. J.},
     TITLE = {On representing homotopy classes by harmonic maps},
   JOURNAL = {Bull. London Math. Soc.},
  FJOURNAL = {The Bulletin of the London Mathematical Society},
    VOLUME = {23},
      YEAR = {1991},
    NUMBER = {2},
     PAGES = {160--162},
      ISSN = {0024-6093,1469-2120},
   MRCLASS = {58E20},
  MRNUMBER = {1122903},
MRREVIEWER = {A.\ R.\ Aithal},
       DOI = {10.1112/blms/23.2.160},
       URL = {https://doi.org/10.1112/blms/23.2.160},
}

@book {MR716320,
    AUTHOR = {Baird, Paul},
     TITLE = {Harmonic maps with symmetry, harmonic morphisms and
              deformations of metrics},
    SERIES = {Research Notes in Mathematics},
    VOLUME = {87},
 PUBLISHER = {Pitman (Advanced Publishing Program), Boston, MA},
      YEAR = {1983},
     PAGES = {v+181},
      ISBN = {0-273-08603-0},
   MRCLASS = {58E20 (53C20)},
  MRNUMBER = {716320},
MRREVIEWER = {Samuel\ I.\ Goldberg},
}

@misc{S26,
     title={The Initial Value Problem
for Harmonic maps of Cohomogeneity One manifolds}, 
      author={Anna Siffert},
      year={2026},
      eprint={2601.15022},
      archivePrefix={arXiv},
      primaryClass={math.DG},
      url={https://arxiv.org/abs/2601.15022}, 
}

@article {MR1214054,
    AUTHOR = {Urakawa, Hajime},
     TITLE = {Equivariant harmonic maps between compact {R}iemannian
              manifolds of cohomogeneity {$1$}},
   JOURNAL = {Michigan Math. J.},
  FJOURNAL = {Michigan Mathematical Journal},
    VOLUME = {40},
      YEAR = {1993},
    NUMBER = {1},
     PAGES = {27--51},
      ISSN = {0026-2285,1945-2365},
   MRCLASS = {58E20 (53C30)},
  MRNUMBER = {1214054},
MRREVIEWER = {Martin\ A.\ Guest},
       DOI = {10.1307/mmj/1029004673},
       URL = {https://doi.org/10.1307/mmj/1029004673},
}

@article {MR1436833,
    AUTHOR = {Bizo\'n, Piotr and Chmaj, Tadeusz},
     TITLE = {Harmonic maps between spheres},
   JOURNAL = {Proc. Roy. Soc. London Ser. A},
  FJOURNAL = {Proceedings of the Royal Society. London. Series A.
              Mathematical, Physical and Engineering Sciences},
    VOLUME = {453},
      YEAR = {1997},
    NUMBER = {1957},
     PAGES = {403--415},
      ISSN = {0962-8444,2053-9169},
   MRCLASS = {58E20},
  MRNUMBER = {1436833},
MRREVIEWER = {Anne-Jo\"elle\ Vanderwinden},
       DOI = {10.1098/rspa.1997.0023},
       URL = {https://doi.org/10.1098/rspa.1997.0023},
}

@incollection {MR2389639,
    AUTHOR = {H\'elein, Fr\'ed\'eric and Wood, John C.},
     TITLE = {Harmonic maps},
 BOOKTITLE = {Handbook of global analysis},
     PAGES = {417--491, 1213},
 PUBLISHER = {Elsevier Sci. B. V., Amsterdam},
      YEAR = {2008},
      ISBN = {978-0-444-52833-9},
   MRCLASS = {58E20 (53C28 53C43 58-02)},
  MRNUMBER = {2389639},
MRREVIEWER = {Andreas\ Gastel},
       DOI = {10.1016/B978-044452833-9.50009-7},
       URL = {https://doi.org/10.1016/B978-044452833-9.50009-7},
}

@misc{BS1,
      title={Stable proper biharmonic maps in Euclidean spheres}, 
      author={Volker Branding and Anna Siffert},
      year={2025},
      eprint={2507.06708},
      archivePrefix={arXiv},
      primaryClass={math.DG},
      url={https://arxiv.org/abs/2507.06708}, 
   JOURNAL = {to appear in Annali della Scuola Normale Superiore di Pisa,
Classe di Scienze},
}

@article {MR4927650,
    AUTHOR = {Branding, Volker and Siffert, Anna},
     TITLE = {Infinite families of harmonic self-maps of ellipsoids in all
              dimensions},
   JOURNAL = {Nonlinear Anal.},
  FJOURNAL = {Nonlinear Analysis. Theory, Methods \& Applications. An
              International Multidisciplinary Journal},
    VOLUME = {261},
      YEAR = {2025},
     PAGES = {Paper No. 113874, 11},
      ISSN = {0362-546X,1873-5215},
   MRCLASS = {58E20 (53C43)},
  MRNUMBER = {4927650},
       DOI = {10.1016/j.na.2025.113874},
       URL = {https://doi.org/10.1016/j.na.2025.113874},
}

@misc{VZ1,
      title={Initial value problems on cohomogeneity one manifolds, I}, 
      author={Luigi Verdiani and Wolfgang Ziller},
      year={2024},
      eprint={2412.06058},
      archivePrefix={arXiv},
      primaryClass={math.DG},
      url={https://arxiv.org/abs/2412.06058}, 
}

@article {MR3427685,
    AUTHOR = {Siffert, Anna},
     TITLE = {Infinite families of harmonic self-maps of spheres},
   JOURNAL = {J. Differential Equations},
  FJOURNAL = {Journal of Differential Equations},
    VOLUME = {260},
      YEAR = {2016},
    NUMBER = {3},
     PAGES = {2898--2925},
      ISSN = {0022-0396,1090-2732},
   MRCLASS = {58E20 (34B15 55M25)},
  MRNUMBER = {3427685},
MRREVIEWER = {Giandomenico\ Orlandi},
       DOI = {10.1016/j.jde.2015.10.023},
       URL = {https://doi.org/10.1016/j.jde.2015.10.023},
}

@article {MR3745872,
    AUTHOR = {Siffert, Anna},
     TITLE = {Harmonic self-maps of {${\rm SU}(3)$}},
   JOURNAL = {J. Geom. Anal.},
  FJOURNAL = {Journal of Geometric Analysis},
    VOLUME = {28},
      YEAR = {2018},
    NUMBER = {1},
     PAGES = {587--605},
      ISSN = {1050-6926,1559-002X},
   MRCLASS = {58E20 (34B15 55M25)},
  MRNUMBER = {3745872},
MRREVIEWER = {Song-il\ Ri},
       DOI = {10.1007/s12220-017-9833-0},
       URL = {https://doi.org/10.1007/s12220-017-9833-0},
}

@article {MR2480860,
    AUTHOR = {P\"{u}ttmann, Thomas},
     TITLE = {Cohomogeneity one manifolds and self-maps of nontrivial
              degree},
   JOURNAL = {Transform. Groups},
  FJOURNAL = {Transformation Groups},
    VOLUME = {14},
      YEAR = {2009},
    NUMBER = {1},
     PAGES = {225--247},
      ISSN = {1083-4362},
   MRCLASS = {57S15 (55M25)},
  MRNUMBER = {2480860},
MRREVIEWER = {Sergio Console},
       DOI = {10.1007/s00031-008-9037-6},
       URL = {https://doi.org/10.1007/s00031-008-9037-6},
}

@article {MR1923478,
    AUTHOR = {Grove, Karsten and Ziller, Wolfgang},
     TITLE = {Cohomogeneity one manifolds with positive {R}icci curvature},
   JOURNAL = {Invent. Math.},
  FJOURNAL = {Inventiones Mathematicae},
    VOLUME = {149},
      YEAR = {2002},
    NUMBER = {3},
     PAGES = {619--646},
      ISSN = {0020-9910,1432-1297},
   MRCLASS = {53C21},
  MRNUMBER = {1923478},
MRREVIEWER = {Burkhard\ Wilking},
       DOI = {10.1007/s002220200225},
       URL = {https://doi.org/10.1007/s002220200225},
}

@article {MR4400726,
    AUTHOR = {Verdiani, L. and Ziller, W.},
     TITLE = {Smoothness conditions in cohomogeneity one manifolds},
   JOURNAL = {Transform. Groups},
  FJOURNAL = {Transformation Groups},
    VOLUME = {27},
      YEAR = {2022},
    NUMBER = {1},
     PAGES = {311--342},
      ISSN = {1083-4362,1531-586X},
   MRCLASS = {53C30},
  MRNUMBER = {4400726},
MRREVIEWER = {Reza\ Mirzaie},
       DOI = {10.1007/s00031-020-09618-9},
       URL = {https://doi.org/10.1007/s00031-020-09618-9},
}

@incollection {MR1155662,
    AUTHOR = {Alekseevski\u i, A. V. and Alekseevski\u i, D. V.},
     TITLE = {{$G$}-manifolds with one-dimensional orbit space},
 BOOKTITLE = {Lie groups, their discrete subgroups, and invariant theory},
    SERIES = {Adv. Soviet Math.},
    VOLUME = {8},
     PAGES = {1--31},
 PUBLISHER = {Amer. Math. Soc., Providence, RI},
      YEAR = {1992},
      ISBN = {0-8218-4107-6},
   MRCLASS = {57S15},
  MRNUMBER = {1155662},
MRREVIEWER = {Yoshinobu\ Kamishima},
       DOI = {10.1007/bf01084048},
       URL = {https://doi.org/10.1007/bf01084048},
}

@book {MR3362465,
    AUTHOR = {Alexandrino, Marcos M. and Bettiol, Renato G.},
     TITLE = {Lie groups and geometric aspects of isometric actions},
 PUBLISHER = {Springer, Cham},
      YEAR = {2015},
     PAGES = {x+213},
      ISBN = {978-3-319-16612-4; 978-3-319-16613-1},
   MRCLASS = {22-01 (22E46 22F05 53-01)},
  MRNUMBER = {3362465},
MRREVIEWER = {Vladimir\ V.\ Gorbatsevich},
       DOI = {10.1007/978-3-319-16613-1},
       URL = {https://doi.org/10.1007/978-3-319-16613-1},
}

@article {MR1758585,
    AUTHOR = {Eschenburg, J.-H. and Wang, McKenzie Y.},
     TITLE = {The initial value problem for cohomogeneity one {E}instein
              metrics},
   JOURNAL = {J. Geom. Anal.},
  FJOURNAL = {The Journal of Geometric Analysis},
    VOLUME = {10},
      YEAR = {2000},
    NUMBER = {1},
     PAGES = {109--137},
      ISSN = {1050-6926,1559-002X},
   MRCLASS = {53C25 (34A12 34C60 34E05 53C30)},
  MRNUMBER = {1758585},
MRREVIEWER = {Megan\ M.\ Kerr},
       DOI = {10.1007/BF02921808},
       URL = {https://doi.org/10.1007/BF02921808},
}

@article {MR85460,
    AUTHOR = {Mostert, Paul S.},
     TITLE = {On a compact {L}ie group acting on a manifold},
   JOURNAL = {Ann. of Math. (2)},
  FJOURNAL = {Annals of Mathematics. Second Series},
    VOLUME = {65},
      YEAR = {1957},
     PAGES = {447--455},
      ISSN = {0003-486X},
   MRCLASS = {17.0X},
  MRNUMBER = {85460},
MRREVIEWER = {A.\ Shields},
       DOI = {10.2307/1970056},
       URL = {https://doi.org/10.2307/1970056},
}

@article {MR95897,
    AUTHOR = {Mostert, Paul S.},
     TITLE = {Errata, ``{O}n a compact {L}ie group acting on a manifold''},
   JOURNAL = {Ann. of Math. (2)},
  FJOURNAL = {Annals of Mathematics. Second Series},
    VOLUME = {66},
      YEAR = {1957},
     PAGES = {589},
      ISSN = {0003-486X},
   MRCLASS = {22.00},
  MRNUMBER = {95897},
MRREVIEWER = {A.\ Shields},
       DOI = {10.2307/1969911},
       URL = {https://doi.org/10.2307/1969911},
}

@article {MR4000241,
    AUTHOR = {P\"{u}ttmann, Thomas and Siffert, Anna},
     TITLE = {Harmonic self-maps of cohomogeneity one manifolds},
   JOURNAL = {Math. Ann.},
  FJOURNAL = {Mathematische Annalen},
    VOLUME = {375},
      YEAR = {2019},
    NUMBER = {1-2},
     PAGES = {247--282},
      ISSN = {0025-5831},
   MRCLASS = {58E20 (34B15 55M25 57S15)},
  MRNUMBER = {4000241},
MRREVIEWER = {Andreas Gastel},
       DOI = {10.1007/s00208-019-01848-x},
       URL = {https://doi.org/10.1007/s00208-019-01848-x},
}

@book {MR2044031,
    AUTHOR = {Baird, Paul and Wood, John C.},
     TITLE = {Harmonic morphisms between {R}iemannian manifolds},
    SERIES = {London Mathematical Society Monographs. New Series},
    VOLUME = {29},
 PUBLISHER = {The Clarendon Press, Oxford University Press, Oxford},
      YEAR = {2003},
     PAGES = {xvi+520},
      ISBN = {0-19-850362-8},
   MRCLASS = {53C43 (53C21 58E20)},
  MRNUMBER = {2044031},
MRREVIEWER = {Eric Loubeau},
       DOI = {10.1093/acprof:oso/9780198503620.001.0001},
       URL = {https://doi.org/10.1093/acprof:oso/9780198503620.001.0001},
}

\end{document}